\documentclass[11pt]{article}
\usepackage[utf8]{inputenc}

\usepackage{amsthm, amsfonts, amssymb, amsmath,enumitem, natbib, bbm, mathabx}
\usepackage{cases, amsthm}
\usepackage{fullpage}
\usepackage{wasysym} 
\usepackage{fancyhdr}
\usepackage{graphicx}
\usepackage{bbm}
\usepackage{caption}
\usepackage{subcaption}
\usepackage{bm}
\usepackage{authblk}
 \usepackage{setspace}
\usepackage[colorlinks=true, pdfstartvie w=FitV,
linkcolor=blue, citecolor=blue, urlcolor=blue]{hyperref}
\usepackage{comment}
\usepackage{mathtools}
\usepackage{amssymb}

\newtheorem{definition}{Definition}
\newtheorem{theorem}{Theorem}

\newtheorem{lemma}{Lemma}

\newtheorem{assumption}{Assumption}
\newtheorem{result}{Result}

\newtheorem{remark}{Remark}

\usepackage[linesnumbered,ruled,vlined]{algorithm2e}

\usepackage{mathrsfs}

%%%%%%%%%%%%%%%%%%%%%%%%%%%%%%%%%%%%%%%
%%%%%%%%%%%%%%%%%%%%%%%%%%%%%%%%%%%%%%% added by sid
\usepackage{xcolor}
\usepackage{cleveref}
 %%%% indicator function
%\DeclareMathOperator{\T}{T}
%%%%%% commands for probabilities

% \newcommand{\var}{\mathsf{Var}}
% \newcommand{\cov}{\mathsf{Cov}}

% \renewcommand{\algorithmicrequire}{\textbf{Input:}}
% \renewcommand{\algorithmicensure}{\textbf{Output:}}
%%%%%%%

% \DeclareMathOperator{\lito}{o}
% \DeclareMathOperator{\bigo}{O}

%\DeclareMathOperator{\diam}{\mathrm{diam}}

  %%%FOR FRACTION

% \newcommand{\eps}{\epsilon}

% \newcommand{\eq}[1]{\begin{equation#1}}
% \newcommand{\eeq}[1]{\end{equation#1}}
% \newcommand{\eqa}[1]{\begin{eqnarray#1}}
% \newcommand{\eeqa}[1]{\end{eqnarray#1}}

  %%%FOR FRACTION

%%%%%
%%%%%%%

% \usepackage{autonum}
% 873.95 + 103.19 + 43.53 + 54.88 + 63.45 = 1139
% \usepackage{mathtools}
% % \mathtoolsset{showonlyrefs}
% \newcommand{\floor}[1]{\left\lfloor #1 \right\rfloor}
% \newcommand{\ceil}[1]{\left\lceil #1 \right\rceil}
%%%%%%%% commenting section

\usepackage[top=1in, bottom=1in, left=1in, right=1in]{geometry}
\title{Finite sample valid confidence sets of mode}

\author[1]{Manit Paul}
\author[2]{Arun Kumar Kuchibhotla}
\affil[1]{Department of Statistics \& Data Science, University of Pennsylvania}
\affil[2]{Department of Statistics \& Data Science, Carnegie Mellon University}
\date{}

\begin{document}

\maketitle

\begin{abstract}
Estimating the mode of a unimodal distribution is a classical problem in statistics. Although there are several approaches for point-estimation of mode in the
literature, very little has been explored about the interval-estimation of mode. Our work proposes a collection of novel methods of obtaining finite sample valid confidence set of the mode of a unimodal distribution. We analyze the behaviour of
the width of the proposed confidence sets under some regularity assumptions of
the density about the mode and show that the width of these confidence sets shrink to zero near optimally. Simply put, we show that it is possible to build finite sample valid confidence sets for the mode that shrink to a singleton as sample size increases. We support the theoretical results by showing the performance of the proposed methods on some synthetic data-sets. We believe that our confidence sets can be improved both in construction and in terms of rate.
\begin{comment}
    The construction of confidence intervals for the median based on independent and identically distributed (i.i.d.) observations is an age-old problem in statistics for which there are numerous solutions available in the literature. Arguably one reason for the popularity of this problem is that one can construct distribution-free finite sample valid confidence intervals for the median using order statistics, i.e., with no assumptions on the underlying distribution whatsoever. This, however, only applies to coverage validity. The other side of this problem is the width. To our knowledge, the width of the distribution-free confidence interval is studied only under standard assumptions that require the existence of a Lebesgue density that is bounded away from zero at the median. In this paper, we advance the literature by studying the width of the distribution-free confidence interval under non-standard assumptions. A striking conclusion is that the width of the confidence interval when scaled appropriately converges in distribution to a non-degenerate random variable, which is unlike regular intervals where the limit is degenerate. Finally, we consider the application of the results to the construction of confidence intervals for general parameters/functionals improving the existing procedure HulC. 
\end{comment}
\end{abstract}

\section{Introduction and Motivation}
\label{sec:introduction}
Mode estimation is a very well-studied problem in Statistics. Particularly for unimodal distributions, mode is a fundamental functional. A unimodal univariate distribution function $F$ is said to have a mode at $\theta_0$ if $F(\cdot)$ is a convex function in the interval $\left(-\infty, \theta_0 \right]$ and a concave function in the interval $\left[ \theta_0, \infty \right)$. This definition automatically implies that $F$ is absolutely continuous everywhere, except possibly a mass at $\theta_0$. In many results of this work we shall make the slightly stronger assumption that $F$ is an absolutely continuous distribution function. Unimodality of $F$ then implies the existence of a density $f$ which is non-decreasing on $(-\infty, \theta_0]$ and non-increasing on $(\theta_0, \infty]$. In other words, 
\begin{equation}
\label{eq:mode_def}
\theta_0 \in \mbox{arg max}_xf(x). 
\end{equation}
One of the earliest works of mode estimation is due to \cite{parzen1962estimation}. This paper adopted a two-stage process for estimating mode: obtain a kernel density estimate of the underlying density function and then obtain the plug-in estimate by finding the maximiser of the density estimate. This approach of estimating the mode using a non-parametric density estimate was studied in several subsequent works, the most prominent being \cite{chernoff1964estimation}, \cite{romano1988weak}. In the multivariate setting, prominent works that employ the kernel density approach are \cite{konakov1973asymptotic}, \cite{samanta1973nonparametric}, \cite{mokkadem2003law}, \cite{dasgupta2014optimal}. Along the similar lines, \cite{venter1967estimation} use the spacings to get a consistent estimate of mode: given data $X_1, \cdots X_n \stackrel{iid}{\sim} F$ where $F$ is a unimodal univariate distribution, the mode $\theta_0$ is estimated by the order statistic $\widehat \theta_{(K_n)}$ where $K_n = \mbox{arg min}_{j} \{X_{(j + r_n)} - X_{(j- r_n)}: r_n + 1 \leq j \leq n- r_n\}$. There is a large class of papers which use such spacings-based density estimate for obtaining an estimate of the mode, see \cite{dalenius1965mode}, \cite{grenander1965some}, \cite{sager1975consistency}, \cite{hall1982asymptotic}. 

A separate direction of work estimates the mode by using the fact that the mode of a unimodal distribution always lies in a high probability region (i.e. the smallest set $\mathcal{S}$ such that $\mathbb{P}(X \in \mathcal{S}) \geq \tau$ for some $0 < \tau < 1$). \cite{wegman1971note}, a notable work in this domain, estimates the mode by identifying a high probability region, which is found by determining the interval of length $a_n$ (some sequence) containing the highest number of observations. A modified version of this method is considered in \cite{robertson1974iterative}, where for a given function $k(n)$, the smallest interval contaning $k(n)$ many observations is selected. Thereafter the smallest sub-interval of the selected interval is chosen containing $k(k(n))$ many observations and this process is iterated multiple times. A generalization of this process for the multivariate setting is discussed in \cite{sager1979iterative}. A similar idea is used to construct an estimator of mode by \cite{bickel2002robust}, \cite{bickel2006fast} in the univariate case, and by \cite{arias2022estimation} in the multivariate case. For the interested readers, we refer to \cite{chacon2020modal} for a brief review of the methods of mode-estimation, and to \cite{dharmadhikari1988unimodality} for a treatise on unimodality. 

Despite extensive research on point estimation of the mode, interval estimation of mode has received much less attention. Some of the papers cited above derive the asymptotic distribution of the estimator of mode under some regularity assumptions on the behavior of the density about the mode, which can be used to obtain asymptotically valid confidence interval of the mode. However these asymptotically valid confidence intervals often have miserable performance for moderate sample sizes because of the slow rate of convergence of the estimator of mode. Thus developing finite sample valid confidence sets of the mode is of utmost importance. One of the most remarkable works on the interval estimation of mode is by \cite{edelman1990confidence}. If $X \sim F$, a unimodal univariate absolutely continuous distribution function with mode $\theta_0$, then for $0< \alpha < 1$ and any $a \in \mathbb{R}$, \cite{edelman1990confidence} states that, 
\begin{equation}
    \label{eq:edelman}
    \mathbb{P}\left(X - \left(\frac{2}{\alpha} - 1 \right)| X - a| \leq \theta_0 \leq X + \left(\frac{2}{\alpha} + 1 \right) | X - a|  \right) \geq 1 - \alpha. 
\end{equation}
This means than unlike other well known functionals like median, it is possible to obtain a valid confidence interval of the mode of a unimodal distribution using just a single observation. Under the same setting, \cite{andvel1991confidence} discusses an analogous method of constructing confidence interval of mode based on a single observation. Construction of confidence interval based on a single observation has also been studied in \cite{krauth1992confidence} for the case of discrete distributions. Apart from these works, \cite{lanke1974interval} has proposed the following valid confidence interval of mode based on $n$ iid observations $X_1, \cdots, X_n $ from a unimodal univariate distribution with mode $\theta_0$, 
\begin{equation}
    \label{eq:lanke}
    \mathbb{P}\left(X_{(1)} - \lambda_{n,\alpha}(X_{(n)} - X_{(1)}) \leq \theta_0 \leq X_{(n)} + \lambda_{n,\alpha}(X_{(n)} - X_{(1)}) \right) \geq 1- \alpha,
\end{equation}
for $\lambda_{n,\alpha} = \alpha^{-1/(n-1)} - 1$. This implies that if the underlying distribution has unbounded support then the width of the confidence interval proposed in \cite{lanke1974interval} diverges to infinite as the sample size increases. To our knowledge, there are no prior works that have proposed finite sample valid confidence intervals of the mode, with width shrinking to zero with increasing sample size. Our work advances this field by introducing methods for obtaining finite sample valid confidence sets of the mode of a unimodal distribution and proving that their widths diminish to zero at a ``near-optimal" rate.

\subsection{Contributions and Organization}
Given that finite-sample valid confidence intervals for the mode of a unimodal distribution can be constructed (\cite{lanke1974interval}, \cite{edelman1990confidence}), we do not explore asymptotically valid confidence interval methods like bootstrap or subsampling in this paper. The main contributions of this paper are as follows:
\begin{enumerate}
    \item Given a data-set of $n$ iid observations from a unimodal univariate distribution, we propose novel methods of constructing finite sample valid confidence set of the mode of the distribution using (i) spacings between the order statistics, and (ii) M-estimation procedure. Both of the proposed methods are adaptive to the behavior of the density about the mode i.e.\ under suitable assumptions (refer \cite{arias2022estimation}) these methods yield confidence sets whose width shrink to zero at the minimax optimal rate (upto a logarithmic factor) without any prior knowledge of the regularity of the density about the mode. 
    \item We extend the results of \cite{edelman1990confidence}, \cite{andvel1991confidence} on constructing a valid confidence interval of mode of a unimodal univariate distribution using a single observation with the aid of \eqref{eq:edelman} to provide an algorithm for constructing a finite sample valid confidence set of the mode using a sample of $n$ iid data-points from the unimodal distribution. We also refer the readers to \Cref{app:auxi} to see another variant of the algorithm (using \eqref{eq:edelman}) which can provide finite sample valid confidence set of the mode even when the data-points are not independent. 
    \item We propose and discuss a general method of constructing a finite sample valid confidence set of the mode of a $d$-dimensional $\gamma$-unimodal distribution using any algorithm that provides finite sample valid confidence sets of the mode of a unimodal univariate distribution. 
\end{enumerate}

\paragraph{{\bf Organization}} 
In \Cref{sec:methods} we introduce different methods of constructing finite sample valid confidence sets of the mode of a unimodal distribution and study the behavior of the width of these confidence sets with increasing sample size. In \Cref{sec:simulation} we investigate the performance of the methods developed in this work on simulated data. The proofs of all the main results along with auxiliary results are presented in the appendix. 

%The usual Wald confidence interval of median (\eqref{eq:as_norm_cn}) does not ensure the desired coverage for all sample sizes. Moreover, it requires the differentiability of the distribution function at the median and the boundedness of the derivative of the distribution function at the median away from zero. The same assumptions are required for the classical bootstrap based confidence interval.

\section{Methods}
\label{sec:methods}  
Suppose $X_1, \cdots, X_n \stackrel{iid}{\sim} F$ where $F$ is a unimodal univariate distribution function whose mode is $\theta_0$. A confidence set $\widehat{\mathrm{CS}}_{n,\alpha}$ is said to be a finite sample valid confidence set of $\theta_0$ at level $1 - \alpha$ (for $0 < \alpha <1 $) if, 
\[
\mathbb{P}_F(\theta_0 \in \widehat{\mathrm{CS}}_{n,\alpha}) \geq 1 - \alpha \quad \mbox{for all } n \in \mathbb{N}. 
\]
In this section we shall introduce and discuss three distinct approaches of constructing finite sample valid confidence set of mode: (1) Using the spacings between the order statistics, (2) Using M-estimation, (3) Using \cite{edelman1990confidence}'s result \eqref{eq:edelman}. We shall prove the finite sample validity and derive the rate of the width of the confidence sets computed through each of these methods.
\subsection{A nested approach of obtaining confidence interval of mode using order statistics}
\label{subsec:os_approach}
In this sub-section we discuss how we can use the spacings between order statistics to construct finite sample valid confidence set of the mode. In \Cref{alg:proposed-conf-int_os} we describe the algorithm for computing a finite sample confidence set of $\theta_0$. 
\begin{theorem}
\label{thm:main_result_os_validity}
    Suppose $X_1, X_2, \ldots, X_n$ are independent and identically distributed as $F$ which is a unimodal distribution with mode $\theta_0$ i.e.\ $F(\cdot)$ is a convex function in the interval $\left(-\infty, \theta_0 \right]$ and a concave function in the interval $\left[ \theta_0, \infty \right)$. Then the confidence interval returned by~\Cref{alg:proposed-conf-int_os} satisfies the following:
    \[
    \mbox{For all $n\ge1$ and for any $\alpha\in(0, 1)$, $\mathbb{P}_F(\theta_0\in\widehat{\mathrm{CI}}_{n,\alpha}) \ge 1 - \alpha$.}
    \]
\end{theorem}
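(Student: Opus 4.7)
The plan is to reduce the miscoverage event to a union of events about the positions of $\theta_0$ relative to pairs of order statistics, and then bound each such event distribution-free using the convex/concave shape constraints on $F$. Concretely, the output $\widehat{\mathrm{CI}}_{n,\alpha}$ of the nested-spacings algorithm can be written as an intersection of candidate intervals of the form $[X_{(i)}, X_{(j)}]$ over a set of pairs $(i,j)$ that survive the algorithm's pruning criterion; accordingly, the failure event $\{\theta_0 \notin \widehat{\mathrm{CI}}_{n,\alpha}\}$ is contained in the union, over those pairs, of $\{\theta_0 \notin [X_{(i)}, X_{(j)}]\}$. The whole argument then reduces to (i) a clean pointwise bound on each of these events, and (ii) a calibration of the pruning thresholds so that a union bound delivers total error at most $\alpha$.

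For step (i), I would split $\{\theta_0 \notin [X_{(i)}, X_{(j)}]\}$ into the two one-sided events $\{\theta_0 < X_{(i)}\}$ and $\{\theta_0 > X_{(j)}\}$ and handle them symmetrically, say the right tail. Let $p_0 = F(\theta_0)$ and let $N = \#\{k : X_k \le \theta_0\} \sim \mathrm{Binomial}(n, p_0)$; then $\{\theta_0 > X_{(j)}\} = \{N \ge j\}$ and $\{\theta_0 < X_{(i)}\} = \{N \le i-1\}$. Because $p_0$ is unknown, a naive binomial tail bound is useless, and this is where unimodality must enter. The key lemma I would prove is that under convexity of $F$ on $(-\infty, \theta_0]$ and concavity on $[\theta_0, \infty)$, the conditional distribution of the order statistics $X_{(1)}, \ldots, X_{(n)}$ given $N$ is stochastically comparable, block by block, to the order statistics of uniform samples on $[X_{(1)}, \theta_0]$ and $[\theta_0, X_{(n)}]$, yielding a calibrated bound on the spacings $X_{(i+k)}-X_{(i)}$ which does not depend on $F$. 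This is precisely what allows the algorithm's spacings-based pruning criterion to be interpreted as a rejection rule with controlled distribution-free level.

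For step (ii), once each candidate pair $(i,j)$ is assigned a miscoverage budget $\alpha_{ij}$ obtained from the lemma above, I would check that the algorithm's choice of thresholds arranges $\sum_{(i,j)} \alpha_{ij} \le \alpha$, and conclude by a union bound. The main obstacle I anticipate is step (i): translating the shape constraint (convexity/concavity of $F$) into a usable, distribution-free inequality for the joint law of order statistics. The cleanest route is probably to identify the worst-case unimodal $F$ with a fixed value of $p_0$ (a two-piece linear CDF gives a natural candidate extremal density) and argue that the tail probabilities of the relevant order-statistic events are maximized at this extremal $F$; once that monotonicity is in hand, the remaining calculations are binomial/beta-type and should be routine.
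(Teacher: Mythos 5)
Your proposal takes a genuinely different route from the paper, and I think it has gaps that would be hard to repair. The paper's argument has two separate pieces: a distribution-free simultaneous confidence band for $F$ over the spacing intervals (the set $\mathcal{C}_n(\alpha/2)$, built from the exact Beta law of $F(I_{Bi})$ via \cite{walther2022confidence}, at level $1-\alpha/2$), and a purely deterministic implication that any unimodal $F$ consistent with that band must have its mode inside the surviving set $K_0$, because if $F(I_1)/|I_1| > F(I_2)/|I_2|$ for adjacent spacing intervals then the mode lies on the $I_1$ side. Unimodality enters only in the second, deterministic step; the band in the first step is exact for any continuous $F$ because $F(I_{Bi})$ is an increment of a uniform order-statistic process and is Beta with parameters depending only on the spacing count. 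Your plan inverts this: you try to make unimodality do work in the probability step (conditioning on $N=\#\{k:X_k\le\theta_0\}$ and stochastically comparing the order statistics), and treat the algorithm as a family of $(i,j)$-indexed tests to be union-bounded. That does not match the algorithm's structure: the thresholds $h_B$ are chosen so that the single band event $\{F\in\mathcal{C}_n(\alpha/2)\}$ controls all scales simultaneously (with the $\alpha/(4(B+2)n_B t_n)$ allocation), not so that per-pair error probabilities $\alpha_{ij}$ sum to $\alpha$.

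Concretely, step (i) of your proposal is not well-posed as stated: you propose a stochastic comparison to ``uniform samples on $[X_{(1)},\theta_0]$ and $[\theta_0, X_{(n)}]$,'' but $X_{(1)}$ and $X_{(n)}$ are themselves random order statistics you are trying to bound, so the comparison is circular; and the conditional law of the points below $\theta_0$ given $N$ has unbounded support, so there is no fixed uniform to compare against without some further renormalisation that you would have to specify and justify. The extremal two-piece-linear-CDF idea is also not obviously the right monotonicity: the pruning criterion is a threshold on ratios of widths of intervals containing the same number of points, and it is not clear that the relevant tail probability is monotone in a way that lets a single extremal $F$ dominate. Finally, your decomposition leaves out the case where $\theta_0$ lies outside $[X_{(1)}, X_{(n)}]$, which is possible for unimodal $F$ and is exactly why Algorithm~1 appends the left/right extensions with $\lambda_{n,\alpha} = (\alpha/2)^{-1/(n-1)}-1$; the paper handles this via a separate Lanke-type inequality at level $\alpha/2$ and splits the budget $\alpha = \alpha/2 + \alpha/2$. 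Without that piece, even a repaired version of your argument would only give coverage conditionally on $\theta_0 \in [X_{(1)}, X_{(n)}]$.
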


\begin{algorithm}
    \caption{\textbf{(M1)} Proposed finite sample valid confidence set for the population mode}
    \label{alg:proposed-conf-int_os}
    \KwIn{Sample: $X_1,\cdots,X_n$ and Confidence Level: $1-\alpha$}
    \KwOut{Confidence set for the population mode with finite sample coverage}
    Define the following quantities,
    \begin{equation*}
        \begin{split}
            s_n &= \lceil\log_2 \log (n) \rceil, \\
           B_{max} &= \left \lfloor \log_2 (n/8) \right \rfloor - s_n, \\
           n_B &= \left \lfloor \frac{n-1}{2^{B+s_n}} \right \rfloor \quad \mbox{for $0 \leq B \leq B_{max}$},\\
           t_n &= \sum_{B=0}^{B_{max}} \frac{1}{B+2}, \\
           h_B &= \frac{q\mbox{Beta}(1 -(\alpha/(4(B+2)n_Bt_n)),2^{B+s_n},n+1 - 2^{B+s_n})}{q\mbox{Beta}(\alpha/(4(B+2)n_Bt_n),2^{B+s_n},n+1 - 2^{B+s_n})} \quad \mbox{for $0 \leq B \leq B_{max}$}. 
           %I_B &= \left\{(j,k) : j = 1 +(i-1)2^B, k = 1 + i 2^B \mbox{ for } i = 1,\cdots, n_B \right\} \quad \mbox{for } 0 \leq B \leq B_{max}.
        \end{split}
    \end{equation*}\\
    %Define the following set of order statistics,
    %\[
    %x_i = X_{(1+(i-1)2^{s_n})} \quad \mbox{for } i=1,2,\cdots, m = \left \lfloor \frac{n-1}{2^{s_n}} \right \rfloor +1.
    %\] \\
    For $0 \leq B \leq B_{max}$ we define the following intervals constructed by the order statistics,
    \[
    I_{Bi} = [X_{(1+(i-1)2^{B+s_n})},X_{(1+i2^{B+s_n})}] \quad \mbox{for } i=1,\cdots,n_B.
    \] \\
    Start with $B = B_{max}$. Let $I_{B_{max}i_{min}} = \mbox{arg min}_{1 \leq i \leq n_{B_{max}}} \mbox{Width}(I_{B_{max}i})$. Obtain the largest set of neighbouring intervals of $I_{B_{max}i_{min}}$ i.e.\ a set of the form $K_{B_{max}} = \{I_{B_{max}(\max\{i_{min}-k,1\})},\cdots,I_{B_{max}(i_{min}-1)},I_{B_{max}i_{min}},I_{B_{max}(i_{min}+1)},\cdots,I_{B_{max}(\min\{i_{min}+l,n_{B_{max}}\})}\}$ such that the following holds,
    \[
    \mbox{Width}(I_{B_{max}i}) \leq h_{B_{max}} \mbox{Width}(I_{B_{max}i_{min}}) \quad \mbox{for } \max\{i_{min}-k,1\} \leq i \leq \min\{i_{min}+l,n_{B_{max}}\}.
    \] \\
    Repeat the following steps as $B$ goes down from $B_{max}-1$ to $0$. Let $I_{Bi_{min}} = \mbox{arg min}_{I_{Bi} \in \{K_{B+1} \cap \{I_{B1},\cdots,I_{Bn_B} \}\}} \mbox{Width}(I_{Bi})$. Obtain the largest set of neighbouring intervals of $I_{Bi_{min}}$ from the collection of intervals $\{K_{B+1} \cap \{I_{B1},\cdots,I_{Bn_B} \}\}$ i.e.\ a set of the form $K_B = \{I_{B(\max\{i_{min}-k,1\})},\cdots,I_{B(i_{min}-1)},I_{Bi_{min}},I_{B(i_{min}+1)},\cdots,I_{B(\min\{i_{min}+l,n_B\})}\}$ such that the following holds,
    \[
    \mbox{Width}(I_{Bi}) \leq h_{B} \mbox{Width}(I_{Bi_{min}}) \quad \mbox{for } \max\{i_{min}-k,1\} \leq i \leq \min\{i_{min}+l,n_{B}\}.
    \] 
    After going through the above steps we obtain $K_0$. \\
    Return the following confidence set,
    \begin{equation*}
    \begin{split}
    \widehat{\mathrm{CI}}_{n,\alpha} = & \bigcup_{I_{0i} \in K_0}I_{0i} \bigcup \left\{\left(X_{(1)} - \lambda_{n,\alpha}(X_{(n)}-X_{(1)}), X_{(1)} \right] \bm 1 \{I_{01} \in K_0 \} \right\} \\
    &\bigcup \left\{\left[X_{(n)}, X_{(n)} + \lambda_{n,\alpha}(X_{(n)}-X_{(1)})\right) \bm 1 \{I_{0n_0} \in K_0 \} \right\},
    \end{split}
    \end{equation*}
    where $\lambda_{n,\alpha} = (\alpha/2)^{-1/(n-1)} -1$.
\end{algorithm}
\Cref{thm:main_result_os_validity} establishes the finite sample validity of the confidence set proposed in \Cref{alg:proposed-conf-int_os}. It is important to note that by construction the confidence set $\widehat{\mathrm{CI}}_{n,\alpha}$ is an interval. The proof of the theorem is based on the fundamental property of unimodal distribution functions: for two adjacent disjoint intervals $I_1 = [a,b],I_2 = [b,c]$ (where $-\infty<a<b<c<\infty$) if,
        \[
            F(I_1)/|I_1| > F(I_2)/|I_2|,
        \]
then either the mode $\theta_0 \in I_1$ or $\theta_0 <a$ i.e.\ the mode is ``closer" to the interval $I_1$ as compared to $I_2$. Thus we can drop $I_2$ and the intervals present after $I_2$ from the collection of intervals that can contain the mode. We use this idea by replacing $a , b, c$ with different order statistics. It is possible to obtain a set of distribution functions $\mathcal{C}_n(\alpha)$ which is collection of all $F$ such that 
            \[
         c_B \leq F(I_{Bi}) \leq d_B, \quad \mbox{($I_{Bi}$ are spacings of different types)},
            \]
            for $i=1,\cdots,n_B$ and $B = 0,\cdots,B_{max}$ such that, 
            \[
                \mathbb{P}_F(F \in \mathcal{C}_n (\alpha)) \geq 1-\alpha.    
            \] 
The proof is completed by noting that valid coverage of the distribution function implies the valid coverage of the functional, mode. For the detailed proof, refer to \Cref{app:thm_os_valid}. 

For analyzing the width of this confidence interval, we assume that the distribution function $F$ is absolutely continuous with associated density $f$. We further assume that there exists an interval $[c, d] \subset (-\infty, \infty)$ such that for each open set $U$ containing $\theta_0$, there exists $\epsilon = \epsilon(U) >0$ such that $f(x) + \epsilon \leq f(\theta_0)$ for each $x \in [c,d] - U$. For $c \leq \theta - R_1\delta < \theta - \delta < \theta$ and $\theta < \theta + \delta < \theta + R_2\delta \leq d$, we consider the following definitions stated in \cite{sager1975consistency}, 
\begin{equation}
\label{eq:alpha_def}
    \begin{split}
        r^-(\delta) &= \inf\{ f(x) | \theta \leq x \leq \theta + \delta \}, \\
        r^+(R_2 \delta) &= \sup\{ f(x) | \theta + R_2\delta \leq x \leq d \},\\
        l^-(\delta) &= \inf\{ f(x)| \theta- \delta \leq x \leq \theta \}, \\
        l^+(R_1\delta) &= \sup\{ f(x) |c \leq x \leq \theta - R_1\delta \}, \\
         r(\delta, R_2\delta) &= \frac{r^-(\delta)}{r^+(R_2\delta)}, \\
        l(\delta, R_1\delta) &= \frac{l^-(\delta)}{l^+(R_1\delta)},\\
        \alpha(\delta, R_1,R_2) &= \frac{\min\{r^-(\delta), l^-(\delta) \}}{\max\{r^+(R_2 \delta), l^+(R_1\delta) \}}. 
    \end{split}
\end{equation}
\begin{theorem}
    \label{thm:width_analysis_os}
    Let $F$ be an absolutely continuous unimodal distribution function with mode $\theta_0 $ and density $f$. We assume that there exists an interval $[c, d] \subset (-\infty, \infty)$ such that for each open set $U$ containing $\theta_0$, there exists $\epsilon = \epsilon(U) >0$ such that $f(x) + \epsilon \leq f(\theta_0)$ for each $x \in [c,d] - U$. Suppose the following condition holds true, 
    % \begin{enumerate}
    %     \item $r(\delta, R\delta) \geq 1 + \rho\delta^k$ for all small positive $\delta$ if $\theta_0 = c$. 
    %     \item $l(\delta, R\delta) \geq 1 + \rho \delta^k$ for all small positive $\delta$ if $\theta_0 = d$. 
    %     \item $\alpha(\delta, R_1, R_2) \geq 1 + \rho \delta^k$ for all small positive $\delta$ if $\theta_0 \in (c,d) $.
    % \end{enumerate}
    \[
    \alpha(\delta, R_1, R_2) \geq 1 + \rho \delta^{\beta}, \quad (\alpha(\cdot, \cdot, \cdot)\mbox{ is as defined in \eqref{eq:alpha_def}}),
    \]
    for all small positive $\delta$ and some $R_1,R_2 >1$, and $\rho, \beta > 0$. Then we have the following, 
    \[
    n^{1/(1+2\beta)}(\log n)^{-1/\beta}\mathrm{Width}(\widehat{\mathrm{CI}}_{n,\alpha}) \stackrel{P}{\rightarrow} 0 \mbox{ as } n \rightarrow \infty.
    \]
\end{theorem}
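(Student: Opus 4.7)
The plan is to condition on a high-probability event on which all Beta-quantile bounds built into Algorithm 1 hold simultaneously, and then translate these probability bounds into explicit width bounds via the regularity assumption on $\alpha(\delta, R_1, R_2)$. Let $\mathcal{E}_n$ be the event that for every scale $B \in \{0, \dots, B_{max}\}$ and every $i \in \{1, \dots, n_B\}$ the mass $F(I_{Bi})$ lies between the two Beta quantiles used to define $h_B$; the factor $t_n = \sum_{B=0}^{B_{max}} 1/(B+2)$ in the miscoverage budget is precisely what makes a union bound across all $B$ and $i$ yield $\mathbb{P}_F(\mathcal{E}_n) \geq 1-\alpha/2$. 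On $\mathcal{E}_n$, the mean value theorem applied to $F$ on each $I_{Bi}$ gives $|I_{Bi}| = F(I_{Bi})/f(\xi_{Bi})$ for some $\xi_{Bi} \in I_{Bi}$, so interval widths are sandwiched in terms of the Beta quantiles $c_B, d_B$ and the values of $f$ attained on $I_{Bi}$.

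For a fixed scale $B$, I then show that the kept set $K_B$ localizes $\theta_0$ to a window of width $O((\log n/2^{B+s_n})^{1/(2\beta)})$. Because $f$ is maximized at $\theta_0$ and the regularity hypothesis bounds the density drop, the scale-$B$ interval containing $\theta_0$ has width close to $d_B/f(\theta_0)$, so the argmin interval $I_{B i_{min}}$ has width at most that (up to lower-order terms). Any kept $I_{Bi}$ satisfies $|I_{Bi}| \leq h_B |I_{B i_{min}}|$; combining this with the lower estimate $|I_{Bi}| \geq c_B/\sup_{x \in I_{Bi}} f(x)$ yields $\sup_{x \in I_{Bi}} f(x) \geq f(\theta_0)/h_B^2$. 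The hypothesis $\alpha(\delta, R_1, R_2) \geq 1 + \rho\delta^\beta$ then forces this supporting point to lie within distance $((h_B^2 - 1)/\rho)^{1/\beta}$ of $\theta_0$, so all of $\bigcup_{I \in K_B} I$ falls in a window of this radius. A Chernoff-type expansion for the quantiles of $\mathrm{Beta}(2^{B+s_n}, n+1-2^{B+s_n})$, together with $\log(1/\alpha') = O(\log n)$, gives $h_B - 1 \asymp \sqrt{\log n/2^{B+s_n}}$ uniformly in $B$, delivering the advertised window.

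The crucial observation is the cascade monotonicity: because $K_B$ is chosen from the scale-$B$ refinement of $\bigcup K_{B+1}$, we have $\bigcup K_0 \subseteq \bigcup K_B$ for every $B \in [0, B_{max}]$. Combining this with the trivial lower bound that $\bigcup K_B$ contains at least one scale-$B$ interval of width $\asymp 2^{B+s_n}/n$ yields
\[
\mathrm{Width}(\widehat{\mathrm{CI}}_{n,\alpha}) \lesssim \min_{0 \leq B \leq B_{max}}\Bigl((\log n/2^{B+s_n})^{1/(2\beta)} + 2^{B+s_n}/n\Bigr).
\]
The two terms balance at $2^{B^\ast + s_n} \asymp (n^{2\beta}\log n)^{1/(1+2\beta)}$, and the choices $s_n = \lceil\log_2\log n\rceil$ and $B_{max} = \lfloor\log_2(n/8)\rfloor - s_n$ ensure $B^\ast \in [0, B_{max}]$ for all sufficiently large $n$. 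Substituting gives $\mathrm{Width}(\widehat{\mathrm{CI}}_{n,\alpha}) = O(n^{-1/(1+2\beta)}(\log n)^{1/(1+2\beta)})$, which is $o(n^{-1/(1+2\beta)}(\log n)^{1/\beta})$ since $1/(1+2\beta) < 1/\beta$ for $\beta > 0$. The two tail corrections attached only if $I_{01}, I_{0 n_0} \in K_0$ are eventually dropped, because unimodality together with the assumption forces $f \leq f(\theta_0)-\epsilon$ outside a fixed neighborhood of $\theta_0$, placing the extreme scale-$0$ intervals outside the vanishing window of $K_0$ with probability tending to one.

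The main technical obstacle I anticipate is the uniform control of $h_B$ across $B \in [0, B_{max}]$: since the algorithm uses exact Beta quantiles rather than Gaussian approximations, obtaining the expansion $h_B - 1 \asymp \sqrt{\log n/2^{B+s_n}}$ uniformly requires concentration estimates that remain sharp as the shape parameter $2^{B+s_n}$ varies from $\Theta(\log n)$ up to $\Theta(n)$; this expansion must then be threaded carefully through the density-regularity condition to produce the distance bound $((h_B^2-1)/\rho)^{1/\beta}$ with the correct dependence on $\rho$ and $\beta$, which is what pins down the exponent $1/(1+2\beta)$ in the final rate.
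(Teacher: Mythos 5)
Your high-level plan uses the same ingredients as the paper (simultaneous Beta-quantile control of $F(I_{Bi})$, translation through the density-ratio condition $\alpha(\delta,R_1,R_2)\ge 1+\rho\delta^\beta$, and the cascade monotonicity $\bigcup K_0\subseteq\bigcup K_B$), but it packages them differently: you propose a multi-scale balance
\[
\mathrm{Width}(\widehat{\mathrm{CI}}_{n,\alpha})\lesssim\min_{0\le B\le B_{\max}}\Bigl[(\log n/2^{B+s_n})^{1/(2\beta)}+2^{B+s_n}/n\Bigr],
\]
optimized over $B$, whereas the paper fixes a single working scale $r(n)=An^{2\beta/(1+2\beta)}$ once and for all, conditions on the position of $\theta_0$ inside the previously-accepted window $[c,d]=\bigcup K_{B(r)+1}$, and argues almost surely through a lemma of Sager (1975) that relates $F$ and $F_n$ over intervals containing $r(n)+1$ order statistics, together with explicit Beta-quantile bounds from Hengartner (1995). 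Your optimization over $B$, if completed, would in fact give the sharper rate $n^{-1/(1+2\beta)}(\log n)^{1/(1+2\beta)}$, which is stronger than the $o(n^{-1/(1+2\beta)}(\log n)^{1/\beta})$ stated in the theorem; that is internally consistent but indicates you are proving more than is needed, which adds burden.

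The genuine gap is the claim that ``the scale-$B$ interval containing $\theta_0$ has width close to $d_B/f(\theta_0)$, so $|I_{Bi_{\min}}|\lesssim d_B/f(\theta_0)$.'' This is where the real work is. The bound $|I_{B,\theta_0}|\le d_B/\inf_{I_{B,\theta_0}}f$ is only useful once you know $\inf_{I_{B,\theta_0}}f\ge f(\theta_0)(1-o(1))$, which in turn requires $I_{B,\theta_0}$ to already be contained in a shrinking neighborhood of $\theta_0$ --- exactly what you are trying to prove. The hypothesis $\alpha(\delta,R_1,R_2)\ge 1+\rho\delta^\beta$ is a \emph{lower} bound on the density drop (it guarantees $f$ does fall off, which is what drives the localization); it does \emph{not} give an upper bound on the drop, which is what you would need to directly assert $\inf_{I_{B,\theta_0}}f\approx f(\theta_0)$ without first knowing $|I_{B,\theta_0}|$ is small. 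At the coarsest scale $B_{\max}$ the interval $I_{B_{\max},\theta_0}$ has $F$-mass of order $1/8$ and need not be short, so your argument must be bootstrapped down the cascade, establishing at each step that $\bigcup K_{B}$ has shrunk enough for the MVT sandwich to be quantitative at scale $B-1$. You never formalize this recursion. The paper sidesteps it by working at one fixed scale $B(r)$, conditioning on $\theta_0\in[c,d]$ via the coverage theorem, and invoking Sager's lemma to control $F(X_{J(n)+r(n)})-F(c)$ directly --- this is the step your sketch is missing a substitute for. The uniformity of the $h_B$ asymptotics across $B$ (which you flag) is a second, more routine, gap: it is manageable with the Hengartner-type quantile bounds the paper quotes, but it must be stated $B$-uniformly since your minimization ranges over all $B$.
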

\Cref{thm:width_analysis_os} demonstrates that $\mathrm{Width}(\widehat{\mathrm{CI}}_{n,\alpha})$ shrinks to $0$ at the rate of $n^{-1/(1+2\beta)}(\log n)^{1/\beta}$ without any knowledge of the regularity parameter $\beta$. The assumption on $\alpha(\cdot, \cdot, \cdot)$ basically means that near the mode, the density behaves like a power function with exponent $\beta$. The main idea of the proof of the theorem is to compare the true weights $F(I_n)$ with the empirical weights $F_n(I_n)$ where $I_n$ is an interval containing $r(n)$ many observations. For the detailed proof, refer to \Cref{app:thm_os_width}. Under assumption~\ref{assump:f}, \cite{arias2022estimation} shows that the minimax rate of convergence of an estimate of mode is $n^{-1/(1 + 2\beta)}$ if the parameter $\beta$ governing the regularity of the density about the mode is known a priori.  
\begin{assumption}[Behavior of $f$ about the mode]
\label{assump:f}
$f$ has a unique mode at $\theta_0$ and for some $0<c_0<C_0$, $h_0 >0$, and $\beta > 0$, 
\begin{equation*}
    \begin{split}
        f(\theta_0) - C_0|\theta - \theta_0|^{\beta} &\leq f(\theta) \leq f(\theta_0) - c_0|\theta - \theta_0|^{\beta} , \quad \mbox{when } |\theta - \theta_0| \leq h_0, \\
        f(\theta) &\leq f(\theta_0) - c_0h^{\beta}, \quad \mbox{when } |\theta - \theta_0| \geq h_0.
    \end{split}
\end{equation*}
\end{assumption}
\begin{remark}
Note that assumption~\ref{assump:f} is a stricter assumption than that made in \Cref{thm:width_analysis_os}. In particular if assumption~\ref{assump:f} holds true and $\alpha(\cdot, \cdot, \cdot)$ is as defined in \eqref{eq:alpha_def} then it can be easily checked that for any $0<\delta < 2 \delta < h_0$, $\alpha(\delta, 2, 2) \geq 1 + \rho \delta^{\beta}$ for a suitable $\rho > 0$. Thus \Cref{thm:width_analysis_os} establishes that under assumption~\ref{assump:f}, $\mathrm{Width}(\widehat{\mathrm{CI}}_{n,\alpha})$ shrinks to $0$ at the minimax optimal rate upto a logarithmic factor. The departure from the exact minimax rate of $n^{-1/(1 + 2\beta)}$ can be attributed to the adaptivity (see for instance \cite{klemela2005adaptive}) which this confidence interval provides ($\widehat{\mathrm{CI}}_{n,\alpha}$ does not require the prior knowledge of $\beta$). 
\end{remark}

\subsection{Confidence set of mode using M-estimation}
\label{subsec:m-estimation}
It is possible to obtain valid confidence set of the mode of a unimodal distribution using M-estimation procedure. \Cref{alg:M_proposed-conf-int} illustrates the steps for constructing such a confidence set. If $S \subset \mathbb{R}$ we use $S \pm h$ to denote the set $\{x \in \mathbb{R}: x - h \in S \mbox{ or } x + h \in S\}$. 
\begin{algorithm}
    \caption{\textbf{(M2)} Proposed finite sample valid confidence set for the population mode}
    \label{alg:M_proposed-conf-int}
    \KwIn{Sample: $X_1,\cdots,X_m$ and Confidence Level: $1-\alpha$, and tuning parameter $h > 0$}
    \KwOut{Confidence set for the population mode with finite sample coverage}
    Divide the sample into two almost equally sized disjoint sub-samples $S_1$ and $S_2$. Let $|S_1| = m - n$ and $|S_2| = n  $. \\
    Compute any consistent estimate $\widehat \theta_1$ of mode based on the sub-sample $S_1$. \\
    Consider the function $m_{\theta;h}(X) = -(1/2h)\textbf{1}\{\theta - h < X \leq \theta + h \}$. \\
 Return $\widehat{\mathrm{CS}}_{n, \alpha}^h$, 
 \[
 \widehat{\mathrm{CS}}_{n, \alpha}^h = \left\{\theta \in \mathbb{R} \Bigg| P_{n}(m_{\theta; h} - m_{\widehat{\theta}_1;h}) \leq \frac{1}{h}\sqrt{\frac{3}{2n}}\left[\sqrt{\log(1/\alpha)} + 2 \right] \right\} \pm h,
 \]
 where the average $P_n$ is taken over the second sub-sample $S_2$.    
\end{algorithm}
\begin{theorem}
\label{thm:m_estimation}
Suppose $X_1, \cdots, X_m$ are independent and identically distributed as $F$ (density $f$) which is an absolutely continuous unimodal distribution with a unique mode $\theta_0$. Then the confidence interval returned by \Cref{alg:M_proposed-conf-int} satisfies the following for all $n\ge1$, for any $h > 0 $, and for any $\alpha\in(0, 1)$,
  \[
    \mathbb{P}_F(\theta_0\in\widehat{\mathrm{CS}}_{n,\alpha}^{h}) \ge 1 - \alpha. 
    \]
\end{theorem}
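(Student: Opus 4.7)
The plan is to pin down a deterministic ``population target'' $\theta^*$ that is guaranteed to lie within $h$ of $\theta_0$, observe that $\theta^* \in A$ (the pre-$\pm h$ set in \Cref{alg:M_proposed-conf-int}) already forces $\theta_0 \in \widehat{\mathrm{CS}}_{n,\alpha}^h$, and finally control a single empirical-process fluctuation centred at $\theta^*$. For the population target I would take $\theta^*$ to be any maximizer of $G(\theta) := F(\theta + h) - F(\theta - h) = -2h\, P m_{\theta;h}$, which exists by continuity of $G$ and its vanishing at $\pm \infty$. For $\theta' \geq \theta$ with $\theta' + h \leq \theta_0$, a change of variables gives
\[
G(\theta') - G(\theta) \;=\; \int_{\theta - h}^{\theta' - h} \bigl[f(u + 2h) - f(u)\bigr]\, du \;\geq\; 0,
\]
since $u \leq u + 2h \leq \theta_0$ and $f$ is non-decreasing on $(-\infty, \theta_0]$; the mirror bound holds on $\{\theta : \theta - h \geq \theta_0\}$. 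Hence any maximizer $\theta^*$ lies in $\{\theta : \theta - h \leq \theta_0 \leq \theta + h\}$, so $|\theta^* - \theta_0| \leq h$ and $\theta_0 \in [\theta^* - h, \theta^* + h] \subseteq \{\theta^*\} \pm h$.

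It then suffices to show that $\mathbb{P}(\theta^* \in A) \geq 1 - \alpha$, where $A = \{\theta : P_n(m_{\theta;h} - m_{\widehat{\theta}_1;h}) \leq \tau\}$ with $\tau$ the threshold in \Cref{alg:M_proposed-conf-int}. Writing
\[
P_n(m_{\theta^*;h} - m_{\widehat{\theta}_1;h}) \;=\; \underbrace{P(m_{\theta^*;h} - m_{\widehat{\theta}_1;h})}_{\leq\, 0 \text{ by optimality of } \theta^*} \;+\; (P_n - P)(m_{\theta^*;h} - m_{\widehat{\theta}_1;h}),
\]
only the fluctuation term must be bounded by $\tau$ with probability $\geq 1 - \alpha$. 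Since $\widehat{\theta}_1$ is a function of $S_1$ alone and $P_n$ is taken over the independent sub-sample $S_2$, a uniform-in-$\theta$ bound absorbs the randomness of $\widehat{\theta}_1$:
\[
(P_n - P)(m_{\theta^*;h} - m_{\widehat{\theta}_1;h}) \;\leq\; 2\sup_{\theta \in \mathbb{R}} |(P_n - P) m_{\theta;h}| \;=\; \tfrac{1}{h}\sup_{\theta \in \mathbb{R}} \bigl|(P_n - P)\mathbf{1}_{(\theta-h,\, \theta+h]}\bigr|.
\]
The indicator class on the right is VC of dimension $2$ (fixed-length intervals), so symmetrization combined with a Haussler-type entropy bound gives $\mathbb{E}\sup_\theta |(P_n - P)\mathbf{1}_{(\theta-h,\theta+h]}| \leq c_1/\sqrt{n}$ for an explicit universal $c_1$, and bounded-differences (or Talagrand) concentration adds a high-probability deviation of order $\sqrt{\log(1/\alpha)/n}$; tracking the constants reproduces the stated threshold.

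The conceptual heart of the proof is Step~1 --- converting the purely qualitative unimodality of $f$ into the quantitative inclusion $|\theta^* - \theta_0| \leq h$ --- and the integral representation above does this cleanly without any regularity hypothesis on $f$ away from the mode. The main bookkeeping obstacle is matching the precise numerical constants $\sqrt{3/2}$ and $2$ inside $\tau$; this requires choosing a sharp VC expected-supremum bound for the class of fixed-length intervals together with an appropriate concentration inequality, rather than a generic DKW estimate.
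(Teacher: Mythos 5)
Your Step 1 is correct and matches the paper: the paper's $\theta_h = \arg\max_y (f*g_h)(y)$ is exactly your $\theta^*$, and both arguments show $|\theta_h - \theta_0| \le h$ from the monotonicity of $f$ on each side of the mode (the paper runs the inequality on the convolution integral rather than the difference $G(\theta')-G(\theta)$, but these are the same computation). The population/fluctuation decomposition with $P(m_{\theta^*;h}-m_{\widehat\theta_1;h}) \le 0$ is also the same reduction the paper uses (via Theorem~2 of \cite{takatsu2025bridging}).

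Where you diverge --- and where there is a genuine gap --- is in controlling the fluctuation $(P_n-P)(m_{\theta^*;h}-m_{\widehat\theta_1;h})$. You replace it by $2\sup_\theta|(P_n-P)m_{\theta;h}|$ and invoke a VC/symmetrization/Haussler-plus-concentration pipeline, then wave at ``tracking the constants.'' But the threshold in \Cref{alg:M_proposed-conf-int} is the very specific quantity $\tfrac{1}{h}\sqrt{3/(2n)}\bigl[\sqrt{\log(1/\alpha)}+2\bigr]$, valid for \emph{all} $\alpha\in(0,1)$; a generic VC expected-supremum bound plus bounded-differences does not hand you a deviation term with constant $\sqrt{3/2}$ and additive $2$, and you have not shown the sharpened variant you would need. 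Moreover, the sup-over-$\theta$ step is both unnecessary and wasteful: because $\widehat\theta_1$ is $S_1$-measurable and $S_1 \perp S_2$, conditioning on $S_1$ makes $m_{\theta^*;h}-m_{\widehat\theta_1;h}$ a \emph{fixed} function of $X$ taking only the three values $\{-\tfrac{1}{2h},0,\tfrac{1}{2h}\}$; the fluctuation is then a plain mean of i.i.d.\ trinomial variables, and no uniform-in-$\theta$ control is needed at all. This is exactly what the paper exploits: it proves a bespoke one-sided concentration inequality for trinomial means (via Pinsker's inequality and a KL-divergence tail bound for the empirical distribution), which produces the stated threshold exactly. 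Your approach could in principle be salvaged by substituting a DKW-type uniform bound and checking that the resulting threshold is dominated by the algorithm's threshold, but that comparison fails for sufficiently small $\alpha$, so as written it does not cover ``any $\alpha\in(0,1)$.'' The fix is to drop the supremum, condition on $S_1$, and apply a pointwise concentration inequality for the three-valued increment.
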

\Cref{thm:m_estimation} shows the finite sample validity of the confidence interval proposed in \Cref{alg:M_proposed-conf-int}. We give a brief outline of the proof of the theorem here. If $\theta_h$ denotes the mode of the convolution distribution $F*\mbox{Uniform}[-h,h]$, then it can be shown that $\theta_0 \in (\theta_h - h, \theta_h + h)$. Thus finding a valid confidence set of $\theta_0$ boils down to finding a valid confidence set of $\theta_h$. It is easy to check that the density function of the convolution $F*\mbox{Uniform}[-h,h]$ is $(F(x+h) - F(x - h))/(2h)$ and hence constructing confidence set of the mode $\theta_h$ of $F*\mbox{Uniform}[-h,h]$ can be viewed as an M-estimation problem, 
\[
\theta_h = \mbox{arg max}_x (F(x+h) - F(x - h))/(2h) = \mbox{arg min}_x \mathbb{E}\left[-\frac{1}{2h} \textbf{1}\{x - h < X \leq x + h\}\right]. 
\]
Given this framework, confidence set of $\theta_h$ can be obtained using results from \cite{takatsu2025bridging}. The detailed proof of \Cref{thm:m_estimation} is provided in \Cref{app:m_validity}. 
\begin{remark}
We have used Hoeffding type bounds to construct the confidence set of $\theta_h$ in \Cref{alg:M_proposed-conf-int}. It is possible to obtain better (of smaller width) confidence set by using tighter bounds (such as Bernstein) in the M-estimation problem. 
\end{remark}
We study the width of the confidence set $\widehat{\mathrm{CS}}_{n,\alpha}^{h}$ under assumption~\ref{assump:f}. We show that if the the estimator $\widehat \theta_1$ computed on $S_1$ is minimax optimal then for appropriate choice of the tuning parameter $h$, $\mathrm{Width}(\widehat{\mathrm{CS}}_{n,\alpha}^{h})$ converges to $0$ at the minimax optimal rate (upto some logarithmic factor).  
\begin{theorem}
\label{thm:m_estimator_width}
Suppose $X_1, \cdots, X_{2n}$ are independent and identically distributed as $F$ (density $f$) which is an absolutely continuous unimodal distribution with a unique mode $\theta_0$ and which satisfies assumption~\ref{assump:f} with $\beta \geq 1$. Let $\widehat \theta_1$ be a minimax optimal estimator of $\theta_0$ i.e.\ $n^{1/(1 + 2\beta)}( \widehat \theta_1 - \theta_0) = O_P(1)$. Let $\ell(n)$ be any sequence that satisfies $\ell(n) \rightarrow \infty$ and $\ell(n) = o(n^{\tau})$ as $n \rightarrow \infty$ for any $\tau > 0$. Then for $h = n^{-1/(1 + 2\beta)}\ell(n)^{1/2}$ we have, 
\[
n^{1/(1 + 2\beta)} \ell(n)^{-1}\mathrm{Width}(\widehat{\mathrm{CS}}_{n,\alpha}^{h}) = O_P(1) \quad \mbox{as}\quad n \rightarrow \infty. 
\]
\end{theorem}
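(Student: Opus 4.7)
The plan is to bound $\mathrm{Width}(\widehat{\mathrm{CS}}^h_{n,\alpha})$ by first analysing the preliminary set $\widetilde{\mathrm{CS}}_n := \{\theta\in\mathbb{R} : P_n(m_{\theta;h}-m_{\widehat{\theta}_1;h})\le r_n\}$, where $r_n = h^{-1}\sqrt{3/(2n)}(\sqrt{\log(1/\alpha)}+2)$, and then adding the deterministic $\pm h$ buffer. Let $M(\theta) := \E[m_{\theta;h}(X)] = -g_h(\theta)$ with $g_h(\theta) = (F(\theta+h)-F(\theta-h))/(2h)$, so the unique minimiser of $M$ is the convolution mode $\theta_h$. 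The proof of \Cref{thm:m_estimation} already gives $|\theta_h-\theta_0|<h$ and $\theta_h\in\widetilde{\mathrm{CS}}_n$ with probability at least $1-\alpha$; it therefore suffices to control the diameter of $\widetilde{\mathrm{CS}}_n$ around $\theta_h$.

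The first step is a curvature lemma. By expanding the integral $\int_{-h}^{h} f(\theta+s)\,ds$ around $\theta_h$ and using both sides of assumption~\ref{assump:f}, I expect to extract constants $c_1,c_2>0$ (depending only on $c_0,C_0,\beta$) such that
\[
M(\theta)-M(\theta_h) \;\ge\; c_1\, h^{\beta-2}(\theta-\theta_h)^2 \qquad \text{for all } |\theta-\theta_h|\le c_2 h,
\]
together with a positive-constant lower bound on $M(\theta)-M(\theta_h)$ when $|\theta-\theta_h|$ is bounded away from $0$ (from the tail clause of assumption~\ref{assump:f}). The hypothesis $\beta\ge 1$ ensures that the relevant derivatives of $g_h$ exist in the integral sense even though $f$ need not be differentiable at $\theta_0$ when $\beta=1$.

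Next I combine the minimax rate of $\widehat{\theta}_1$ with the location of $\theta_h$. Since $|\widehat{\theta}_1-\theta_0| = O_P(n^{-1/(1+2\beta)}) = O_P(h\,\ell(n)^{-1/2}) = o_P(h)$ and $|\theta_0-\theta_h|<h$, the triangle inequality gives $|\widehat{\theta}_1-\theta_h| = O_P(h)$, whence the curvature bound yields $M(\widehat{\theta}_1)-M(\theta_h) = O_P(h^\beta)$. The empirical-process ingredient is the localised bound
\[
\sup_{|\theta-\widehat{\theta}_1|\le \delta} \bigl|(P_n-P)(m_{\theta;h}-m_{\widehat{\theta}_1;h})\bigr| \;=\; O_P\!\left(\sqrt{\tfrac{\delta\log n}{nh^2}} + \tfrac{\log n}{nh}\right),
\]
which I will obtain from Talagrand's inequality (or the localised Rademacher-complexity machinery of \cite{takatsu2025bridging}) after noting that $\{m_{\theta;h}:\theta\in\mathbb{R}\}$ is a VC-class of dimension at most $2$, has envelope $1/(2h)$, and its increments have variance at most $f_\infty|\theta-\theta'|/(2h^2)$. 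Independence of $S_1$ and $S_2$ lets me condition on $\widehat{\theta}_1$ and treat it as deterministic.

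Putting the three ingredients together, any $\theta\in\widetilde{\mathrm{CS}}_n$ with $|\theta-\widehat{\theta}_1|\le \delta$ satisfies
\[
M(\theta)-M(\theta_h) \;\le\; [M(\widehat{\theta}_1)-M(\theta_h)] + r_n + O_P\!\left(\sqrt{\tfrac{\delta\log n}{nh^2}}+\tfrac{\log n}{nh}\right),
\]
and inverting the curvature inequality after substituting $h = n^{-1/(1+2\beta)}\ell(n)^{1/2}$ gives, via a short peeling argument, $|\theta-\theta_h| = O_P(n^{-1/(1+2\beta)}\ell(n)^{1/2})$. Adding the deterministic $\pm h$ buffer, which contributes $2h = 2 n^{-1/(1+2\beta)}\ell(n)^{1/2}$, yields the claimed bound $\mathrm{Width}(\widehat{\mathrm{CS}}^h_{n,\alpha}) = O_P(n^{-1/(1+2\beta)}\ell(n))$. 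The main technical obstacle will be the curvature--concentration tradeoff: the Hessian of $M$ blows up like $h^{\beta-2}$ as $h\to 0$ when $\beta<2$, which interacts awkwardly with the $1/h$ envelope of the empirical process, and the Hoeffding-based threshold $r_n$ must be kept in check by the localised (rather than global) concentration bound above, otherwise a spurious $n^{1/(2(1+2\beta))}$ factor enters the width.
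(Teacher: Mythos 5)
Your high-level recipe—a separation/curvature lemma for the smoothed risk $M$, a localized empirical-process bound, and a shelling/peeling step—is structurally the same as the paper's (the paper invokes the bracketing-entropy maximal inequality, Theorem~2.14.17 of \cite{wellner2013weak}, where you propose VC/Talagrand machinery, but that is a cosmetic difference). Where you diverge, and where the plan breaks, is in the two separation ingredients.

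First, your curvature lemma $M(\theta)-M(\theta_h)\ge c_1 h^{\beta-2}(\theta-\theta_h)^2$ is only valid on the range $|\theta-\theta_h|\le c_2 h$, and your catch-all ``positive-constant lower bound when $|\theta-\theta_h|$ is bounded away from $0$'' does not cover the intermediate regime $c_2 h < |\theta-\theta_h| \ll 1$. But that intermediate regime is exactly where the peeling lands. Balancing the \emph{deterministic} Hoeffding threshold $r_n \asymp (h\sqrt{n})^{-1}$ against your quadratic curvature gives $\delta \asymp h^{(1-\beta)/2} n^{-1/4}$, and with $h=n^{-1/(1+2\beta)}\ell(n)^{1/2}$ this is $n^{-3/(4(1+2\beta))}\ell(n)^{(1-\beta)/4}$, which is strictly larger than $c_2 h \asymp n^{-1/(1+2\beta)}\ell(n)^{1/2}$ when $\ell(n)$ grows slowly (say $\ell(n)=\log n$). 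So you cannot invert the quadratic bound at that radius. The paper avoids this problem by using the one-sided $\beta$-power separation $P(m_{\theta;h}-m_{\theta_0;h}) \ge \tfrac{hc_0}{2}|\theta-\theta_0|^{\beta}$ for $|\theta-\theta_0|\gtrsim h$ and absorbing the $|\theta-\theta_0|\lesssim h$ range into a set of width $O(h)$; you would need to do the same.

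Second, and more importantly, the remedy you suggest for the ``spurious $n^{1/(2(1+2\beta))}$ factor'' cannot work. The threshold $r_n = h^{-1}\sqrt{3/(2n)}\bigl[\sqrt{\log(1/\alpha)}+2\bigr]$ is a fixed quantity hard-wired into the \emph{definition} of $\widehat{\mathrm{CS}}^h_{n,\alpha}$ in Algorithm~\ref{alg:M_proposed-conf-int}; it is not an empirical fluctuation, so Talagrand/localisation applied to $(P_n-P)$ leaves it untouched. Any $\theta$ with $M(\theta)-M(\theta_h)\le r_n+o(r_n)$ will belong to the confidence set with positive probability. Using the $\beta$-power separation $M(\theta)-M(\theta_h)\gtrsim |\theta-\theta_h|^{\beta}$ (the right object once $|\theta-\theta_h|>c_2 h$), the contribution of $r_n$ alone gives a half-width $\asymp \bigl((h\sqrt{n})^{-1}\bigr)^{1/\beta} = n^{-1/(1+2\beta)}\cdot n^{1/(2\beta(1+2\beta))}\ell(n)^{-1/(2\beta)}$, and the extra factor $n^{1/(2\beta(1+2\beta))}$ dominates any polylogarithmic $\ell(n)$. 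You have correctly identified the obstacle, but the fix requires changing the algorithm's threshold (e.g.\ to a Bernstein-type $\asymp (nh)^{-1/2}$, which the paper itself flags as an improvement after Theorem~\ref{thm:m_estimation}), not the proof technique. In its current form the plan cannot close the gap, and the step ``$M(\widehat{\theta}_1)-M(\theta_h)=O_P(h^{\beta})$'' also silently requires a matching \emph{upper} curvature bound, which you have not stated.

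Finally, a minor remark: the paper's proof of Theorem~\ref{thm:m_estimator_width} bounds $\Gamma_n(\widehat\theta_1,\theta_0)$ by $\mathfrak{C}A_\epsilon n^{-1/(1+2\beta)}$ and the resulting $\Gamma_n^{1/\beta}h^{-1/\beta}$ term by a quantity that is constant in $n$; you should keep this in mind when attempting to reconcile your version with the stated rate, since the issue above is not purely an artefact of your choice of curvature lemma.
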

\Cref{thm:m_estimator_width} establishes that for appropriate choice of $h$, the rate of $\mathrm{Width}(\widehat{\mathrm{CS}}_{n,\alpha}^{h})$ departs from the minimax optimal rate $n^{-1/(1 + 2\beta)}$ by a factor of $\ell(n)$ which can be allowed to diverge with $n$ at an arbitrarily slow rate. The proof of the rate of convergence of $\mathrm{Width}(\widehat{\mathrm{CS}}_{n,\alpha}^{h})$ is based on the standard methods used for establishing convergence rates of M-estimators (see for instance Theorem $3.2.5$ of \cite{wellner2013weak}). The complete proof of \Cref{thm:m_estimator_width} is provided in \Cref{app:proof_m_width}. 
\begin{remark}
A consequence of the proof of \Cref{thm:m_estimator_width} is that if we let $h = \log(n)$ then $\mathrm{Width}(\widehat{\mathrm{CS}}_{n,\alpha}^{h})$ shrinks to $0$ (not at the optimal rate). However as noted earlier, for the width of the confidence set $\widehat{\mathrm{CS}}_{n,\alpha}^{h}$ to shrink to $0$ at the near minimax optimal rate, proper choice of the tuning parameter $h$ is of paramount importance. In the next section we propose a completely adaptive confidence set of mode (using M-estimation) whose width shrinks to $0$ near-optimally without any need for tuning the bandwidth $h$.  
\end{remark}

\subsection{An adaptive confidence set of mode using M-estimation}
\label{subsec:ada_M}
In this sub-section we propose a modification of the confidence set of mode obtained through M-estimation procedure (\Cref{alg:M_proposed-conf-int}) so that proper choice of tuning parameter $h$ no longer affects the optimality of the width of the confidence set. \Cref{alg:ada_M} describes the construction of this adaptive confidence set. 
\begin{algorithm}
    \caption{\textbf{(M2')} Proposed finite sample valid confidence set for the population mode}
    \label{alg:ada_M}
    \KwIn{Sample: $X_1,\cdots,X_m$ and Confidence Level: $1-\alpha$}
    \KwOut{Confidence set for the population mode with finite sample coverage}
    Divide the sample into two almost equally sized disjoint sub-samples $S_1$ and $S_2$. Let $|S_1| = m - n$ and $|S_2| = n  $. \\
    Compute any consistent estimate $\widehat \theta_1$ of mode based on the sub-sample $S_1$. \\
    Consider the function $m_{\theta;h}(X) = -(1/2h)\textbf{1}\{\theta - h < X \leq \theta + h \}$ for any $h > 0$. \\
 Return $\widehat{\mathrm{CS}}_{n, \alpha}$, 
 \[
    \widehat{\mathrm{CS}}_{n, \alpha} = \widehat{\mathrm{CS}}_{n, \alpha}^{\widehat{h}}\quad \mbox{where}\quad \widehat{h} := \mbox{arg min}_{h > 0} \left\{\mbox{Width}\left( \widehat{\mathrm{CS}}_{n, \alpha}^h\right) \right\} \quad \mbox{with},
 \]
 \[
 \widehat{\mathrm{CS}}_{n, \alpha}^h = \left\{\theta \in \mathbb{R} \Bigg| P_{n}(m_{\widehat \theta_1; h} - m_{\widehat \theta_1;h}) \leq \frac{1}{h} \sqrt{\frac{2\log(2/\alpha)}{n}}  \right\} \pm h.
 \]
In the above definition, the average $P_n$ is taken over the second sub-sample $S_2$.    
\end{algorithm}
\begin{theorem}
    \label{thm:ada_m_estimation}
    Suppose $X_1, \cdots, X_m$ are independent and identically distributed as $F$ (density $f$) which is an absolutely continuous unimodal distribution with a unique mode $\theta_0$. Then the confidence interval returned by \Cref{alg:ada_M} satisfies the following for all $n\ge1$, and for any $\alpha\in(0, 1)$,
      \[
        \mathbb{P}_F(\theta_0\in\widehat{\mathrm{CS}}_{n,\alpha}) \ge 1 - \alpha. 
        \]
    \end{theorem}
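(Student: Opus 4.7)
The plan is to imitate the proof of \Cref{thm:m_estimation}, but to accommodate the data-dependent bandwidth $\widehat{h}$ I would establish the validity of $\widehat{\mathrm{CS}}_{n,\alpha}^h$ \emph{simultaneously for every $h > 0$}, so that the adaptive choice $\widehat h$ inherits the coverage for free. The threshold in \Cref{alg:ada_M} is calibrated so that a single Dvoretzky--Kiefer--Wolfowitz (DKW) application to $S_2$ delivers exactly this uniform-in-$h$ statement.

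Concretely, I would first condition on $S_1$, rendering $\widehat{\theta}_1$ non-random relative to $S_2$. By DKW (with Massart's constant), with probability at least $1-\alpha$ over $S_2$,
\[
\sup_{t \in \mathbb{R}} |F_n(t) - F(t)| \;\leq\; \sqrt{\frac{\log(2/\alpha)}{2n}}.
\]
Because $m_{\theta;h}(X) = -(1/(2h))\mathbf{1}\{\theta - h < X \leq \theta + h\}$ is a half-normalised interval indicator, a short two-endpoint calculation shows that on this DKW event
\[
\sup_{\theta \in \mathbb{R},\, h > 0} \; h \cdot \bigl|(P_n - P)(m_{\theta;h} - m_{\widehat{\theta}_1;h})\bigr| \;\leq\; 2\sup_t |F_n(t)-F(t)| \;\leq\; \sqrt{\frac{2\log(2/\alpha)}{n}},
\]
which is exactly the $h$-scaled threshold appearing in \Cref{alg:ada_M}. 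Next I would let $\theta_h$ denote the mode of the convolution $F * \mathrm{Uniform}[-h,h]$, whose Lebesgue density is $f_h(x) = (F(x+h) - F(x-h))/(2h)$. Since $\theta_h$ maximises $f_h$, the population criterion satisfies $P(m_{\theta_h;h} - m_{\widehat{\theta}_1;h}) = -f_h(\theta_h) + f_h(\widehat{\theta}_1) \leq 0$; combining this with the uniform deviation bound yields $P_n(m_{\theta_h;h} - m_{\widehat{\theta}_1;h}) \leq (1/h)\sqrt{2\log(2/\alpha)/n}$ for \emph{every} $h > 0$ on the DKW event. Hence $\theta_h$ belongs to the core $\{\theta : P_n(m_{\theta;h} - m_{\widehat{\theta}_1;h}) \leq \text{threshold}\}$ of $\widehat{\mathrm{CS}}_{n,\alpha}^h$ simultaneously for every $h > 0$.

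Finally, as already exploited in the proof of \Cref{thm:m_estimation}, $\theta_0 \in (\theta_h - h, \theta_h + h)$ for every $h > 0$, so the $\pm h$ enlargement turns containment of $\theta_h$ in the core into containment of $\theta_0$ in $\widehat{\mathrm{CS}}_{n,\alpha}^h$. Specialising to $h = \widehat h$ gives $\theta_0 \in \widehat{\mathrm{CS}}_{n,\alpha}^{\widehat h} = \widehat{\mathrm{CS}}_{n,\alpha}$ with probability at least $1-\alpha$. The main obstacle I would expect to confront is precisely this uniform-in-$h$ concentration: the argument behind \Cref{thm:m_estimation} is only pointwise in $h$, whereas here $\widehat h$ is a random function of $S_2$ and forces control of the empirical criterion across the entire family of candidate bandwidths at once. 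The DKW step resolves this cleanly because $h \cdot (P_n - P)(m_{\theta;h})$ is bounded by twice the Kolmogorov--Smirnov statistic of $F_n$, which fully decouples the randomness from the index $(\theta, h)$ and makes the single $1-\alpha$ probabilistic budget suffice.
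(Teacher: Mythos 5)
Your proposal is correct and follows essentially the same route as the paper's own proof: both reduce the uniform-in-$h$ coverage to a single DKW application (the threshold $\tfrac{1}{h}\sqrt{2\log(2/\alpha)/n}$ in \Cref{alg:ada_M} is exactly $\tfrac{1}{h}$ times twice the Massart DKW radius, giving $\sup_{I\in\mathcal{I}}|P_n(I)-P(I)|\le\sqrt{2\log(2/\alpha)/n}$ with probability $1-\alpha$), combine it with $P(m_{\theta_h;h}-m_{\widehat\theta_1;h})\le 0$ from the optimality of $\theta_h$, and then use $\theta_0\in(\theta_h-h,\theta_h+h)$ and the $\pm h$ enlargement before specialising to $h=\widehat h$. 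Conditioning on $S_1$ is a harmless rephrasing that the paper leaves implicit, since the DKW bound on $S_2$ is already uniform over all intervals and therefore insensitive to $\widehat\theta_1$.
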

The proof of \Cref{thm:ada_m_estimation} is very similar to that of \Cref{thm:m_estimation}. We find a simultaneously valid confidence set of $\theta_h$ for all $h > 0$ and use that to obtain a valid confidence set of $\theta_0$. The detailed proof is provided in \Cref{app:ada_M_estimation}. 

As in the earlier sections we analyze the width of the confidence set $\widehat{\mathrm{CS}}_{n, \alpha}$ under assumption~\ref{assump:f} and show that using a minimax optimal estimator $\widehat \theta_1$ yields an optimal rate for $\mathrm{Width}(\widehat{\mathrm{CS}}_{n,\alpha})$. 
\begin{theorem}
\label{thm:ada_m_estimator_width}
Suppose $X_1, \cdots, X_{2n}$ are independent and identically distributed as $F$ (density $f$) which is an absolutely continuous unimodal distribution with a unique mode $\theta_0$ and which satisfies assumption~\ref{assump:f} with $\beta \geq 1$. Let $\widehat \theta_1$ be a minimax optimal estimator of $\theta_0$ i.e.\ $n^{1/(1 + 2\beta)}( \widehat \theta_1 - \theta_0) = O_P(1)$. Let $\ell(n)$ be any sequence that satisfies $\ell(n) \rightarrow \infty$ and $\ell(n) = o(n^{\tau})$ as $n \rightarrow \infty$ for any $\tau > 0$. Then we have, 
\[
n^{1/(1 + 2\beta)} \ell(n)^{-1}\mathrm{Width}(\widehat{\mathrm{CS}}_{n,\alpha}) = O_P(1) \quad \mbox{as}\quad n \rightarrow \infty. 
\]
\end{theorem}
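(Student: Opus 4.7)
The only new ingredient compared with \Cref{thm:m_estimator_width} is the adaptive choice of bandwidth, and the key point is that $\widehat{h}$ is defined as the \emph{minimizer} of the map $h \mapsto \mathrm{Width}(\widehat{\mathrm{CS}}_{n,\alpha}^{h})$. This yields the pathwise inequality
\[
\mathrm{Width}(\widehat{\mathrm{CS}}_{n,\alpha}) \;=\; \mathrm{Width}\bigl(\widehat{\mathrm{CS}}_{n,\alpha}^{\widehat h}\bigr) \;\leq\; \mathrm{Width}\bigl(\widehat{\mathrm{CS}}_{n,\alpha}^{h_n}\bigr) \qquad \text{for every deterministic } h_n > 0.
\]
Thus it suffices to exhibit one non-random $h_n$ at which the non-adaptive width attains the target rate; no further handling of the randomness in $\widehat h$ is required for the width (that randomness was already absorbed on the coverage side in \Cref{thm:ada_m_estimation}).

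Mirroring the choice made in \Cref{thm:m_estimator_width}, I would take $h_n = n^{-1/(1+2\beta)}\sqrt{\ell(n)}$. For this deterministic bandwidth the width analysis from \Cref{thm:m_estimator_width} carries over essentially verbatim to $\widehat{\mathrm{CS}}_{n,\alpha}^{h_n}$ as defined in \Cref{alg:ada_M}, because the threshold $h_n^{-1}\sqrt{2\log(2/\alpha)/n}$ appearing there is of the same order as the threshold $h_n^{-1}\sqrt{3/(2n)}[\sqrt{\log(1/\alpha)} + 2]$ of \Cref{alg:M_proposed-conf-int}. Concretely, writing $g_h(x) = (F(x+h)-F(x-h))/(2h)$ and $\theta_h$ for its mode, the three ingredients are: (i) a Taylor expansion of $g_{h_n}$ under Assumption~\ref{assump:f} with $\beta \ge 1$, giving $|\theta_{h_n}-\theta_0| = O(h_n)$ together with a quadratic lower bound on $M_{h_n}(\theta) - M_{h_n}(\theta_{h_n})$ near $\theta_{h_n}$; (ii) the standard M-estimation rate machinery (Theorem~3.2.5 of \cite{wellner2013weak}) applied to the class $\{m_{\theta;h_n}\}$; and (iii) the minimaxity $\widehat\theta_1 - \theta_0 = O_P(n^{-1/(1+2\beta)})$. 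Balancing these in the usual way, the diameter of the level set (before $\pm h_n$ inflation) is $O_P(n^{-1/(1+2\beta)}\sqrt{\ell(n)})$, and adding the $2h_n$ inflation does not change the order. This yields $\mathrm{Width}(\widehat{\mathrm{CS}}_{n,\alpha}^{h_n}) = O_P(n^{-1/(1+2\beta)}\,\ell(n))$, completing the proof via the pathwise inequality above.

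The main technical obstacle, already present in \Cref{thm:m_estimator_width}, is the quadratic curvature bound for $M_h$ at $\theta_h$ with the correct $h$-dependent constant. Under Assumption~\ref{assump:f} with $\beta \ge 1$, the local behaviour $f(\theta_0 + u) \asymp f(\theta_0) - c|u|^\beta$ gives, after convolution with the uniform kernel of width $2h$, $g_h(\theta_0+s) - g_h(\theta_0) \asymp -s^2 h^{\beta-2}$ for $|s| \le h$; the case $\beta = 1$ corresponds to a triangular peak with curvature of order $h^{-1}$, and $\beta = 2$ to a smooth parabolic peak with constant curvature. Verifying this scaling cleanly, together with ensuring that the peak shift $|\theta_{h_n}-\theta_0|$ is indeed only $O(h_n)$ (rather than the a priori $O(1)$ one gets from a crude symmetry argument), is the sole piece requiring genuine computation. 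Once these two facts are in hand, the conclusion $n^{1/(1+2\beta)}\ell(n)^{-1}\mathrm{Width}(\widehat{\mathrm{CS}}_{n,\alpha}) = O_P(1)$ follows directly from the minimizing property of $\widehat h$.
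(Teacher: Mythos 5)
Your proposal is correct and matches the paper's argument exactly: the paper also reduces to a deterministic bandwidth $h_{\mathrm{opt}} = \sqrt{\ell(n)}\,n^{-1/(1+2\beta)}$ via the pathwise inequality $\mathrm{Width}(\widehat{\mathrm{CS}}_{n,\alpha}) = \min_{h>0}\mathrm{Width}(\widehat{\mathrm{CS}}_{n,\alpha}^{h}) \le \mathrm{Width}(\widehat{\mathrm{CS}}_{n,\alpha}^{h_{\mathrm{opt}}})$, then re-runs the width analysis of \Cref{thm:m_estimator_width} with the DKW-type threshold $\frac{1}{h}\sqrt{2\log(2/\alpha)/n}$ in place of the trinomial threshold, which is of the same $O(h^{-1}n^{-1/2})$ order. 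The only cosmetic difference is that you phrase the curvature step via the convolved density $g_h$ ($g_h(\theta_0+s)-g_h(\theta_0)\asymp -s^2h^{\beta-2}$ for $|s|\le h$), while the paper works directly with $P(m_{\theta;h}-m_{\theta_0;h})\gtrsim hc_0|\theta-\theta_0|^\beta$ for $|\theta-\theta_0|\gtrsim h$ and simply absorbs the inner $O(h)$ ball; both yield the same rate.
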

\Cref{thm:ada_m_estimator_width} establishes that the rate of $\mathrm{Width}(\widehat{\mathrm{CS}}_{n,\alpha})$ departs from the minimax optimal rate $n^{-1/(1 + 2\beta)}$ by a factor of $\ell(n)$ which can be allowed to diverge with $n$ at an arbitrarily slow rate. As noted earlier in \Cref{subsec:os_approach}, the departure from the minimax optimal rate can be attributed to the adaptivity (yields ideal rate without any prior knowledge of $\beta$) which this confidence set provides. The proof of the rate of $\mathrm{Width}(\widehat{\mathrm{CS}}_{n,\alpha})$ re-traces the steps of the proof of \Cref{thm:m_estimator_width} and the details of the proof can be seen in \Cref{app:ada_M_width}. 

\subsection{Confidence set of mode using Edelman's result}
\label{subsec:edelman_res_method}
In this sub-section we use \cite{edelman1990confidence}'s result \eqref{eq:edelman} to construct a finite sample valid confidence set of the mode. The width of this confidence set does not shrink to $0$ as the sample size increases. Despite this, we cover this method because it is easy to understand. The steps of construction of this confidence set is described in \Cref{alg:proposed-conf-int_ed}.

\begin{algorithm}
    \caption{\textbf{(M3)} Proposed finite sample valid confidence set for the population mode}
    \label{alg:proposed-conf-int_ed}
  \KwIn{Sample: $X_1,\cdots,X_m$ and Confidence Level: $1-\alpha$}
    \KwOut{Confidence set for the population mode with finite sample coverage}
    Divide the sample into two almost equally sized disjoint sub-samples $S_1$ and $S_2$. Let $|S_1| = m - n$ and $|S_2| = n  $. \\
    Compute any consistent estimate $\widehat \theta_1$ of mode based on the sub-sample $S_1$. \\
    For any $\theta \in \mathbb{R}$ let, 
    \[
    p_i(\theta) = \frac{2}{1 + \left|\frac{X_i - \theta}{X_i - \widehat \theta_1} \right|} \mbox{ for all } X_i \in S_2. 
    \] \\
    Return the confidence set, 
    \[
    \widehat{\mathrm{CS}}_{n, \alpha}^{\mathrm{Ed}_p} = \left\{\theta \in \mathbb{R}\Big| -2\sum_{i: X_i \in S_2}\log(p_i(\theta)) < \chi^2( 1- \alpha, 2n) \right\}. 
    \]
    \end{algorithm}
\begin{theorem}
    \label{thm:ed_method_validity}
Suppose $X_1, \cdots, X_m$ are independent and identically distributed as $F$ (density $f$) which is an absolutely continuous unimodal distribution with a unique mode $\theta_0$. Then the confidence set returned by \Cref{alg:proposed-conf-int_ed} satisfies the following for all $n\ge1$ and for any $\alpha\in(0, 1)$,
  \[
    \mathbb{P}_F(\theta_0\in \widehat{\mathrm{CS}}_{n, \alpha}^{\mathrm{Ed}_p}) \ge 1 - \alpha.
    \]
\end{theorem}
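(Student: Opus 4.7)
The plan is to combine Edelman's single-observation bound \eqref{eq:edelman} with Fisher's method for pooling independent $p$-values, exploiting the sample-splitting structure of the algorithm. Because $S_1$ and $S_2$ are disjoint and the data are iid, the observations $\{X_i : X_i \in S_2\}$ are, conditionally on $\widehat\theta_1$, still iid from $F$ and independent of $\widehat\theta_1$; it therefore suffices to establish the coverage statement conditionally on $\widehat\theta_1$ and then integrate.

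The heart of the argument is to show that, for every $u \in (0,1]$ and every $i$ with $X_i \in S_2$,
\[
\mathbb{P}\bigl(p_i(\theta_0) \leq u \,\big|\, \widehat\theta_1 \bigr) \;\leq\; u,
\]
i.e.\ $p_i(\theta_0)$ is stochastically at least a $\mathrm{Uniform}(0,1)$. Rewriting $\{p_i(\theta_0) \leq u\}$ as $\{|X_i - \theta_0| \geq (2/u - 1)|X_i - \widehat\theta_1|\}$ and invoking \eqref{eq:edelman} with $a = \widehat\theta_1$ and level $u$ (conditionally on $\widehat\theta_1$) would deliver the desired super-uniformity. Once this is in hand, the variables $p_i(\theta_0)$ for $X_i \in S_2$ are iid super-uniform given $\widehat\theta_1$, so $-2\log p_i(\theta_0)$ is stochastically dominated by $\chi^2_2$, and by independence
\[
-2 \sum_{X_i \in S_2} \log p_i(\theta_0) \;\leq_{\mathrm{st}}\; \chi^2_{2n}
\]
conditionally on $\widehat\theta_1$. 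Comparing this to the $(1-\alpha)$-quantile of $\chi^2_{2n}$ yields $\mathbb{P}_F(\theta_0 \in \widehat{\mathrm{CS}}_{n,\alpha}^{\mathrm{Ed}_p} \mid \widehat\theta_1) \geq 1 - \alpha$, and taking expectation over $\widehat\theta_1$ closes the argument.

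The hardest part will be the super-uniformity step. The bound \eqref{eq:edelman} is stated asymmetrically, with a larger right slack $(2/u + 1)$ and smaller left slack $(2/u - 1)$, so the symmetric event $\{|X_i - \theta_0| \geq (2/u - 1)|X_i - \widehat\theta_1|\}$ is a superset of the ``non-coverage'' event that \eqref{eq:edelman} directly controls. To close this gap I would rely on a symmetric strengthening of Edelman's inequality, namely $\mathbb{P}(|X - \theta_0| \geq c|X - a|) \leq 2/(c+1)$ for any $c > 0$, which can either be read off the proof of \eqref{eq:edelman} (a Markov-type argument on the unimodal density) or be assembled from two one-sided pieces at appropriately chosen levels. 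Establishing this symmetric tail bound for unimodal distributions is the essential technical input; after that, Fisher's combining step and the conditioning argument are routine.
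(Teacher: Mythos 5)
Your proposal follows the same route as the paper: show that each $p_i(\theta_0)$ is super-uniform conditionally on $\widehat\theta_1$, use sample splitting to get conditional independence of the $p_i$'s over $S_2$, and then apply Fisher's combination rule. The ``symmetric strengthening'' $\mathbb{P}(|X - \theta_0| \geq c|X - a|) \leq 2/(c+1)$ that you flag as the essential technical input is exactly the form of Edelman's inequality the paper actually invokes (it appears as $\mathbb{P}(|X-\theta_0| \leq t|X-a|) \geq 1 - 2/(t+1)$ in the appendix proof of the companion \Cref{thm:E_ed_method_validity}, and the symmetric $(2/\alpha - 1)$ slack on both sides is written directly in the paper's proof of this theorem), so what you identify as the hardest step is already a standard form of the result rather than a gap you need to fill; with that substitution your argument matches the paper's.
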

\Cref{thm:ed_method_validity} establishes the finite sample validity of the confidence set proposed in \Cref{alg:proposed-conf-int_ed}. The theorem is proved by applying Edelman's result \eqref{eq:edelman} to all the points in the second sub-sample $S_2$ and showing that $p_i(\theta_0)$ is a valid p-value for all $X_i \in S_2$ i.e.\ $\mathbb{P}(p_i(\theta_0) \leq \alpha) \leq \alpha$ for any $\alpha \in (0,1)$ and all $X_i \in S_2$. Refer to \Cref{app:proof_ed_validity} for the detailed proof. 

We analyze the width of $\widehat{\mathrm{CS}}_{n, \alpha}^{\mathrm{Ed}_p}$ under the assumption that $\mathbb{E}_F[\log(1 + |(X-\theta)/(X - \theta_0)|] $ exists for all $\theta \in \mathbb{R}$ and show that unlike the confidence sets discussed in the previous sections, the width of $\widehat{\mathrm{CS}}_{n, \alpha}^{\mathrm{Ed}_p}$ does not converge to $0$ as $n$ increases. 
\begin{theorem}
\label{thm:ed_width}
Suppose $X_1, \cdots, X_m$ are independent and identically distributed as $F$ (density $f$) which is an absolutely continuous unimodal distribution with a unique mode $\theta_0$ such that $\mathbb{E}_F[\log(1 + |(X-\theta)/(X - \theta_0)|)] $ exists for all $\theta \in \mathbb{R}$. Let $\widehat \theta_1$ be a consistent estimator of $\theta_0$. Then we have, 
\[
\widehat{\mathrm{CS}}_{n, \alpha}^{\mathrm{Ed}_p} \stackrel{a.s.}{\rightarrow} \left\{\theta \in \mathbb{R} \Bigg| \mathbb{E}_F \left[ \log\left(1 +  \left|\frac{X - \theta}{X - \theta_0} \right| \right)\right] < 1+ \log 2\right\} \quad \mbox{as} \quad m-n,n \rightarrow \infty,
\]
\end{theorem}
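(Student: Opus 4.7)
The plan is to show, for each fixed $\theta \in \mathbb{R}$, that the random inequality defining $\widehat{\mathrm{CS}}_{n,\alpha}^{\mathrm{Ed}_p}$ converges almost surely to the deterministic inequality defining the claimed limit set, and then transfer this pointwise statement into set-level convergence (say in the Painlev\'e--Kuratowski sense).

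\textbf{Rewriting the constraint.} By the definition of $p_i(\theta)$,
\[
-\log p_i(\theta) \;=\; \log\!\left(1 + \left|\frac{X_i - \theta}{X_i - \widehat\theta_1}\right|\right) - \log 2,
\]
so the inequality $-2\sum_{i\in S_2}\log p_i(\theta) < \chi^2(1-\alpha,2n)$ is, after dividing by $2n$, equivalent to
\[
\frac{1}{n}\sum_{i\in S_2}\log\!\left(1+\left|\frac{X_i-\theta}{X_i-\widehat\theta_1}\right|\right) \;<\; \log 2 \;+\; \frac{\chi^2(1-\alpha,2n)}{2n}.
\]
The right-hand side is easy: a $\chi^2_{2n}$ variable has mean $2n$ and variance $4n$, so $\chi^2(1-\alpha,2n) = 2n + O(\sqrt n)$ and hence $\chi^2(1-\alpha,2n)/(2n)\to 1$. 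Thus the cutoff tends to $1+\log 2$, which is exactly the threshold appearing on the right-hand side of the claimed limit set.

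\textbf{Limit of the empirical average.} Since $S_1$ and $S_2$ are independent, condition on $S_1$; given $S_1$, $\widehat\theta_1$ is a constant and $\{X_i\}_{i\in S_2}$ is i.i.d.\ $F$. The strong law of large numbers combined with the stated existence of $\mathbb{E}_F[\log(1 + |(X-\theta)/(X-c)|)]$ gives pointwise a.s.\ convergence for each fixed $c$. Plugging in the random $\widehat\theta_1$ requires a uniform law of large numbers over $c$ in a neighborhood $V$ of $\theta_0$, applied to the function class $\mathcal{F}_\theta = \{x\mapsto \log(1+|(x-\theta)/(x-c)|) : c\in V\}$. Once the Glivenko--Cantelli property of $\mathcal{F}_\theta$ is established and $\widehat\theta_1$ is eventually in $V$ (by consistency), we obtain
\[
\frac{1}{n}\sum_{i\in S_2}\log\!\left(1+\left|\frac{X_i-\theta}{X_i-\widehat\theta_1}\right|\right) \;\xrightarrow{\text{a.s.}}\; \mathbb{E}_F\!\left[\log\!\left(1+\left|\frac{X-\theta}{X-\theta_0}\right|\right)\right].
\]

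\textbf{From pointwise to set convergence.} Combining the two limits, for each fixed $\theta$ the indicator $\mathbf{1}\{\theta\in\widehat{\mathrm{CS}}_{n,\alpha}^{\mathrm{Ed}_p}\}$ converges almost surely to $\mathbf{1}\{\mathbb{E}_F[\log(1+|(X-\theta)/(X-\theta_0)|)]<1+\log 2\}$, except perhaps on the (typically negligible) boundary where equality holds. To upgrade pointwise convergence of membership to Painlev\'e--Kuratowski set convergence, I would use continuity of the map $\theta\mapsto \mathbb{E}_F[\log(1+|(X-\theta)/(X-\theta_0)|)]$, which follows from dominated convergence under the given integrability assumption, together with the fact that the limit inequality is strict.

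\textbf{Main obstacle.} The technical heart of the proof lies in justifying the swap of $\widehat\theta_1$ for $\theta_0$ in the denominator. The integrand has a logarithmic singularity at $X=\widehat\theta_1$, and although $\log|x-c|$ is locally integrable under a bounded density, one must ensure that empirical contributions from samples $X_i$ near the moving point $\widehat\theta_1$ do not distort the limit. A clean remedy is an envelope argument: find an $F$-integrable envelope for $\mathcal{F}_\theta$ by writing $\log(1+|(x-\theta)/(x-c)|) = \log(|x-c|+|x-\theta|) - \log|x-c|$, bounding the first term by a constant plus $\log(1+|x|)$ uniformly in $c\in V$, and bounding $-\log|x-c|$ from above by $\sup_{c\in V}(-\log|x-c|)$, which is integrable once $V$ is compact and the density is bounded in a neighborhood of $\theta_0$. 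A secondary subtlety is that ``consistent'' is often meant in probability; to obtain the a.s.\ conclusion one either reads the hypothesis as strong consistency or argues along a.s.\ convergent subsequences.
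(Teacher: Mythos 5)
Your proposal follows exactly the route of the paper's proof: rewrite the membership condition as an empirical mean $\frac{1}{n}\sum_{i\in S_2}\log\bigl((1+|(X_i-\theta)/(X_i-\widehat\theta_1)|)/2\bigr)$ compared against $\chi^2(1-\alpha,2n)/(2n)$, apply the strong law of large numbers to the left side, and note that the right side tends to $1$. The only difference is that you are more careful than the paper about the two points it glosses over. The paper simply asserts the a.s.\ convergence of the empirical mean ``by strong law of large numbers (we also use the fact that $\widehat\theta_1$ is a consistent estimator of $\theta_0$)'' without addressing the moving singularity at $X=\widehat\theta_1$, whereas you (correctly) handle the substitution $\widehat\theta_1\mapsto\theta_0$ via an integrable envelope and a uniform law of large numbers over a neighborhood $V\ni\theta_0$; and you also spell out what ``almost-sure convergence of sets'' should mean (Painlev\'e--Kuratowski) and how to pass from pointwise convergence of indicators to it using continuity of $\theta\mapsto\E_F[\log(1+|(X-\theta)/(X-\theta_0)|)]$. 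Your remark that the hypothesis must be read as strong consistency of $\widehat\theta_1$ (or argued along a.s.\ convergent subsequences) to get an a.s.\ conclusion is likewise a genuine subtlety the paper does not mention. So this is not a different proof, but a more rigorous rendering of the same one.
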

The proof of \Cref{thm:ed_width} follows by an application of the law of large numbers. For the complete proof, refer to \Cref{app:proof_ed_width}. \Cref{thm:ed_width} states that as the sample size increases $\widehat{\mathrm{CS}}_{n, \alpha}^{\mathrm{Ed}_p}$ shrinks to a limiting set and hence the width of $\widehat{\mathrm{CS}}_{n, \alpha}^{\mathrm{Ed}_p}$ stays bounded away from $0$. 

In \Cref{app:auxi} we provide an alternative method of constructing confidence set for the mode based on Edelman's result \eqref{eq:edelman} that stays valid even when $X_1, \cdots, X_m$ are not independent.  
\subsection{Extension to $\gamma$-Unimodal (multivariate) distributions}
\label{subsec:gamma_unimodal_distr}
In this sub-section we demonstrate the process of  obtaining confidence sets of the mode of $\gamma$-unimodal (multivariate) distribution utilizing the algorithms developed in the earlier sub-sections. Before going into $\gamma$-unimodal distributions, we state a couple of properties by which we can characterize unimodal univariate distributions. 
\begin{result}[Theorem 1.3 \cite{dharmadhikari1988unimodality}]
\label{res:uz_unimode}
A distribution function $F$ on $\mathbb{R}$ is unimodal about $0$ iff there exists independent random variables $U$ and $Z$ such that $U$ is uniform on $(0,1)$ and the product $UZ$ has distribution function $F$.
\end{result}
\begin{result}[Theorem 1.4 \cite{dharmadhikari1988unimodality}]
\label{res:g_unimode}
A random variable $X$ has a unimodal distribution about $0$ iff $t\mathbb{E}[g(tX)]$ is non-decreasing in $t> 0$ for every bounded, nonnegative,
Borel-measurable function $g$. 
\end{result}
The characterizing property in \Cref{res:g_unimode} is used to define $\gamma$-unimodal distributions (see \cite{olshen1970generalized}, \cite{kolyvakis2023multivariate}), 
\begin{definition}
 \label{def:alpha_unimodal}  
A random $d$-dimensional vector $X \in \mathbb{R}^d$ is said to have a $\gamma$-unimodal distribution ($\gamma > 0$)  about 0 if, for every bounded, nonnegative, Borel measurable function $g: \mathbb{R}^d \rightarrow \mathbb{R}$ the quantity $t^{\gamma} \mathbb{E}[g(tX)]$ is non-decreasing in $t \in (0, \infty)$. 
\end{definition}
The class of $\gamma$-Unimodal distributions about $0$ can be characterized by a similar property as that stated in \Cref{res:uz_unimode}. 
\begin{result}[\cite{olshen1970generalized}]
\label{res:uz_gamma_modal}
A random $d$-dimensional vector $X \in \mathbb{R}^d$ has $\gamma$-unimodal distribution about $0$ iff $X$ is distributed as $U^{1/\gamma}Z$ where $U$ is uniform on $(0,1)$ and $Z$ is independent of $U$. 
\end{result}
To state the next property of $\gamma$-unimodal distributions, which we shall use to construct valid confidence sets of mode, we need the definition of a Minkowski functional. The Minkowski functional $\pi_S(x):\mathbb{R}^d \rightarrow \mathbb{R}$ of the set $S \subset \mathbb{R}^d$ is defined as, 
\[
\pi_S(x) \coloneqq \inf\{a > 0: x \in aS\} \quad \forall \mbox{ } x \in \mathbb{R}^d. 
\]
Note that if we take $S = \{x \in \mathbb{R}^d: \|x\|_2 \leq 1\}$, then $\pi_S(x) = \|x\|_2$. Thus the norm $x \rightarrow \|x\|_2$ is an example of a Minkowski functional. The following property of Minkowski functionals can be checked easily. 
\begin{result}[Proposition 2.3 \cite{dasgupta1995new}]
    \label{res:minkowski}
Given a star-shaped set $S \subset \mathbb{R}^d$ about $0$ (i.e. $x \in S$ implies that all the points in the line segment joining $x$ and $0$ are in $S$), the Minkowski functional $\pi_S(\cdot)$ of the set $S$ is homogeneous of degree $1$ i.e.
\[
\pi_S(cx) = c \pi_S(x) \quad \mbox{for all $c \geq 0$, for all $x \in \mathbb{R}^d$}. 
\]
\end{result}
Using \Cref{res:minkowski} we show the following property of $\gamma$-unimodal distributions. 
\begin{lemma}
\label{lem:minkow_gamma}
    Suppose $X \in \mathbb{R}^d$ follows a $\gamma$-unimodal distribution about $\theta_0 \in \mathbb{R}^d$. Consider a star-shaped set $S \subset \mathbb{R}^d$ about $0$ and the corresponding Minkowski functional $\pi_S(\cdot)$. Then $[\pi_S(X - \theta_0)]^{\gamma}$ follows a unimodal distribution about $0$ on $\mathbb{R}$. 
\end{lemma}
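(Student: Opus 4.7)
The plan is to combine the randomization characterization of $\gamma$-unimodality from \Cref{res:uz_gamma_modal} with the degree-one homogeneity of the Minkowski functional from \Cref{res:minkowski}, and then recognize the resulting univariate random variable as being of Khintchine product form, so that \Cref{res:uz_unimode} delivers unimodality about $0$ on $\mathbb{R}$.

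First I would reduce to the centered case by interpreting $X$ being $\gamma$-unimodal about $\theta_0$ as $X - \theta_0$ being $\gamma$-unimodal about $0$ in the sense of \Cref{def:alpha_unimodal}. Then \Cref{res:uz_gamma_modal} furnishes a representation
\[
X - \theta_0 \stackrel{d}{=} U^{1/\gamma} Z,
\]
with $U \sim \mathrm{Uniform}(0,1)$ and $Z \in \mathbb{R}^d$ independent of $U$. Since $S$ is star-shaped about $0$ and $U^{1/\gamma} \geq 0$ almost surely, \Cref{res:minkowski} applied with the nonnegative scalar $c = U^{1/\gamma}$ gives $\pi_S(U^{1/\gamma} Z) = U^{1/\gamma} \pi_S(Z)$. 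Raising both sides to the power $\gamma$ produces
\[
[\pi_S(X - \theta_0)]^{\gamma} \stackrel{d}{=} U \cdot [\pi_S(Z)]^{\gamma}.
\]

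Now $[\pi_S(Z)]^{\gamma}$ is a real-valued random variable, and by construction it is independent of $U$. Therefore the right-hand side is exactly the product of a $\mathrm{Uniform}(0,1)$ variable and an independent real random variable, which is the product form appearing in \Cref{res:uz_unimode}. That result then identifies the distribution of $[\pi_S(X - \theta_0)]^{\gamma}$ as unimodal about $0$ on $\mathbb{R}$, completing the argument.

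The main obstacle is really conceptual rather than technical: once one has the decomposition $X - \theta_0 \stackrel{d}{=} U^{1/\gamma} Z$, the proof collapses to a two-line calculation exploiting the degree-one homogeneity of $\pi_S$ on the positive ray. No regularity assumptions on $Z$ or on $S$ beyond star-shapedness about $0$ are needed, and the translation step is purely definitional.
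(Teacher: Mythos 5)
Your argument is correct and is essentially identical to the paper's: both center via $X-\theta_0$, invoke the Khintchine-type product representation $X-\theta_0 \stackrel{d}{=} U^{1/\gamma}Z$ from \Cref{res:uz_gamma_modal}, pull $U^{1/\gamma}$ through $\pi_S$ by degree-one homogeneity, and read off unimodality of $U[\pi_S(Z)]^{\gamma}$ from \Cref{res:uz_unimode}. (Minor note: you correctly cite \Cref{res:uz_gamma_modal} for the decomposition, whereas the paper's proof references \Cref{res:g_unimode} at that step, which appears to be a citation slip.)
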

\begin{proof}[Proof of \Cref{lem:minkow_gamma}]
Since $X$ follows a $\gamma$-unimodal distribution about $\theta_0$, $X - \theta_0$ follows $\gamma$-unimodal distribution about $0$. By \Cref{res:g_unimode}, there exists a uniform-$(0,1)$ random variable $U$ and a random vector $Z \in \mathbb{R}^d$ independent of $U$ such that $X - \theta_0 = U^{1/\gamma} Z$. From \Cref{res:minkowski} we know that $\pi_S(\cdot)$ is homogeneous of degree $1$. This implies that $\pi_S(X - \theta_0) = \pi_S(U^{1/\gamma} Z) = U^{1/\gamma} \pi_S(Z) $. In other words, $[\pi_S(X - \theta_0)]^{\gamma} = U [\pi_S(Z)]^{\gamma}$ where $U$ is a uniform-$(0,1)$ random variable independent of the random variable $[\pi_S(Z)]^{\gamma}$. Using the characterization in \Cref{res:uz_unimode}, we conclude that $[\pi_S(X - \theta_0)]^{\gamma}$ follows unimodal distribution about $0$ on $\mathbb{R}$. 
\end{proof}
An immediate consequence of \Cref{lem:minkow_gamma} is that if $X$ follows a $\gamma$-unimodal distribution about $\theta_0$, then $\|X - \theta_0\|_2^{\gamma}$ follows a unimodal univariate distribution on $\mathbb{R}$ with mode $0$. Thus we can apply the algorithms for finding the mode of a unimodal univariate distribution (\Cref{alg:proposed-conf-int_os}, \Cref{alg:M_proposed-conf-int}, \Cref{alg:ada_M}, \Cref{alg:proposed-conf-int_ed}) to the transformed data points $\{\|X_i - \theta_0\|_2^{\gamma} \}_{i = 1}^n$ to obtain valid confidence sets for $\theta_0$. We illustrate this algorithm in \Cref{alg:gamma-conf-int_ed}. 
\begin{algorithm}
    \caption{\textbf{(M4)} Proposed finite sample valid confidence set for the population mode of a $d$-dimensional $\gamma$-unimodal distribution}
    \label{alg:gamma-conf-int_ed}
    \KwIn{Sample: $X_1,\cdots,X_n$, Confidence Level: $1-\alpha$, an algorithm $\mathcal{A}(\{y_i\}_{i = 1}^n, 1 - \alpha)$ for constructing a valid $(1-\alpha)$ confidence set of the mode of the univariate data $\{y_i\}_{i = 1}^n$}
    \KwOut{Confidence set for the population mode with finite sample coverage}
   For any $\theta \in \mathbb{R}$ let, 
   \[
   \widehat{\mathrm{CS}}_{n,\alpha}^{\theta; \gamma} = \mathcal{A}(\{\|X_i - \theta\|_2^{\gamma}\}_{i = 1}^n, 1 - \alpha) . 
   \]
    %Define the following set of order statistics,
    %\[
    %x_i = X_{(1+(i-1)2^{s_n})} \quad \mbox{for } i=1,2,\cdots, m = \left \lfloor \frac{n-1}{2^{s_n}} \right \rfloor +1.
    %\] \\
     \\
 Return the confidence set, 
    \begin{equation*}
    \begin{split}
   \widehat{\mathrm{CS}}_{n,\alpha}^{ \gamma} =& \left\{\theta \in \mathbb{R}^d\Bigg|  0 \in \widehat{\mathrm{CS}}_{n,\alpha}^{\theta; \gamma}   \right\}. 
    \end{split}
    \end{equation*}
    \end{algorithm}
\begin{theorem}
    \label{thm:gamma_ed_method_validity}
Suppose $X_1, \cdots, X_n$ are independent and identically distributed as $F$ (density $f$) which is an absolutely continuous $\gamma$-unimodal distribution on $\mathbb{R}^d$ with a unique mode $\theta_0 \in \mathbb{R}^d$. Then the confidence set returned by \Cref{alg:gamma-conf-int_ed} satisfies the following for all $n\ge1$ and for any $\alpha\in(0, 1)$,
  \[
    \mathbb{P}_F(\theta_0\in\widehat{\mathrm{CS}}_{n,\alpha}^{\gamma}) \ge 1 - \alpha.
    \]
\end{theorem}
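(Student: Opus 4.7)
The plan is to reduce the multivariate problem to the univariate one that has already been handled, using the transformation $X \mapsto \|X-\theta\|_2^\gamma$ as a pivot. First I would observe that the Euclidean ball $S = \{x \in \mathbb{R}^d : \|x\|_2 \le 1\}$ is star-shaped about $0$ and its Minkowski functional is exactly $\pi_S(\cdot) = \|\cdot\|_2$. Therefore, applying Lemma~\ref{lem:minkow_gamma} to $X_i$ (viewed as a sample from the $\gamma$-unimodal law about $\theta_0$), the random variable $Y_i \defeq \|X_i - \theta_0\|_2^\gamma$ follows a unimodal univariate distribution on $\mathbb{R}$ with mode $0$. Since the $X_i$ are iid, the $Y_i$ are also iid, as they are obtained by applying a common deterministic map to the $X_i$.

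Next I would invoke the assumed guarantee of the input algorithm $\mathcal{A}$. Because $\{Y_i\}_{i=1}^n$ is an iid sample from a unimodal univariate distribution with mode $0$, the finite-sample validity of $\mathcal{A}$ yields
\[
\mathbb{P}_F\left(0 \in \mathcal{A}(\{Y_i\}_{i=1}^n, 1-\alpha)\right) \ge 1 - \alpha.
\]
By the definition of $\widehat{\mathrm{CS}}_{n,\alpha}^{\theta;\gamma}$ in~\Cref{alg:gamma-conf-int_ed}, the left-hand event is exactly $\{0 \in \widehat{\mathrm{CS}}_{n,\alpha}^{\theta_0;\gamma}\}$.

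The final step is to convert this into a statement about $\theta_0$ lying in the returned set. By construction,
\[
\widehat{\mathrm{CS}}_{n,\alpha}^{\gamma} = \left\{\theta \in \mathbb{R}^d : 0 \in \widehat{\mathrm{CS}}_{n,\alpha}^{\theta;\gamma}\right\},
\]
so $\theta_0 \in \widehat{\mathrm{CS}}_{n,\alpha}^{\gamma}$ if and only if $0 \in \widehat{\mathrm{CS}}_{n,\alpha}^{\theta_0;\gamma}$. Combining this with the previous display gives $\mathbb{P}_F(\theta_0 \in \widehat{\mathrm{CS}}_{n,\alpha}^{\gamma}) \ge 1 - \alpha$, which is the desired conclusion.

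There is essentially no hard step: the proof is a direct application of Lemma~\ref{lem:minkow_gamma} combined with the "test inversion" definition of $\widehat{\mathrm{CS}}_{n,\alpha}^{\gamma}$. The only place that requires a small amount of care is verifying that the transformed sample $\{Y_i\}$ meets the precise hypotheses required by $\mathcal{A}$ (iid from a unimodal law with known nominal mode $0$), and this follows immediately from Lemma~\ref{lem:minkow_gamma} applied at $\theta = \theta_0$. Note that the argument never uses absolute continuity of $F$, so the theorem in fact extends to the slightly more general setting of \Cref{res:g_unimode}; the absolute continuity hypothesis is only needed insofar as it is required by the input algorithm $\mathcal{A}$.
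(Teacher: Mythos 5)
Your proof is correct and follows essentially the same route as the paper: apply Lemma~\ref{lem:minkow_gamma} at $\theta=\theta_0$ to transform to an iid univariate unimodal sample with mode $0$, invoke the validity of $\mathcal{A}$, and unwind the test-inversion definition of $\widehat{\mathrm{CS}}_{n,\alpha}^{\gamma}$. Your closing observation that absolute continuity of $F$ is not used is also accurate and matches what the paper's argument actually needs.
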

\Cref{thm:gamma_ed_method_validity} shows that the proposed confidence set $\widehat{\mathrm{CS}}_{n,\alpha}^{\gamma}$ in \Cref{alg:gamma-conf-int_ed} is finite-sample valid. The proof of this theorem hinges on the fact that the transformed data $\{\|X_i - \theta_0\|_2^{\gamma} \}_{i = 1}^n$ are iid observations from a unimodal univariate distribution with mode $0$. Refer to \Cref{app:gamma_validity} for the detailed proof.

\begin{remark}
In \Cref{alg:gamma-conf-int_ed} we use the transformation $\{\|X_i - \theta\|_2^\gamma\}_{i = 1}^n$ to construct a valid confidence set of the mode. However a valid confidence set of the population mode of a $d$-dimensional $\gamma$-unimodal distribution can also be constructed using the transformed data $\{\pi_S(X_i - \theta)^{\gamma}\}_{i = 1}^n$ where $S \subset \mathbb{R}^d$ is any star-shaped set about $0$ and $\pi_S$ is the corresponding Minkowski functional. 
\end{remark}

\begin{remark}
In general, the optimality of the univariate algorithm $\mathcal{A}(\cdot, \cdot)$ does not imply the minimiax optimal shrinkage to $0$ of the width of the confidence set $\widehat{\mathrm{CS}}_{n,\alpha}^{\gamma}$. 
\end{remark}

\section{Numerical results}
\label{sec:simulation}
In this section we apply the proposed methods in this work to synthetic data and analyze the validity and width of the different confidence sets. We simulate observations from the density $f_{\beta}$, 
\begin{equation*}
                        f_{\beta}(x) = \begin{cases}
                            \frac{-|x|^{\beta}}{2} + \frac{1}{2}, \mbox{ }& - 1 \leq x \leq 0;\\
                            \frac{-\beta^{\beta}x^{\beta}}{2(\beta + 2)^{\beta}} + \frac{1}{2}, \mbox{ } & 0 \leq x \leq \frac{\beta + 2}{\beta};\\
                            0, \mbox{ }&\mbox{otherwise}.
                        \end{cases}
                    \end{equation*}
The density $f_{\beta}$ (for $\beta = 1$) has been studied in Section-$3$ of \cite{sager1975consistency}. The density $f_{\beta}$ satisfies assumption~\ref{assump:f}. Moreover, it can be easily verified that, 
 \[
                    \alpha\left(\delta, 2^{\frac{1}{\beta}}, \frac{2^{\frac{1}{\beta}}(\beta + 2)}{\beta} \right) = \frac{1 - \delta^{\beta}}{1 - 2\delta^{\beta}}   > 1 + \delta^{\beta}\mbox{ for any small } \delta > 0.
                    \]
We check the validity (coverage) and compare the performance (width) of the order statistics based approach (M1), the M-estimation method (M2), the p-value based algorithm which utilizes Edelman's result (M3), and the oracle method (which uses the asymptotic distribution of the estimate of mode). We perform the analysis for different sample sizes $n = 1000, 2000$ and for a range of $\beta$'s. For each sample size and each $\beta$, we perform $1000$ iterations to compute the coverage and the distribution of the width of the confidence sets at level $ 1- \alpha = 0.95$. 
\begin{figure}[h]
                        \centering
                        \includegraphics[width=\textwidth]{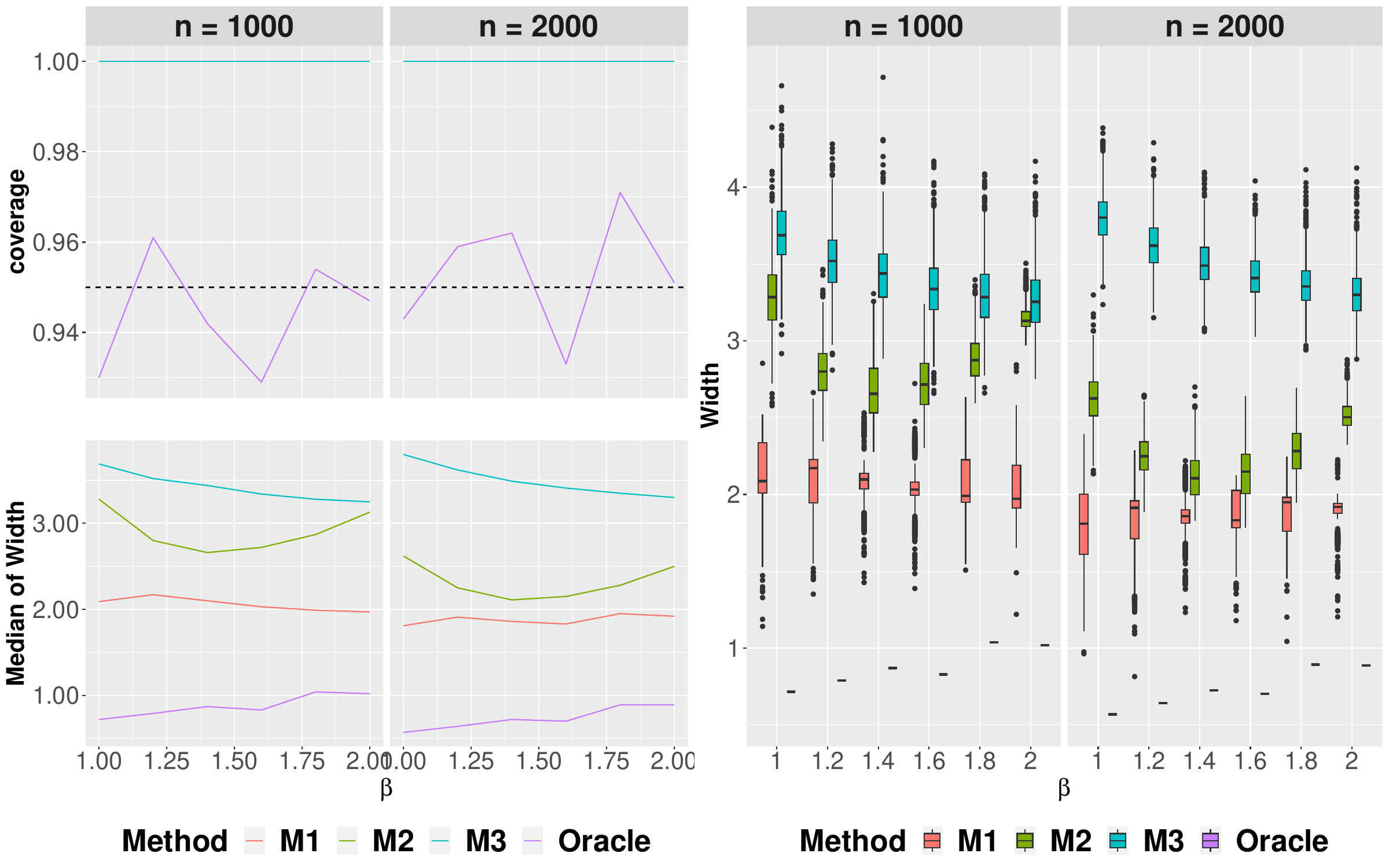}
                        \caption{Comparison of coverage and width of various confidence sets of mode of $f_{\beta}$ for different sample sizes and smoothness parameter, $\beta$.}
                        \label{fig:coverage_guarantee}
                    \end{figure} 
                    
\Cref{fig:coverage_guarantee} demonstrates the performance of the proposed finite sample valid confidence sets of mode of a unimodal distribution. It can be observed from the left panel of the figure that the finite sample valid methods (M1), (M2), (M3) are conservative and always provide the required coverage ($\geq 0.95$) for all sample sizes and all $\beta$ in contrast to the asymptotically valid oracle method. The ``median of width" plot in the left panel and the box plot of width in the right panel reveal that the width of the confidence sets given by (M1) and (M2) gradually decrease as the sample size increases and approach the width of the oracle method. On the contrary, the width of the confidence set given by the p-value based method (M3) doesn't seem to decrease with increasing sample size. All these observations align with the theoretical results discussed in the prior sections. 
\section{Conclusions and Future Directions}
\label{sec:extension}
In this work, we address the problem of constructing finite-sample valid confidence sets for the mode of a unimodal distribution that shrink to a singleton as sample size increases—a task that has received comparatively little attention in the literature relative to point estimation. Building upon classical ideas, we introduce a suite of novel procedures that yield valid confidence sets under minimal assumptions. Our proposed methods leverage spacings between order statistics, M-estimation techniques, and single-observation-based inequalities [\cite{edelman1990confidence}] to achieve finite-sample coverage guarantees. A salient feature of these procedures is their adaptivity to the underlying distribution: under suitable regularity conditions on the density around the mode, we show that the width of the confidence sets contracts at a rate that is minimax optimal (up to logarithmic factors), without requiring prior knowledge of the density’s smoothness.

Furthermore, we extend our framework to the multivariate setting by providing a generic methodology for obtaining finite-sample valid confidence sets for the mode of \(\gamma\)-unimodal distributions in \(\mathbb{R}^d\), using algorithms developed for the univariate case as building blocks. Our theoretical results are complemented by numerical experiments on synthetic datasets. Collectively, our findings contribute new tools for inference on the mode and provide a foundation for future developments in finite-sample uncertainty quantification for statistical functionals.

There are various future directions to this work. Firstly, it is of interest to construct a less conservative finite sample valid confidence interval of the mode of a unimodal univariate distribution with coverage $1 - \alpha$ and whose width shrinks to zero at exactly the minimax optimal rate as the sample size $n \rightarrow \infty$. Secondly, obtaining a finite sample analysis of the width of the confidence set of the mode of a multivariate \(\gamma\)-unimodal distribution [\Cref{alg:gamma-conf-int_ed}] is also of interest. 

\bibliography{references}
\bibliographystyle{plainnat}
\newpage
\appendix
\setcounter{section}{0}
\setcounter{equation}{0}
\setcounter{figure}{0}
\setcounter{remark}{0}
\renewcommand{\thesection}{S.\arabic{section}}
\renewcommand{\theequation}{E.\arabic{equation}}
\renewcommand{\thefigure}{A.\arabic{figure}}
\renewcommand{\theremark}{R.\arabic{remark}}
% \tableofcontents
% \titlelabel{\thetitle: }
% \cftsetindents{section}{1em}{2.5em}
% \cftsetindents{subsection}{1.5em}{3em}
% \setcounter{page}{1}
  \begin{center}
  \Large {\bf Appendix to ``Finite sample valid confidence sets of mode''}
  \end{center}
\section{Finite sample valid confidence set of mode of dependent data}
\label{app:auxi}
We often encounter scenarios where we can not assume the independence of data-points. There are various dependent structures which may occur -- negatively associated (NA), negatively superadditive dependent (NSD), negatively orthant dependent (NOD), extended negatively dependent (END) etc. Refer to \cite{joag1983negative}, \cite{hu2000negatively}, \cite{lehmann2011some}, \cite{liu2009precise} for a more detailed discussion on these dependent structures. It can be shown that all these dependent structures mentioned above fall under the broader category of widely orthant dependent (WOD) data (\cite{wang2013uniform}). A given sequence of random variables $\{X_N\}_{N \geq 1}$ is said to be WOD with dominating coeffients $g(N) = \max\{g_L(N), g_U(N) \}$ if there exists sequences $\{g_L(N)\}_{N\geq 1}, \{g_U(N)\}_{N\geq 1} \geq 1$ such that for any $x_i \in ( -\infty, \infty), $ $ i \leq N$, 
\begin{equation*}
    \begin{split}
        \mathbb{P}(X_1 > x_1, \cdots, X_N > x_N) &\leq g_U(N) \prod_{i = 1}^N \mathbb{P}(X_i > x_i), \\
       \mathbb{P}(X_1 \leq x_1, \cdots, X_N \leq x_N) &\leq g_L(N) \prod_{i = 1}^N \mathbb{P}(X_i \leq x_i) .
    \end{split}
\end{equation*}
These dependent structures often arise in censored and truncated data which are very common in survival analysis, reliability theory, astronomy and economics. \cite{kaber2024kernel} is one of the papers which studies the consistency of the kernel mode estimator for truncated WOD data. In this section we leverage Edelman's result \eqref{eq:edelman} to construct a finite sample valid confidence set of the mode of a unimodal univarite distribution based on data which may have arbitrary dependency between the data-points. The method is described in \Cref{alg:E-conf-int_ed}. 

\begin{algorithm}
    \caption{\textbf{(M3')} Proposed finite sample valid confidence set for the population mode}
    \label{alg:E-conf-int_ed}
  \KwIn{Sample: $X_1,\cdots,X_m$, Confidence Level: $1-\alpha$, tuning parameter $\rho > 1$}
    \KwOut{Confidence set for the population mode with finite sample coverage}
    Divide the sample into two almost equally sized disjoint sub-samples $S_1$ and $S_2$. Let $|S_1| = m - n$ and $|S_2| = n  $. \\
    Compute any consistent estimate $\widehat \theta_1$ of mode based on the sub-sample $S_1$. \\
    Return the confidence set, 
    \[
    \widehat{\mathrm{CS}}_{n, \alpha}^{\mathrm{Ed}_E, \rho} = \left\{\theta \in \mathbb{R}\Bigg| \frac{1}{n}\left(\frac{\rho - 1}{\rho + 1 } \right) \sum_{i \in S_2}\left|\frac{X_i - \theta}{X_i - \widehat \theta_1} \right|^{1/\rho} < \frac{1}{\alpha} \right\}.
    \]
    \end{algorithm}
\begin{theorem}
    \label{thm:E_ed_method_validity}
Suppose $X_1, \cdots, X_m$ are identically distributed as $F$ (density $f$) which is an absolutely continuous unimodal distribution with a unique mode $\theta_0$. Then the confidence set returned by \Cref{alg:proposed-conf-int_ed} satisfies the following for all $n\ge1$, for any $\alpha\in(0, 1)$ and for any $\rho > 1$,
  \[
    \mathbb{P}_F(\theta_0\in \widehat{\mathrm{CS}}_{n, \alpha}^{\mathrm{Ed}_E, \rho}) \ge 1 - \alpha.
    \]
\end{theorem}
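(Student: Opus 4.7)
}
The plan is to convert Edelman's one-observation inequality \eqref{eq:edelman} into a moment bound on the per-observation ratio $|(X_i - \theta_0)/(X_i - \widehat\theta_1)|^{1/\rho}$ and then apply Markov's inequality to the empirical average appearing in the definition of $\widehat{\mathrm{CS}}_{n,\alpha}^{\mathrm{Ed}_E,\rho}$. Crucially, Markov's inequality applied to a sum only needs control of each summand's expectation, so no independence across $i \in S_2$ is required beyond what Edelman's result itself needs.

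First I would invoke Edelman's inequality, in the p-value form used in the proof of \Cref{thm:ed_method_validity}, with the reference point chosen as $\widehat\theta_1$ (conditioning on $S_1$ to freeze $\widehat\theta_1$). For each $i \in S_2$ and every $t \ge 1$ this yields
\[
\mathbb{P}\!\left(\left|\frac{X_i - \theta_0}{X_i - \widehat\theta_1}\right| \ge t\right) \le \frac{2}{1 + t}.
\]
Integrating this tail bound for $\rho > 1$ gives the moment bound
\[
\mathbb{E}\!\left[\left|\frac{X_i - \theta_0}{X_i - \widehat\theta_1}\right|^{1/\rho}\right] = \int_0^\infty \mathbb{P}\!\left(\left|\frac{X_i - \theta_0}{X_i - \widehat\theta_1}\right| > s^{\rho}\right) ds \le 1 + \int_1^\infty \frac{2}{1+s^{\rho}}\, ds \le \frac{\rho + 1}{\rho - 1},
\]
where I bound $\mathbb{P}(\cdot) \le 1$ on $[0,1]$ and $1+s^{\rho} \ge s^{\rho}$ on $[1,\infty)$; this is the only place the hypothesis $\rho > 1$ enters.

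Writing $T := \tfrac{\rho - 1}{\rho + 1} \cdot \tfrac{1}{n} \sum_{i \in S_2} |(X_i - \theta_0)/(X_i - \widehat\theta_1)|^{1/\rho}$, the event $\theta_0 \in \widehat{\mathrm{CS}}_{n,\alpha}^{\mathrm{Ed}_E, \rho}$ is exactly $\{T < 1/\alpha\}$. By linearity and identical distribution, $\mathbb{E}[T] \le 1$, so Markov's inequality yields $\mathbb{P}(T \ge 1/\alpha) \le \alpha\, \mathbb{E}[T] \le \alpha$, which is the claimed $(1-\alpha)$-coverage. The main obstacle is the very first step: justifying Edelman's inequality with the data-driven reference point $\widehat\theta_1$ when the $X_i$'s are only identically distributed and may be jointly dependent. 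In the pure iid case $S_1 \perp S_2$, conditioning on $S_1$ preserves the $F$-marginal of each $X_i \in S_2$ and Edelman applies immediately. Under the general dependence advertised in the preamble of \Cref{app:auxi}, the conditional distribution of $X_i$ given $\widehat\theta_1$ need not equal $F$, so one must either appeal to a dependence-robust refinement of Edelman's result or control $\mathbb{E}[|(X_i - \theta_0)/(X_i - \widehat\theta_1)|^{1/\rho}]$ directly under the joint law, exploiting only the $F$-marginal of $X_i$ together with the fact that the bound $2/(1+t)$ is uniform in the reference point. Once this is secured, the integration and Markov steps are routine.
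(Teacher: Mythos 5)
Your proposal reproduces the paper's argument essentially step for step: the fixed-$a$ moment bound
\[
\mathbb{E}\!\left[\left(\frac{\rho-1}{\rho+1}\right)\left|\frac{X-\theta_0}{X-a}\right|^{1/\rho}\right]<1
\]
is obtained by integrating Edelman's tail inequality over $[1,\infty)$ exactly as the paper does (and this is where $\rho>1$ enters), and the coverage guarantee then follows from Markov's inequality applied to the empirical average over $S_2$, using only linearity of expectation and the common marginal law. The computations are correct and match the paper's.

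Your closing caveat is the most valuable part of the proposal, because it names a step the paper's proof passes over silently. The paper's step~(ii) simply substitutes the random reference point $a=\widehat\theta_1$ into the fixed-$a$ bound. This substitution is legitimate when $\widehat\theta_1$ is independent of each $X_i\in S_2$: conditioning on $S_1$ leaves the conditional marginal of $X_i$ equal to $F$, the fixed-$a$ bound applies with $a=\widehat\theta_1(S_1)$, and one then integrates out $S_1$. But under the ``arbitrary dependency'' advertised in the preamble of \Cref{app:auxi}, the conditional law of $X_i$ given $S_1$ need not be $F$, and Edelman's inequality --- a statement about the marginal law of $X$ at a \emph{fixed} reference point --- does not automatically transfer to a data-dependent $\widehat\theta_1$. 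A degenerate but instructive counterpoint is perfectly coupled data, where any reasonable $\widehat\theta_1$ collapses onto $X_i$ and the ratio diverges, breaking the moment bound. You are also right that the \emph{second} place one might worry --- the Markov step over the sum --- is in fact safe with no independence across $i\in S_2$; only the identical joint law of $(X_i,\widehat\theta_1)$ is used there. So the genuine dependence tolerance of the result is on the $S_2$ side, while the Edelman-to-moment step quietly needs $\widehat\theta_1\perp S_2$ (e.g.\ iid data after a sample split, or a dependence regime in which the holdout stays independent of the pilot). Your proposal is correct, takes the same route as the paper, and isolates the assumption the paper's proof leaves implicit.
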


\begin{proof}[Proof of \Cref{thm:E_ed_method_validity}]
We use \cite{edelman1990confidence}'s result that for any $t > 1$ and any fixed $ a\in \mathbb{R}$, 
\[
 \mathbb{P}(|X - \theta_0| \leq t|X-a| ) \geq 1 - \frac{2}{t+1}. 
\]
Using this concentration around $\theta_0$ we can obtain a transformation $\psi(X)$ of $X$ which is bounded in $\mathcal{L}_1$. For any fixed $a \in \mathbb{R}$ we have,
\begin{equation*}
    \begin{split}
        \mathbb{E}\left[\left(\frac{\rho - 1}{\rho+1} \right) \left|\frac{X - \theta_0}{X- a} \right|^{1/\rho} \right] &= \left(\frac{\rho - 1}{\rho+1} \right) \mathbb{E}\left[ \left|\frac{X - \theta_0}{X- a} \right|^{1/\rho} \right] \\
        &= \left(\frac{\rho - 1}{\rho+1} \right) \int_0^{\infty} \mathbb{P}\left(\left|\frac{X - \theta_0}{X- a} \right|^{1/\rho} > t  \right) dt \\
        &\leq \left(\frac{\rho - 1}{\rho+1} \right) \left( 1 + \int_1^{\infty} \mathbb{P} \left( \left| \frac{X - \theta_0}{X - a} \right| > t^{\rho} \right) dt  \right) \\
        &\leq \left(\frac{\rho - 1}{\rho+1} \right) \left( 1 + \int_1^{\infty} \frac{2}{t^{\rho} + 1} dt  \right)\\
        &< \left(\frac{\rho - 1}{\rho+1} \right) \left( 1 + \frac{2}{\rho - 1} \right)\\
        &= 1.
    \end{split}
\end{equation*}
We use this $\mathcal{L}_1$ boundedness property to prove the coverage guarantee.  
\begin{equation*}
    \begin{split}
        \mathbb{P} (\theta_0\notin\widehat{\mathrm{CS}}_{n, \alpha}^{\mathrm{Ed}_{E, \rho}} ) &= \mathbb{P} \left( \frac{1}{n}\left(\frac{\rho - 1}{\rho + 1 } \right) \sum_{i \in S_2}\left|\frac{X_i - \theta_0}{X_i - \widehat \theta_1} \right|^{1/\rho} \geq  \frac{1}{\alpha} \right)  \\
        &\stackrel{(i)}{\leq} \alpha \mathbb{E} \left[\frac{1}{n}\left(\frac{\rho - 1}{\rho + 1 } \right) \sum_{i \in S_2}\left|\frac{X_i - \theta_0}{X_i - \widehat \theta_1} \right|^{1/\rho} \right] \\
        &= \alpha \mathbb{E} \left[\left(\frac{\rho - 1}{\rho + 1 } \right) \left|\frac{X - \theta_0}{X - \widehat \theta_1} \right|^{1/\rho} \right] \\
        &\stackrel{(ii)}{\leq} \alpha. 
    \end{split}
\end{equation*}
The step-$(i)$ follows because of Markov's inequality. The step-$(ii)$ holds because of the $\mathcal{L}_1$ boundedness property of the transformed data (with $a = \widehat \theta_1$). This completes the proof of the theorem. 
\end{proof}
In addition to being identically distributed, if the data $\{X_i\}_{i = 1}^m$ are also independent, it can be shown that the confidence set converges almost surely to a limiting set as the sample size $m$ goes to infinity. 
\begin{theorem}
\label{thm:E_ed_width}
Suppose $X_1, \cdots, X_m$ are independent and identically distributed as $F$ (density $f$) which is an absolutely continuous unimodal distribution with a unique mode $\theta_0$ such that $\mathbb{E}_F[ |(X-\theta)/(X - \theta_0)|^{1/\rho}] $ exists for all $\theta \in \mathbb{R}$. Let $\widehat \theta_1$ be a consistent estimator of $\theta_0$. Then we have, 
\[
\widehat{\mathrm{CS}}_{n, \alpha}^{\mathrm{Ed}_{E, \rho}} \stackrel{a.s.}{\rightarrow} \left\{\theta \in \mathbb{R} \Bigg| \mathbb{E}_F \left[\left(\frac{\rho - 1}{\rho + 1 } \right) \left|\frac{X - \theta}{X -  \theta_0} \right|^{1/\rho}\right] < \frac{1}{\alpha}\right\} \quad \mbox{as} \quad m-n,n \rightarrow \infty,
\]
\end{theorem}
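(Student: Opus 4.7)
The plan is to recast the claimed set convergence as pointwise convergence of two real-valued functions, namely the criterion defining the random set and the criterion defining its limit. Write
\[
g_n(\theta) \;:=\; \frac{\rho-1}{\rho+1}\cdot\frac{1}{n}\sum_{i\in S_2}\left|\frac{X_i-\theta}{X_i-\widehat\theta_1}\right|^{1/\rho},\qquad g(\theta)\;:=\;\frac{\rho-1}{\rho+1}\,\mathbb{E}_F\!\left[\left|\frac{X-\theta}{X-\theta_0}\right|^{1/\rho}\right],
\]
so that $\widehat{\mathrm{CS}}_{n,\alpha}^{\mathrm{Ed}_{E,\rho}}=\{\theta:g_n(\theta)<1/\alpha\}$ and the limit set is $\{\theta:g(\theta)<1/\alpha\}$. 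If I can show $g_n(\theta)\to g(\theta)$ a.s.\ for every fixed $\theta$, then standard arguments give the advertised set convergence: for any $\theta$ with $g(\theta)<1/\alpha$ we have $g_n(\theta)<1/\alpha$ eventually a.s., and for $\theta$ with $g(\theta)>1/\alpha$ the reverse happens. (A minor subtlety is that the theorem assumes $\widehat\theta_1$ is consistent; I would interpret this as strong consistency, or else weaken the conclusion accordingly.)

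Next I would introduce the auxiliary quantity $\tilde g_n(\theta)$ obtained by replacing $\widehat\theta_1$ with $\theta_0$ in $g_n(\theta)$, and decompose $g_n(\theta)-g(\theta)=[g_n(\theta)-\tilde g_n(\theta)]+[\tilde g_n(\theta)-g(\theta)]$. The second bracket goes to zero by the Kolmogorov strong law of large numbers, since the summands are i.i.d.\ and their mean is finite by the moment hypothesis. To handle the first bracket I would exploit the sample-splitting: because $\widehat\theta_1$ is a function of $S_1$ alone, it is independent of $S_2$, so conditionally on $\widehat\theta_1$ the summands $|(X_i-\theta)/(X_i-\widehat\theta_1)|^{1/\rho}$ are i.i.d.\ with mean $\phi_\theta(\widehat\theta_1)$ where $\phi_\theta(y):=\mathbb{E}_F[|(X-\theta)/(X-y)|^{1/\rho}]$. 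A conditional SLLN argument (applied on an event of probability one of the $S_1$-randomness) yields $g_n(\theta)-\tfrac{\rho-1}{\rho+1}\phi_\theta(\widehat\theta_1)\to 0$ a.s., and then consistency of $\widehat\theta_1$ combined with continuity of $\phi_\theta$ at $\theta_0$ finishes the job since $\tfrac{\rho-1}{\rho+1}\phi_\theta(\theta_0)=g(\theta)$.

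The main technical obstacle is therefore the continuity of $\phi_\theta$ at $\theta_0$: as $y\to\theta_0$, the integrand $|X-\theta|^{1/\rho}|X-y|^{-1/\rho}f(X)$ carries a moving singularity at $x=y$, so a direct dominated-convergence appeal fails. I would circumvent this by splitting $\mathbb{R}$ at a small radius $\delta$ around $\theta_0$. Outside $(\theta_0-2\delta,\theta_0+2\delta)$, once $|y-\theta_0|\le\delta$ we have $|X-y|\ge |X-\theta_0|/2$, hence $|X-y|^{-1/\rho}\le 2^{1/\rho}|X-\theta_0|^{-1/\rho}$; the dominating function $2^{1/\rho}|X-\theta|^{1/\rho}|X-\theta_0|^{-1/\rho}$ is integrable by the standing hypothesis, so DCT applies on this region. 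Inside $(\theta_0-2\delta,\theta_0+2\delta)$, the boundedness of $f$ near the mode (which holds because $f$ attains its maximum there) together with the local integrability of $|x-y|^{-1/\rho}$ for $\rho>1$ yields a uniform bound of order $\delta^{1-1/\rho}$ on the contribution of this piece, uniformly in $y$ sufficiently close to $\theta_0$. Sending $\delta\downarrow 0$ then establishes $\phi_\theta(y)\to\phi_\theta(\theta_0)$, closing the argument.

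Two places warrant care that I would flag in the final write-up: first, verifying that $\phi_\theta(\widehat\theta_1)<\infty$ almost surely so the conditional SLLN is legitimate (this follows from the same splitting above applied to $y=\widehat\theta_1$ on the event $|\widehat\theta_1-\theta_0|\le\delta$, which holds eventually by consistency); and second, ensuring the exceptional null sets implicit in the conditional-SLLN argument aggregate into a single $F$-null set across the countable index $\theta$ needed for the set-convergence conclusion (standard, using a countable dense subset of $\mathbb{R}$ and continuity of $g$).
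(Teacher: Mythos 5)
Your proposal is correct and follows the same basic route as the paper: identify membership in $\widehat{\mathrm{CS}}_{n,\alpha}^{\mathrm{Ed}_{E,\rho}}$ with a threshold condition on an empirical average, then let the strong law of large numbers and the consistency of $\widehat\theta_1$ drive the average to its population limit. The paper, however, compresses all of this into a single sentence (``converges almost surely~\ldots\ by strong law of large numbers; we also use\ldots\ that $\widehat\theta_1$ is a consistent estimator''), whereas you explicitly surface the two issues that a careful reader would want resolved: (i) the summands $\lvert (X_i-\theta)/(X_i-\widehat\theta_1)\rvert^{1/\rho}$ are not i.i.d.\ because they share the common random anchor $\widehat\theta_1$, so one must condition on $S_1$ before applying the SLLN; and (ii) passing from $\widehat\theta_1$ to $\theta_0$ in the limit requires continuity of $y\mapsto\mathbb{E}_F\bigl[\lvert(X-\theta)/(X-y)\rvert^{1/\rho}\bigr]$ at $y=\theta_0$, which does not follow from a naive dominated-convergence appeal because of the moving singularity at $x=y$; your splitting of $\mathbb{R}$ at radius $2\delta$ around $\theta_0$, bounding $\lvert X-y\rvert\ge\lvert X-\theta_0\rvert/2$ on the far piece and using local integrability of $\lvert x-y\rvert^{-1/\rho}$ (with $\rho>1$) and boundedness of $f$ near the mode on the near piece, is the right way to close this. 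Two small caveats worth flagging in a final write-up: the boundedness of $f$ near $\theta_0$ is used in your near-$\theta_0$ estimate but is not explicitly assumed in the theorem (it is implicit in the paper's usage of this family of distributions), and one should state the mode of set convergence being claimed and then aggregate null sets over a countable dense set of $\theta$, as you already note.
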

\begin{proof}[Proof of \Cref{thm:E_ed_width}]
The proof is a simple application of the law of large numbers. We have the following equivalence relation, 
\begin{equation*}
        \begin{split}
            \theta \in  \widehat{\mathrm{CS}}_{n, \alpha}^{\mathrm{Ed}_{E, \rho}}& \iff  \left(\frac{\rho - 1}{\rho + 1 } \right) \frac{1}{n} \sum_{i \in S_2}\left|\frac{X_i - \theta}{X_i - \widehat \theta_1} \right|^{1/\rho} < \frac{1}{\alpha} . 
        \end{split}
    \end{equation*}
As $n \rightarrow \infty$ the term in the left hand side converges almost surely to $((\rho - 1)/(\rho + 1))\mathbb{E}[|(X - \theta)/(X - \theta_0)|^{1/\rho}]$ by strong law of large numbers (we also use the fact that $\widehat \theta_1$ is a consistent estimator of $\theta_0$). Thus as $m-n, n \rightarrow \infty$, 
\[
\widehat{\mathrm{CS}}_{n, \alpha}^{\mathrm{Ed}_{E, \rho}} \stackrel{a.s.}{\rightarrow} \left\{\theta \in \mathbb{R} \Bigg| \mathbb{E}_F \left[\left(\frac{\rho - 1}{\rho + 1 } \right) \left|\frac{X - \theta}{X - \theta_0} \right|^{1/\rho}\right] < \frac{1}{\alpha}\right\}
\]
This completes the proof of the theorem. 
\end{proof}

\section{Proof of \Cref{thm:main_result_os_validity}}
\label{app:thm_os_valid}
We begin the proof by considering the following set of distribution functions, 
\[
\mathcal{C}_n(\alpha/2) = \left\{ F: c_B \leq F(I_{Bi}) \leq d_B \quad \mbox{for } i=1,\cdots,n_B \quad \mbox{and for } B = 0,\cdots,B_{max}\right\},
\]
where,
\begin{equation*}
    \begin{split}
        c_B &= q\mbox{Beta}(\alpha/(4(B+2)n_Bt_n),2^{B+s_n},n+1 - 2^{B+s_n}), \\
        d_B &= q\mbox{Beta}(1 -(\alpha/(4(B+2)n_Bt_n)),2^{B+s_n},n+1 - 2^{B+s_n}).
    \end{split}
\end{equation*}
We know from Section $2.1$ of \cite{walther2022confidence} that,
\begin{equation}
    \label{eq:ci_band}
   \mathbb{P}_F(F \in \mathcal{C}_n (\alpha)) \geq 1-(\alpha/2), 
\end{equation} 
for any distribution function $F$. Our proof is based on the following fundamental property of unimodal distribution function, for two adjacent disjoint intervals $I_1 = [a,b],I_2 = [b,c]$ (where $-\infty<a<b<c<\infty$) if $F(I_1)/|I_1| > F(I_2)/|I_2|$ then either the mode $\theta_0 \in I_1$ or $\theta_0 <a$ i.e.\ the mode is "closer" to the interval $I_1$ as compared to $I_2$. Thus we can drop $I_2$ and the intervals present after $I_2$ from the collection of intervals that can contain the mode. Similarly if $F(I_1)/|I_1| < F(I_2)/|I_2|$ then either the mode $\theta_0 \in I_2$ or $\theta_0 > c$ i.e.\ the mode is "closer" to the interval $I_2$ as compared to $I_1$. Thus we can drop $I_1$ and the intervals present before $I_1$ from the collection of intervals that can contain the mode.

We note from the construction that for $0\leq B \leq B_{\max}$ the intervals $\{I_{Bi}\}_{i=1}^{n_B}$ are disjoint and every pair of consecutive intervals in this set share a common boundary. Moreover the intervals $\{I_{0i}\}_{i=1}^{n_0}$ are subsets of the intervals $\{I_{1i}\}_{i=1}^{n_1}$ which are in fact subsets of the intervals $\{I_{2i}\}_{i=1}^{n_2}$ and so on till $\{I_{B_{max}i}\}_{i=1}^{n_{B_{max}}}$. Using the fact that $\mathbb{P}_F(F \in \mathcal{C}_n (\alpha)) \geq 1-(\alpha/2)$ we note that with probability greater than or equal to $1-(\alpha/2)$ the following holds for all $1 \leq i \leq n_B$ and for all $0 \leq B \leq B_{max}$,
\begin{equation}
    \label{eq:ratio_bounds}
    \frac{c_B}{|I_{Bi}|} \leq \frac{F(I_{Bi})}{|I_{Bi}|} \leq \frac{d_B}{|I_{Bi}|}.
\end{equation}
We start with $B=B_{max}$ and find the interval $I_{B_{max}i_{min}}$. By \eqref{eq:ratio_bounds} $I_{B_{max}i_{min}}$ has the highest lower bound in the set $\{I_{B_{max}i}\}_{i=1}^{n_{B_{max}}}$. We now consider the set $K_{B_{max}}$. We observe that for all intervals $I_{B_{max}i} \in K_{B_{max}}$ the following holds,
\begin{equation*}
    \begin{split}
        &\mbox{Width}(I_{B_{max}i}) \leq h_{B_{max}} \mbox{Width}(I_{B_{max}i_{min}}) \\
        \implies & |I_{B_{max}i}| \leq \frac{d_{B_{max}}}{c_{B_{max}}} |I_{B_{max}i_{min}}| \\
        \implies &\frac{c_{B_{max}}}{|I_{B_{max}i_{min}}|} \leq \frac{d_{B_{max}}}{|I_{B_{max}i}|}.
    \end{split}
\end{equation*}
Thus $K_{B_{max}}$ comprises of the largest set of neighbouring intervals of $I_{B_{max}i_{min}}$ which have overlapping bounds with that of $I_{B_{max}i_{min}}$ i.e.\ $K_{B_{max}}$ consists of those intervals $I_{B_{max}i}$ for which the upper bound of $F(I_{B_{max}i})/|I_{B_{max}i}|$ is greater than the lower bound of $F(I_{B_{max}i_{min}})/|I_{B_{max}i_{min}}|$. From the fundamental property of unimodal distribution functions discussed before, we drop all the intervals in $\{I_{B_{max}i}\}_{i=1}^{n_{B_{max}}}$ which are not in the set $K_{B_{max}}$. Continuing like this for $B=B_{max}-1, \cdots, 0$ we finally obtain $K_0$. By the same argument we know that the mode $\theta_0$ can not lie in any interval in the set $\cup_{i=1}^{n_{B_0}}\{I_{B_0i}\} \backslash K_0$. Thus either $\theta_0 \in \left(-\infty, X_{(1)} \right]$ or $\theta_0 \in K_0$ or $\theta_0 \in \left[ X_{(n)}, \infty \right)$. However we know from the fundamental property of unimodal distribution functions that $\theta_0$ can lie in $\left(-\infty, X_{(1)} \right]$ iff the left most interval $I_{01} \in K_0$. Similarly we know that $\theta_0$ can lie in $\left[ X_{(n)}, \infty \right)$ iff the right most interval $I_{0n_0} \in K_0$. 

Here we use Theorem-$4$ of \cite{lanke1974interval} which states that,
\begin{equation}
\label{eq:lanke_result}
\mathbb{P} (\theta_0 \in (X_{(1)} - \lambda_{n,\alpha}(X_{(n)}-X_{(1)}), X_{(n)} + \lambda_{n,\alpha}(X_{(n)}-X_{(1)})) ) \geq 1-(\alpha/2),
\end{equation}
where $\lambda_{n,\alpha}$ is as described in \Cref{alg:proposed-conf-int_os}. Combining the previous discussion and the high probability statements in \eqref{eq:ci_band} and \eqref{eq:lanke_result} we conclude that,
\[
\mathbb{P}_F(\theta_0\in\widehat{\mathrm{CI}}_{n,\alpha}) \geq (1-(\alpha/2))+(1-(\alpha/2)) -1 \geq 1-\alpha. 
\]
This completes the proof of the theorem.

\section{Proof of \Cref{thm:width_analysis_os}}
\label{app:thm_os_width}
From \Cref{alg:proposed-conf-int_os} we can see that $h_B$ is a decreasing function of $\alpha$. Thus because of the nested nature of the algorithm we have $\widehat{\mathrm{CI}}_{n,\alpha} \subset \widehat{\mathrm{CI}}_{n,1/n}$ and hence $\mathrm{Width}(\widehat{\mathrm{CI}}_{n,\alpha}) \leq \mathrm{Width}(\widehat{\mathrm{CI}}_{n,1/n})$. Because of this it is enough to prove the theorem for $\alpha = 1/n$. Therefore in the remaining proof we assume $\alpha = 1/n$ without loss of generality. We note that the number of observations contained in $I_{Bi}$ is $2^{B+s_n} + 1$ which varies from $\log(n) + 1$ to $(n/8) + 1$ as $B$ ranges from $0$ to $B_{max}$. Let $\log(n) < r(n) = An^{2\beta/(1+2\beta)} < n/8$ for some $A>0$ and let $0 \leq B(r) = \log_2(r(n)) - s_n  \leq B_{\max}$. We note that the number of observations contained in $I_{B(r)i}$ is $r(n) + 1$. We re-label the interval formed by the set $K_{B(r) +1}$ as $[c,d]$.  If there are at-least $r(n) + 1$ observations in $[c,d]$ we let $K(n)$ be the discrete random variable such that the following holds. $[ X_{K(n)}, X_{K(n) + r(n)} ] = I_{B(r)i}$ for some $i$, $[ X_{K(n)}, X_{K(n) + r(n)} ] \subset [c,d]$ and,
\[
X_{K(n) + r(n)} - X_{K(n)} = \min\{\mbox{Width}(I_{B(r)i})|\mbox{ } I_{B(r)i} \subset [c,d] \}.
\]
If $[c,d]$ does not contain $r(n) + 1$ observations then $K(n)$ can be set to any arbitrary number (does not affect the proof). It is important to note that by definition $[ X_{K(n)}, X_{K(n) + r(n)} ] = I_{B(r)i_{min}}$. Since $\widehat{\mathrm{CI}}_{n,\alpha}$ contains $\theta_0$ with probability at-least $1-(1/n)$ we have that $\theta_0 \in [c,d]$ with probability at-least $1-(1/n)$. For showing the in probability convergence we can therefore do the remaining computations under the event $\{\theta_0 \in [c,d] \}$. We also let $J(n)$ be the following discrete random variable. If $[\theta_0, d]$ contains at-least $r(n) + 1$ observations then we define $J(n)$ to be the smallest index $j$ such that $[ X_{j}, X_{j + r(n)} ] = I_{B(r)i}$ for some $i$, $[ X_{j}, X_{j + r(n)} ] \subset [\theta_0,d]$. If $[\theta_0,d]$ does not contain $r(n) + 1$ observations then $J(n)$ can be set to any arbitrary number. We need the following lemma from \cite{sager1975consistency}. 
\begin{lemma}
    \label{lem:sager}
    Let $S_1,S_2,\cdots$ and $T_1,T_2,\cdots$ be a sequence of random variables such that $S_n \leq T_n$ for each $n$ and $[S_n, T_n]$ contains $r(n) + 1$ observations where $r(n)$ is of the form $An^v$, $0 < v <1$. Then we have the following, 
    \[
    \frac{F(T_n) - F(S_n)}{F_n(T_n) - F_n(S_n)} = 1 + o(r(n)^{-1/2}log(r(n))) \quad \mbox{with probability one}. 
    \]
\end{lemma}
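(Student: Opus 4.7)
The plan is to reduce the claim to a uniform ratio-type limit theorem for the empirical distribution function indexed by intervals. By hypothesis, the random interval $[S_n,T_n]$ contains exactly $r(n)+1$ of the $n$ sample points, so
\[
F_n(T_n)-F_n(S_n)=\frac{r(n)+O(1)}{n}=\frac{r(n)}{n}\bigl(1+O(1/r(n))\bigr),
\]
where the $O(1)$ term absorbs the boundary convention regarding whether $S_n,T_n$ themselves coincide with sample points. Since $1/r(n)=o(r(n)^{-1/2}\log r(n))$ under $r(n)=An^v$ with $v>0$, it therefore suffices to show that almost surely
\[
F(T_n)-F(S_n)\;=\;\frac{r(n)}{n}\bigl(1+o(r(n)^{-1/2}\log r(n))\bigr).
\]

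The key tool is a uniform ratio-type oscillation bound of the Stute (1982) / Einmahl--Mason (1996) type: there exists a constant $c>0$ such that, with probability one,
\[
\sup_{s\le t\,:\,F(t)-F(s)\ge c\log n/n}\;\frac{\bigl|(F_n(t)-F_n(s))-(F(t)-F(s))\bigr|}{\sqrt{F(t)-F(s)}}=O\!\left(\sqrt{\tfrac{\log n}{n}}\right).
\]
Because this bound is uniform over all intervals with sufficient $F$-mass, it automatically applies to the data-dependent interval $[S_n,T_n]$.

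With this tool in hand, I would proceed in two steps. First, a crude application of the displayed inequality shows $F(T_n)-F(S_n)\asymp r(n)/n$ almost surely; the hypothesis $r(n)=An^v$ with $v>0$ guarantees $r(n)/n\gg\log n/n$, so the interval sits above the threshold for which the ratio bound is valid. Second, plugging $F(T_n)-F(S_n)=\Theta(r(n)/n)$ back into the ratio bound yields
\[
\bigl|(F_n(T_n)-F_n(S_n))-(F(T_n)-F(S_n))\bigr|\;=\;O\!\left(\sqrt{\tfrac{r(n)\log n}{n^2}}\right)\;=\;O\!\left(\tfrac{\sqrt{r(n)\log r(n)}}{n}\right),
\]
using $\log n=\Theta(\log r(n))$. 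Dividing both sides by $F_n(T_n)-F_n(S_n)\sim r(n)/n$ gives a relative error of order $r(n)^{-1/2}\sqrt{\log r(n)}$, which is certainly $o(r(n)^{-1/2}\log r(n))$, proving the claim.

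The main obstacle is securing the ratio-type bound in the \emph{almost sure} sense rather than merely in probability. A direct DKW estimate delivers only the uniform absolute bound $\sup_I|F_n(I)-F(I)|=O(\sqrt{\log n/n})$ a.s., which is far too coarse once one divides by $r(n)/n$; one must genuinely exploit the variance-scaling fact that intervals of small $F$-mass have proportionally smaller empirical fluctuations. I would therefore invoke Stute's ratio limit theorem or the Einmahl--Mason almost-sure oscillation moduli, rather than any naive DKW-style argument, to obtain the sharpened form stated above.
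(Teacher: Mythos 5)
The paper does not supply a proof of this lemma: it is imported verbatim from \cite{sager1975consistency} and used as a black box in the width analysis of~\Cref{thm:width_analysis_os}. There is therefore no ``paper proof'' to compare against, only Sager's original argument, which (in the source) proceeds by a direct moment/Borel--Cantelli calculation for spacings. Your proposal is a genuinely different, and in fact more modern, route. The two-step structure you outline is correct: (i) the deterministic identity $F_n(T_n)-F_n(S_n)=(r(n)+O(1))/n$ reduces the claim to controlling $F(T_n)-F(S_n)$, and the error $O(1/r(n))$ is indeed negligible; (ii) a sandwich argument using the uniform oscillation modulus first pins down $F(T_n)-F(S_n)\asymp r(n)/n$ a.s., after which a second pass yields the refined fluctuation bound $O(\sqrt{r(n)\log r(n)}/n)$, and dividing by $r(n)/n$ gives a relative error $O(r(n)^{-1/2}\sqrt{\log r(n)})=o(r(n)^{-1/2}\log r(n))$ --- actually a slight improvement over what the lemma asserts.

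One point to tighten: the displayed inequality you invoke --- a supremum over all intervals of $F$-mass at least $c\log n/n$, normalized by $\sqrt{F(t)-F(s)}$ --- is not a single named theorem; it conflates the Stute/Einmahl--Mason a.s.\ oscillation modulus (which bounds $\omega_n(a_n)=\sup_{F(t)-F(s)\le a_n}|\alpha_n(t)-\alpha_n(s)|$ by $O(\sqrt{a_n\log(1/a_n)})$ for $a_n$ satisfying $na_n/\log n\to\infty$) with a.s.\ ratio-limit results for the standardized empirical process. What you actually need is only the former, applied with $a_n=C\,r(n)/n$, together with the sandwich step to justify that the random interval $[S_n,T_n]$ falls within this mass regime. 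Phrasing the argument directly in terms of Stute's oscillation modulus, rather than a self-normalized supremum, would remove the appearance of circularity and match the hypotheses of a citable theorem. Your underlying reason for rejecting a naive DKW bound is exactly right: DKW gives only $O(\sqrt{\log n/n})$ uniformly, which after dividing by $r(n)/n$ yields the useless $O(n\sqrt{\log n/n}/r(n))$, so exploiting the variance proportional to interval mass is essential.
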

We break the proof of the theorem into three cases. At first we prove the result for the case when $\theta_0 = c$, then we prove the result for the case when $\theta_0 = d$, and finally we prove for the case when $\theta_0 \in (c, d)$. 

Suppose $\theta_0 = c$. We show that $X_{J(n) + r(n)} = c + o(\delta(n))$ with probability $1$ where $\delta(n) = n^{-1/(1+ 2\beta)}(\log n )^{1/\beta}$. We observe that $r^+(R_2\delta) \rightarrow f(c)$ as $\delta \rightarrow 0$. If this does not happen then we can say from the definition of $r^+(R_2(\delta)$ that for some $\epsilon >0$ and for all $c <x < d$ we have $f(x) + \epsilon < f(c)$. This implies that $f(c) + \epsilon < f(c)$, a contradiction to the assumption of \textit{standard conditions}. 

Combining the above with the fact that $r(\delta, R_2\delta) \geq 1 + \rho \delta^{\beta}$ we can say that there exists a constant $B > 0$ such that $B < r^+(R_2\delta) < r^-(\delta) \leq r^-(\epsilon \delta)$ for all small $\delta$ and for each $0 < \epsilon < 1$. Thus for $0 < \epsilon <1$ and for all large $n$ the following holds true, 
\[
F(c + \epsilon \delta_n) - F(c) \geq \int_{c}^{c + \epsilon \delta_n}r^-(\epsilon\delta(n)) dx > B \epsilon \delta(n). 
\]
This implies that, 
\begin{equation}
    \label{eq:epsilondelta}
  \liminf_{n \rightarrow \infty} \frac{F(c + \epsilon \delta(n)) - F(c)}{B \epsilon \delta(n)} \geq 1 .   
\end{equation}
On the other hane using \Cref{lem:sager} and the definition of $J(n)$ we get that, 
\[
1 \leq \liminf_{n \rightarrow \infty}\frac{F(X_{J(n) + r(n)}) - F(c)}{r(n)/n} \leq \limsup_{n \rightarrow \infty} \frac{F(X_{J(n) + r(n)}) - F(c)}{r(n)/n} \leq 2 \mbox{ with probability }1.
\]
We note that $n^{-1}r(n)/(B\epsilon \delta(n)) \rightarrow 0$ as $n \rightarrow \infty$. Using this in the previous equation we obtain that, 
\begin{equation}
    \label{eq:J_n}
    \lim_{n \rightarrow \infty} \frac{F(X_{J(n) + r(n)}) - F(c)}{B\epsilon \delta(n)} \rightarrow 0 \mbox{ with probability }1.
\end{equation}
The equations \eqref{eq:epsilondelta} and \eqref{eq:J_n} imply that $F(c + \epsilon \delta(n)) > F(X_{J(n) + r(n)})$ with probability $1$. Since $\epsilon$ can be chosen arbitrarily we conclude that $X_{J(n) + r(n)} = c + o(\delta(n))$ with probability $1$. 

Suppose $L(n)$ is the discrete random variable such that $[X_{L(n)} ,X_{L(n) + r(n)}] = I_{B(r)i_{ex}} \subset K_{B(r)}$ where $I_{B(r)i_{ex}}$ is the right extreme interval present in the set $K_{B(r)}$. Analysis can be similarly done for the left extreme interval present in the set $K_{B(r)}$. We show that $X_{L(n)} = c + o(\delta(n))$ with probability $1$. From \Cref{alg:proposed-conf-int_os} we know that, 
\[
\frac{X_{L(n) + r(n)} - X_{L(n)}}{X_{K(n) + r(n)} - X_{K(n)}} \leq h_{B(r)} = \frac{d_{B(r)}}{c_{B(r)}}. 
\]
Here we use a result from \cite{hengartner1995finite}. Suppose $\tau$ is a constant such that, 
\[
2\beta + \frac{2 \tau^3}{3} \sqrt{\frac{\log(n)}{(1 + 2\beta)n^{2\beta/(1 + 2\beta)}}} \leq \tau^2 - 2.
\]
Then the following holds true, 
\begin{equation*}
    \begin{split}
        \lim_{n \rightarrow \infty} \left(c_{B(r(n))} - \frac{r(n) - \sqrt{r(n)} \tau \sqrt{\log(n/r(n))}}{n + \sqrt{n}z_{1 - \alpha/2}}\right) &\geq 0 \\
        \lim_{n \rightarrow \infty} \left(d_{B(r(n))} - \frac{r(n) +\sqrt{r(n)} \tau \sqrt{\log(n/r(n))}}{n - \sqrt{n}z_{1 - \alpha/2}}\right) &\leq 0 .
    \end{split}
\end{equation*}
We now analyze the behavior of $h_{B(r)}$. All the inequalities in the following equation are true upto a constant. 
\begin{equation*}
    \begin{split}
        h_{B(r(n))} &= \frac{d_{B(r(n))}}{c_{B(r(n))}} \\
        &\leq \frac{r(n) +\sqrt{r(n)} \tau \sqrt{\log(n/r(n))}}{n - \sqrt{n}z_{1 - \alpha/2}}\frac{n + \sqrt{n}z_{1 - \alpha/2}}{r(n) - \sqrt{r(n)} \tau \sqrt{\log(n/r(n))}} \\
        &= \frac{n^{2\beta/(1+2\beta)} + n^{\beta/(1+2\beta)\sqrt{\log(n^{1/(1+2\beta)})}}}{n^{2\beta/(1+2\beta)} - n^{\beta/(1+2\beta)\sqrt{\log(n^{1/(1+2\beta)})}}}\frac{n + \sqrt{n}z_{1 - \alpha/2}}{n - \sqrt{n}z_{1 - \alpha/2}} \\
        &= \left(1 + \frac{1}{n^{\beta/(1+2\beta)}(\log n)^{-1/2}} \right)\left(1 + o(n^{-1/2})\right) \\
        &= (1 + o(n^{-\beta/(1 +2\beta)}\log(n)))(1 + o(n^{-1/2})) \\
        &= 1 + o(n^{-\beta/(1 +2\beta)}\log(n)).
    \end{split}
\end{equation*}
Suppose $\Omega_L$ is the set where our claim that $X_{L(n)} = c + o(\delta(n))$ with probability $1$ does not hold true. If $\omega \in \Omega_L$, there exists a subsequence $\{n(j)\}$ and and $\epsilon$ such that $X_{L(n(j))} > c + R_2 \epsilon \delta(n(j))$ for all $j$. 

Setting $S_n = X_{L(n)}, T_n = X_{L(n) + r(n)}$ and $S_n = X_{J(n)}, T_n = X_{J(n) + r(n)}$ and using \Cref{lem:sager} we get that the following holds with probability $1$, 
\begin{equation*}
    \begin{split}
        \frac{F(X_{L(n) + r(n)}) - F( X_{L(n)})}{r(n)/n} &= 1 + o(r(n)^{-1/2}\log(r(n))), \\
       \frac{F(X_{J(n) + r(n)}) - F( X_{J(n)})}{r(n)/n} &= 1 + o(r(n)^{-1/2}\log(r(n))).
    \end{split}
\end{equation*}
Using the assumption made in \Cref{thm:width_analysis_os} we have, 
\begin{equation*}
    \begin{split}
        1 + \rho(\epsilon \delta(n(j)))^{\beta} &\leq r(\epsilon \delta(n(j)), R_2\epsilon\delta(n(j))) \\
        &\leq \frac{r^-(\epsilon \delta(n(j)))}{r^+(R_2\epsilon \delta(n(j)))}\frac{X_{J(n(j)) + r(n(j))} - X_{J(n(j))}}{X_{K(n(j)) + r(n(j))} - X_{K(n(j))}} \\
        &\leq \frac{r^-(\epsilon \delta(n(j)))}{r^+(R_2\epsilon \delta(n(j)))} \frac{X_{J(n(j)) + r(n(j))} - X_{J(n(j))}}{X_{L(n(j)) + r(n(j))} - X_{L(n(j))}} h_{B(r(n(j)))} \\
        &\leq \frac{(F(X_{J(n(j)) + r(n(j))}) - F( X_{J(n(j))}))}{(F(X_{L(n(j)) + r(n(j))}) - F( X_{L(n(j))}))}\frac{n(j)r(n(j))^{-1}}{n(j)r(n(j))^{-1}} h_{B(r(n(j)))} \\
        &\leq (1 + o(r(n(j))^{-1/2}\log(r(n(j)))))(1 + o(n(j)^{-\beta/(1 +2\beta)}\log(n(j)))) \\
        &= (1 + o(n(j)^{-\beta/(1 +2\beta)}\log(n(j))))(1 + o(n(j)^{-k/(1 +2k)}\log(n(j)))) \\
        &= (1 + o(n(j)^{-\beta/(1 +2\beta)}\log(n(j)))).
    \end{split}
\end{equation*}
This is a contradiction because $ 1 + \rho(\epsilon \delta(n(j)))^{\beta}  = 1 + \rho \epsilon^{\beta}n(j)^{-\beta/(1+2\beta)}\log(n(j))$ and this violates the above inequality for large $j$. Thus we conclude that $X_{L(n)} = c + o(\delta(n))$ with probability $1$. Moreover with probability $1$ we also have, 
\begin{equation*}
    \begin{split}
     X_{L(n) + r(n)} - c &= (X_{L(n) + r(n)} - X_{L(n)}) + X_{L(n)} - c \\
     &\leq h_{B(r(n))} (X_{K(n) + r(n)} - X_{K(n)})  + X_{L(n)} - c \\
     &\leq h_{B(r(n))} (X_{J(n) + r(n)} - X_{J(n)}) + X_{L(n)} - c \\
     &\leq (1 + o(\delta(n)^{\beta}))o(\delta(n)) + o(\delta(n)) \\
     &= o(\delta(n)). 
    \end{split}
\end{equation*}
In the above we use the fact that $h_{B(r(n))} \leq 1 + o(n^{-\beta/(1 +2\beta)}\log(n)) = 1 + o(\delta(n)^{\beta})$, $(X_{J(n) + r(n)} - X_{J(n)}) \leq (X_{J(n) + r(n)} - c) = o(\delta(n))$, and $X_{L(n)} - c = o(\delta(n))$. We note that $\widehat{\mathrm{CI}}_{n,\alpha} \subset K_{B(r)}$ and $X_{L(n) + r(n)}$ is the right end-point of the confidence interval formed by $K_{B(r)}$. A similar result can be obtained for the left end-point of the confidence interval formed by $K_{B(r)}$. Since both the end-points behave as $c + o(\delta(n))$, the width of the confidence interval formed by $K_{B(r)}$ and thus also $\mbox{Width}(\widehat{\mathrm{CI}}_{n,\alpha})$ behaves as $o(\delta(n))$. This completes the proof for the case when $\theta_0 = c$. The proof for the case when $\theta_0 = d$ is similar. 

Now suppose $\theta_0 \in (c, d)$. We define discrete random variables $K_1(n)$ and $K_2(n)$ as follows. $[X_{K_1(n)}, X_{K_1(n) + r(n)}] = I_{B(r)i}$ for some $i$, $[X_{K_1(n)}, X_{K_1(n) + r(n)}] \subset [c, \theta_0]$ and, 
\[
X_{K_1(n) + r(n)} - X_{K_1(n)} = \min\{\mbox{Width}(I_{B(r)i}) | I_{B(r)i} \subset [c, \theta_0] \}.
\]
Similarly $[X_{K_2(n)}, X_{K_2(n) + r(n)}] = I_{B(r)i}$ for some $i$, $[X_{K_2(n)}, X_{K_2(n) + r(n)}] \subset [\theta_0, d]$ and, 
\[
X_{K_2(n) + r(n)} - X_{K_2(n)} = \min\{\mbox{Width}(I_{B(r)i}) | I_{B(r)i} \subset [\theta_0, d] \}.
\]
Suppose $L_1(n)$ and $L_2(n)$ are discrete random variables such that $[X_{L_1(n)}, X_{L_1(n) + r(n)}] = I_{B(r)i} \mbox{ (for some $i$) } \subset [c, \theta_0]$, $[X_{L_2(n)}, X_{L_2(n) + r(n)}] = I_{B(r)i} \mbox{ (for some $i$) } \subset [\theta_0, d]$ and, 
\begin{equation*}
    \begin{split}
        \frac{X_{L_1(n) + r(n)} - X_{L_1(n)}}{X_{K_1(n) + r(n)} - X_{K_1(n)}} &\leq h_{B(r(n))} , \\
        \frac{X_{L_2(n) + r(n)} - X_{L_2(n)}}{X_{K_2(n) + r(n)} - X_{K_2(n)}} &\leq h_{B(r(n))}.
    \end{split}
\end{equation*}

From the previous cases we know that $X_{L_1(n)} = c + o(\delta(n))$ and $X_{L_2(n) + r(n)} = c + o(\delta(n))$ with probability $1$. We observe that $\widehat{\mathrm{CI}}_{n,\alpha} \subset \mbox{ C.I. generated by }K_{B(r(n))} \subset [X_{L_1(n)}, X_{L_2(n) + r(n)}]$. Thus we have $\mbox{Width}(\widehat{\mathrm{CI}}_{n,\alpha}) \leq X_{L_2(n) + r(n)} - X_{L_1(n)} = o(\delta(n))$ with probability $1$. In other words $n^{1/(1+2\beta)}(\log n)^{-1/\beta}\mbox{Width}(\widehat{\mathrm{CI}}_{n,\alpha}) \stackrel{P}{\rightarrow} 0$ as $n \rightarrow \infty$. This completes the proof of the theorem. 

\section{Proof of \Cref{thm:m_estimation}}
\label{app:m_validity}
We note that $\theta_0 = \mbox{arg max}_x(f(x))$. Let $g_h$ be the density of a Uniform$[-h,h]$ random variable. We consider the convolution $f*g_h$, 
\begin{equation*}
    \begin{split}
        f*g_h(y) &= \int_{-h}^h f(y - x)g_h(x) dx \\
        &= \frac{1}{2h} \int_{-h}^h  f(y - x) dx \\
        &= \frac{F(y + h) - F(y - h)}{2h} \\
        &= \mathbb{E} \left[\frac{1}{2h} \textbf{1}\{ y - h < X \leq y + h \}\right]. 
    \end{split}
\end{equation*}
Let $F_h$ be the corresponding distribution function of the convolution $f*g_h$. Let $\theta_h = \mbox{arg max}_y(f*g_h(y))$ be the population mode of the convolution $f*g_h$. We note the following characterization of $\theta_h$, 
\begin{equation*}
    \begin{split}
\theta_h &=  \mbox{arg min}_y( - f*g_h(y)) \\
&= \mbox{arg min}_y \mathbb{E} \left[-\frac{1}{2h} \textbf{1}\{ y - h < X \leq y + h \}\right] \\
& = \mbox{arg min}_y Pm_{y;h}, 
    \end{split}
\end{equation*}
where $m_{y;h}$ is as defined in \Cref{alg:M_proposed-conf-int}. We further observe the following for $y \geq \theta_0 + h$, 
\begin{equation*}
    \begin{split}
        f*g_h(y) &= \int_{-h}^h f(y - x)g_h(x) dx \\
        &\leq \int_{-h}^h f(\theta_0 + h - x)g_h(x) dx \\
        &= f*g_h(\theta_0 + h). 
    \end{split}
\end{equation*}
We can similarly show that $f*g_h(y) \leq f*g_h(\theta_0 - h)$ if $y \leq \theta_0 - h$. Thus the mode of the convolution, $\theta_h = \mbox{arg max}_y(f*g_h(y)) \in (\theta_0 - h, \theta_0 + h)$. This implies that, 
\begin{equation}
    \label{eq:theta_0_inclusion}
    \theta_0 \in (\theta_h - h, \theta_h + h). 
\end{equation}
This suggests that if we can compute a finite sample valid $(1 - \alpha)$ confidence set $\tilde{\mathrm{CI}}_{n, \alpha}^h$ of $\theta_h$, then $\tilde{\mathrm{CI}}_{n, \alpha}^h \pm h$ will be a finite sample valid $(1 - \alpha)$ confidence set of $\theta_0$. From Theorem-$2$ in \cite{takatsu2025bridging} we see that if we can find a random mapping $\theta \rightarrow L_{n,\alpha}(\theta, \widehat \theta_1; S_2) = P_n(m_{\theta;h} - m_{\widehat \theta_1; h}) - t(\alpha, \theta, \widehat \theta_1)$ (for some non-negative function $t(\cdot, \cdot, \cdot)$) such that, 
\begin{equation}
\label{eq:m_condition}
    \mathbb{P}(P(m_{\theta_h;h} - m_{\widehat \theta_1;h}) \geq L_{n,\alpha}(\theta, \widehat \theta_1; S_2) |S_1) = \mathbb{P}((P_n - P)(m_{\theta_h;h} - m_{\widehat \theta_1;h}) \leq t(\alpha, \theta_h, \widehat \theta_1) |S_1) \geq 1 - \alpha,
\end{equation}
then $\tilde{\mathrm{CI}}_{n, \alpha}^h = \{\theta \in \mathbb{R}| L_{n,\alpha}(\theta, \widehat \theta_1; S_2) \leq 0 \}$ is a finite sample valid $(1 - \alpha)$ confidence set of $\theta_h$. 

We note that $(m_{\theta_h;h} - m_{\widehat \theta_1;h})(X)$ is a trinomial distribution that takes value $-1/2h, 0, 1/2h$. We use the following one-sided concentration inequality for the mean of trinomial distribution. 
\begin{lemma}
\label{lem:trinomial}
 Let $Z_1, \cdots, Z_n$ be iid random variables from a trinomial distribution which takes values $t_1, t_2, t_3$ with probabilities $p, q, 1- p - q$ respectively. Let $\mu$ and $\widehat \mu$ be the population mean and sample mean respectively. Then for $\alpha \in (0,1)$ we have, 
 \[
 \mathbb{P}\left(\widehat \mu - \mu \leq (|t_1|+|t_2|+|t_3|)\sqrt{\frac{3}{2n}}\left[\sqrt{\log(1/\alpha)} + 2 \right] \right) \geq 1 - \alpha. 
 \]
\end{lemma}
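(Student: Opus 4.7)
My plan is to reduce the claim to Hoeffding's inequality applied to each of the three Bernoulli counts and then glue the pieces together via a union bound. The starting point is the pointwise decomposition $Z_i = \sum_{j=1}^{3} t_j \mathbbm{1}\{Z_i = t_j\}$, which after centering yields
\[
\widehat\mu - \mu \;=\; \sum_{j=1}^{3} t_j(\widehat p_j - p_j),
\]
where $\widehat p_j = n^{-1}\sum_i \mathbbm{1}\{Z_i = t_j\}$ is the empirical frequency of the atom $t_j$ and $(p_1, p_2, p_3) = (p, q, 1-p-q)$. The triangle inequality then gives $\widehat \mu - \mu \leq (|t_1|+|t_2|+|t_3|)\max_j |\widehat p_j - p_j|$, so it suffices to control the three Binomial frequency deviations simultaneously.

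Next I would apply Hoeffding's inequality to each $\widehat p_j - p_j$, which is the mean of $n$ iid centered Bernoulli variables, obtaining $\mathbb{P}(|\widehat p_j - p_j| > \eta) \leq 2 \exp(-2 n \eta^2)$ for every $\eta > 0$. A union bound over $j \in \{1,2,3\}$ shows that $\max_j|\widehat p_j - p_j| \leq \sqrt{\log(6/\alpha)/(2n)}$ holds with probability at least $1-\alpha$, and combining with the decomposition above yields the intermediate inequality $\widehat \mu - \mu \leq (|t_1|+|t_2|+|t_3|)\sqrt{\log(6/\alpha)/(2n)}$ on this event.

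The remaining step is purely cosmetic: to match the stated form I would use the elementary inequality $\sqrt{a+b}\leq \sqrt a + \sqrt b$ to split $\sqrt{\log(6/\alpha)} \leq \sqrt{\log 6} + \sqrt{\log(1/\alpha)}$, observe that $\sqrt{\log 6} < 2$, and use the trivial $\sqrt{1/(2n)} \leq \sqrt{3/(2n)}$. Chaining these estimates converts the intermediate bound into exactly $(|t_1|+|t_2|+|t_3|)\sqrt{3/(2n)}\bigl[\sqrt{\log(1/\alpha)}+2\bigr]$. There is no substantive obstacle here; the only mildly fiddly piece is keeping track of constants to hit the specific form stated, and the particular factors $\sqrt{3/2}$ and $+2$ are artifacts of this three-way union bound. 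A direct application of Hoeffding to the iid sum $\sum_i Z_i$ using the range bound $\max_j t_j - \min_j t_j \leq \sum_j |t_j|$ would in fact give a strictly sharper constant, but the decomposition route outlined above fits the trinomial structure most transparently and is what I would present.
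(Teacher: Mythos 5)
Your argument is correct and arrives at exactly the stated bound, but it takes a genuinely different route than the paper. The paper's proof passes through the total variation distance $\delta(Q_n,Q)$ between the empirical and true trinomial laws, applies Pinsker's inequality $\delta(Q_n,Q)\le\sqrt{\tfrac12\,\mathrm{KL}(Q_n\|Q)}$, and then invokes a finite-sample concentration inequality for the multinomial KL divergence (Theorem~1.2 of Agrawal, 2020, with $k=3$), choosing $\epsilon^2=\tfrac{3}{2n}\log\bigl(\tfrac{9e^2}{2\alpha}\bigr)$ before rounding the constants. Your route instead decomposes $\widehat\mu-\mu=\sum_j t_j(\widehat p_j-p_j)$, applies two-sided Hoeffding to each binomial frequency, and takes a union bound over the three atoms, landing on the intermediate bound $(|t_1|+|t_2|+|t_3|)\sqrt{\log(6/\alpha)/(2n)}$. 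This is more elementary and self-contained, and in fact your pre-rounding constant $\log(6/\alpha)$ is smaller than the paper's $3\log(9e^2/(2\alpha))$, so the slack you absorb in relaxing to $\sqrt{3/(2n)}\bigl[\sqrt{\log(1/\alpha)}+2\bigr]$ is strictly larger than what the paper absorbs --- both approaches are deliberately loose to hit that target form. What the paper's information-theoretic route buys is a cleaner generalisation to general $k$-nomials, where the Agrawal bound scales nicely with $k-1$ while your union bound would pick up a factor of $k$ inside the logarithm; for $k=3$ there is no advantage and your argument is the more transparent one. Your closing remark --- that a single Hoeffding application to $\sum_i Z_i$ using the range $\max_j t_j-\min_j t_j\le\sum_j|t_j|$ would be sharper still --- is correct and worth keeping: it would avoid the union-bound factor entirely and replace the range by a potentially much smaller quantity.
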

Using \Cref{lem:trinomial} with $(t_1, t_2, t_3 ) = (-1/2h, 0, 1/2h)$ we get that $t(\alpha, \theta_h, \widehat \theta_1) = \sqrt{3/(2n)}(1/h)[\sqrt{\log(1/\alpha)} + 2]$ satisfies the condition \eqref{eq:m_condition}. Hence, 
\begin{equation*}
\begin{split}
 \widehat{\mathrm{CS}}_{n, \alpha}^h & = \tilde{\mathrm{CS}}_{n, \alpha}^h \pm h \\
 &= \{\theta \in \mathbb{R}| L_{n,\alpha}(\theta, \widehat \theta_1; S_2) \leq 0 \} \pm h\\
 &= \{\theta \in \mathbb{R}| P_n(m_{\theta;h} - m_{\widehat \theta_1; h})  \leq t(\alpha, \theta, \widehat \theta_1) \} \pm h \\
 &= \left\{\theta \in \mathbb{R} \Bigg| P_{n}(m_{\theta; h} - m_{\widehat{\theta}_1;h}) \leq \frac{1}{h}\sqrt{\frac{3}{2n}}\left[\sqrt{\log(1/\alpha)} + 2 \right] \right\} \pm h 
\end{split}
\end{equation*}
is a finite sample $(1 - \alpha)$ confidence set of $\theta_0$. This complete the proof of the theorem.

We now provide a proof of \Cref{lem:trinomial}.
\begin{proof}[Proof of \Cref{lem:trinomial}]
Let $p_n = P_nI(Z_i = t_1), q_n = P_nI(Z_i = t_2)$ be the sample estimates of $p, q$ respectively. Let $Q, Q_n$ be the trinomial distributions which take values $t_1, t_2, t_3$ with probabilities $p,q, 1-p-q$ and $p_n, q_n, 1 - p_n - q_n$ respectively. For any $\epsilon > 0$ we have the following, 
\begin{equation*}
    \begin{split}
      &  \mathbb{P}\left( \widehat \mu - \mu \leq (|t_1| + |t_2| + |t_3|)\epsilon \right) \\
     \geq & \mathbb{P}\left( |\widehat \mu - \mu| \leq (|t_1| + |t_2| + |t_3|)\epsilon \right) \\
    =& \mathbb{P}\left( |(p_n - p)t_1 + (q_n - q)t_2 + (p+ q - p_n - q_n)t_3 |\leq (|t_1| + |t_2| + |t_3|)\epsilon \right) \\
    \geq& \mathbb{P}(\delta(Q_n, Q) \leq \epsilon) \\
    \stackrel{(i)}{\geq}&  \mathbb{P}(\sqrt{(1/2)KL(Q_n||Q)} \leq \epsilon) \\
    = & \mathbb{P}(KL(Q_n||Q) \leq 2\epsilon^2) \\
    \stackrel{(ii)}{\geq}& 1- \exp(-2n\epsilon^2)\left(e\epsilon^2n \right)^2. 
    \end{split}
\end{equation*}
Here $\delta(Q_n, Q) = \sup\{|Q_n(A) - Q(A)|: A \mbox{ is measurable} \}$ is the total variation distance between $Q_n$ and $Q$ and $KL(Q_n, Q)$ is the Kullback-Leibler divergence between $Q_n$ and $Q$. In step-$(i)$ we use the Pinsker's inequality [\cite{reid2009generalised}] that $\delta(Q_n, Q) \leq \sqrt{(1/2) KL(Q_n, Q)}$. In step-(ii) we use Theorem-$1.2$ of \cite{agrawal2020finite} with $k = 3$. We now observe that for $\epsilon^2 = (3/(2n))\log(9e^2/(2\alpha))$ we have, 
\begin{equation*}
    \begin{split}
  \exp(-2n\epsilon^2)\left(e\epsilon^2n \right)^2 &= \left(\frac{2\alpha}{9e^2}\right)^3  \left(\frac{3e}{2} \right)^2\log^2\left( 9e^2/(2\alpha) \right)  \\
  &\stackrel{(i)}{\leq}  \left(\frac{3e}{2} \right)^2  \left(\frac{2\alpha}{9e^2}\right)^3  \left(1 - 2\frac{9e^2}{2\alpha} + \left(\frac{9e^2}{2\alpha} \right)^2 \right) \\
  &\leq \left(\frac{3e}{2} \right)^2  \frac{4\alpha}{9e^2} \\
  & \leq \alpha. 
    \end{split}
\end{equation*}
In step-$(i)$ we use $\log(1+x) <x $ for $x > 0$. Therefore we see that with probability at-least $ 1 - \alpha$, 
\begin{equation*}
    \begin{split}
 \widehat \mu - \mu & \leq (|t_1| + |t_2| + |t_3|)\sqrt{\frac{3}{2n}}\sqrt{\log\left(\frac{9e^2}{2 \alpha} \right)} \\
 &\leq (|t_1| + |t_2| + |t_3|)\sqrt{\frac{3}{2n}}\left[\sqrt{\log(1/\alpha)} + \sqrt{\log\left(\frac{9e^2}{2 } \right)} \right] \\
&\leq (|t_1| + |t_2| + |t_3|)\sqrt{\frac{3}{2n}}\left[\sqrt{\log(1/\alpha)} + 2 \right].
    \end{split} 
\end{equation*}
This completes the proof of the lemma. 
\end{proof}

\section{Proof of \Cref{thm:m_estimator_width}}
\label{app:proof_m_width}
In this proof we redefine $m_{\theta;h}(X) = -\textbf{1}\{\theta - h < X \leq \theta+ h\}$. Thus the confidence set $\widehat{\mathrm{CS}}_{n,\alpha}^h$ returned by the algorithm becomes, 
\[
 \widehat{\mathrm{CS}}_{n, \alpha}^h =  \left\{\theta \in \mathbb{R} \Bigg| P_{n}(m_{\theta; h} - m_{\widehat{\theta}_1;h}) \leq \sqrt{\frac{6}{n}}\left[\sqrt{\log(1/\alpha)} + 2 \right] \right\} \pm h = \tilde{\mathrm{CS}}_{n,\alpha}^h \pm h. 
 \]
The confidence set $\tilde{\mathrm{CS}}_{n,\alpha}^h$ is contained $P-$almost surely in the following sets,
\begin{equation*}
    \begin{split}
   \tilde{\mathrm{CS}}_{n,\alpha}^h =&  \left\{\theta \in \mathbb{R} \Bigg| P_{n}(m_{\theta; h} - m_{\widehat{\theta}_1;h}) \leq \sqrt{\frac{6}{n}}\left[\sqrt{\log(1/\alpha)} + 2 \right] \right\}  \\
   \subseteq & \left\{\theta \in \mathbb{R} \Bigg| P_n(m_{\theta; h} - m_{\theta_0;h}) \leq \sqrt{\frac{6}{n}}\left[\sqrt{\log(1/\alpha)} + 2 \right] + |P_n(m_{\widehat \theta_1;h} - m_{\theta_0;h})| \right\} \\
   \subseteq & \left\{\theta \in \mathbb{R} \Bigg| P(m_{\theta; h} - m_{\theta_0;h}) \leq -n^{-1/2}G_n(m_{\theta; h} - m_{\theta_0;h}) + \Gamma_n(\widehat \theta_1, \theta_0) \right\} \\
   \subseteq &\left\{\theta \in \mathbb{R} \Bigg| |\theta - \theta_0| \leq \left\{ \left(\frac{4C_0}{(\beta + 1)c_0} \right)^{1/\beta} + 1 \right\}h \right\} \\
   & \bigcup \left\{\theta \in \mathbb{R} \Bigg| |\theta - \theta_0| > \left\{ \left(\frac{4C_0}{(\beta + 1)c_0} \right)^{1/\beta} + 1 \right\}h; \mbox{ } P(m_{\theta; h} - m_{\theta_0;h}) \leq -n^{-1/2}G_n(m_{\theta; h} - m_{\theta_0;h}) + \Gamma_n(\widehat \theta_1, \theta_0) \right\},
    \end{split}
\end{equation*}
where $\Gamma_n(\widehat \theta_1, \theta_0)  = \sqrt{6/n}[\sqrt{\log(1/\alpha)} + 2] + |P_n(m_{\widehat \theta_1;h} - m_{\theta_0;h})|$. We observe that,
\begin{equation}
    \label{eq:pmtheta_theta0}
    P(m_{\theta;h} - m_{\theta_0;h}) = \{ F(\theta_0 + h) - F(\theta_0 - h)\} - \{F(\theta + h) - F(\theta - h)\}. 
\end{equation}
We use assumption~\ref{assump:f} to lower bound $ P(m_{\theta;h} - m_{\theta_0;h})$ for $|\theta - \theta_0| >((4C_0/((\beta + 1)c_0))^{1/\beta} + 1)h$. We note that, 
\begin{equation*}
    \begin{split}
    F(\theta_0 + h) - F(\theta_0 - h) & \geq 2 \int_{0}^h (f(\theta_0) - C_0x^{\beta})dx \\
    &= 2h\left(f(\theta_0) - \frac{C_0h^{\beta}}{\beta + 1} \right).
    \end{split}
\end{equation*}
For $((4C_0/((\beta + 1)c_0))^{1/\beta} + 1)h \leq |\theta - \theta_0| \leq h_0 - h$ we have, 
\begin{equation*}
    \begin{split}
     F(\theta + h) - F(\theta - h)  &\leq \int_{|\theta - \theta_0| - h}^{|\theta - \theta_0| + h}(f(\theta_0) - c_0x^{\beta}) dx \\
     &= f(\theta_0).2h - c_0 \frac{(|\theta - \theta_0| + h)^{\beta + 1} - (|\theta - \theta_0| - h)^{\beta + 1}}{\beta + 1} \\
     &\leq f(\theta_0).2h - c_0 \frac{(\beta + 1)|\theta - \theta_0|^{\beta}h }{\beta + 1} \\
     &= 2h\left(f(\theta_0) - (c_0/2) |\theta - \theta_0|^{\beta} \right). 
    \end{split}
\end{equation*}
Hence for $((4C_0/((\beta + 1)c_0))^{1/\beta} + 1)h \leq |\theta - \theta_0| \leq h_0 - h$ we have, 
\begin{equation*}
    \begin{split}
        P(m_{\theta;h} - m_{\theta_0;h}) =& \{ F(\theta_0 + h) - F(\theta_0 - h)\} - \{F(\theta + h) - F(\theta - h)\} \\
        \geq & 2h\left(f(\theta_0) - \frac{C_0h^{\beta}}{\beta + 1} \right) - 2h\left(f(\theta_0) - (c_0/2) |\theta - \theta_0|^{\beta} \right) \\
        = & 2h \left((c_0/2) |\theta - \theta_0|^{\beta} -  \frac{C_0h^{\beta}}{\beta + 1} \right) \\
        \stackrel{(i)}{\geq} & \frac{hc_0}{2} |\theta - \theta_0|^{\beta}. 
    \end{split}
\end{equation*}
The condition $((4C_0/((\beta + 1)c_0))^{1/\beta} + 1)h \leq |\theta - \theta_0|$ implies that $C_0h^{\beta}/(\beta + 1) \leq (c_0/4)|\theta - \theta_0|^{\beta}$ which justifies step-$(i)$. We now consider $\theta$ such that $h_0 - h < |\theta - \theta_0| \leq h_0 + h $. 
\begin{equation*}
    \begin{split}
     F(\theta + h) - F(\theta - h)  &\leq \int_{|\theta - \theta_0| - h}^{h_0} (f(\theta_0) - c_0x^{\beta}) dx + (f(\theta_0) - c_0h_0^{\beta})(  |\theta - \theta_0| + h - h_0) \\
     &= f(\theta_0)2h - c_0\left\{\frac{h_0^{\beta + 1} - (|\theta - \theta_0| - h)^{\beta + 1}}{\beta + 1} + h_0^{\beta}(  |\theta - \theta_0| + h - h_0) \right\} \\
     &\leq f(\theta_0)2h - c_0\left\{(|\theta - \theta_0| - h)^{\beta}(h_0 - |\theta - \theta_0| + h) + h_0^{\beta}(  |\theta - \theta_0| + h - h_0) \right\} \\
     & \leq 2h \left(f(\theta_0) - c_0(|\theta - \theta_0| - h)^{\beta} \right) \\
     & \leq 2h \left(f(\theta_0) - c_0(h_0 - 2h)^{\beta} \right). 
    \end{split}
\end{equation*}
Moreover if $|\theta - \theta_0| > h_0 + h$ then, 
\[
 F(\theta + h) - F(\theta - h) \leq 2h\left(f(\theta_0) - c_0h_0 ^{\beta} \right). 
\]
Thus combining the above two inequalities we have for any $\theta$ such that $|\theta - \theta_0| > h_0 - h$, 
\begin{equation*}
    \begin{split}
        P(m_{\theta;h} - m_{\theta_0;h}) =& \{ F(\theta_0 + h) - F(\theta_0 - h)\} - \{F(\theta + h) - F(\theta - h)\} \\
        \geq & 2h\left(f(\theta_0) - \frac{C_0h^{\beta}}{\beta + 1} \right) - 2h \left(f(\theta_0) - c_0(h_0 - 2h)^{\beta} \right) \\
        = &2h\left( c_0(h_0 - 2h)^{\beta} -  \frac{C_0h^{\beta}}{\beta + 1} \right). 
    \end{split}
\end{equation*}
These results imply that $P-$almost surely the confidence set $\tilde{\mathrm{CS}}_{n,\alpha}^h$ is contained in, 
\begin{equation*}
    \begin{split}
    \tilde{\mathrm{CS}}_{n,\alpha}^h  \subseteq &  \left\{\theta \in \mathbb{R} \Bigg| |\theta - \theta_0| \leq \left\{ \left(\frac{4C_0}{(\beta + 1)c_0} \right)^{1/\beta} + 1 \right\}h \right\} \\
   & \bigcup \left\{\theta \in \mathbb{R} \Bigg| |\theta - \theta_0| > \left\{ \left(\frac{4C_0}{(\beta + 1)c_0} \right)^{1/\beta} + 1 \right\}h; \mbox{ } P(m_{\theta; h} - m_{\theta_0;h}) \leq -n^{-1/2}G_n(m_{\theta; h} - m_{\theta_0;h}) + \Gamma_n(\widehat \theta_1, \theta_0) \right\} \\
    \subseteq & \left\{\theta \in \mathbb{R} \Bigg| |\theta - \theta_0| \leq \left\{ \left(\frac{4C_0}{(\beta + 1)c_0} \right)^{1/\beta} + 1 \right\}h \right\} \\
    & \bigcup \left\{\theta \in \mathbb{R} \Bigg|   \left\{ \left(\frac{4C_0}{(\beta + 1)c_0} \right)^{1/\beta} + 1 \right\}h < |\theta - \theta_0| \leq h_0 - h; \mbox{ } \frac{hc_0}{2}|\theta - \theta_0|^{\beta} \leq -n^{-1/2}G_n(m_{\theta; h} - m_{\theta_0;h}) + \Gamma_n(\widehat \theta_1, \theta_0) \right\} \\
    & \bigcup \left\{\theta \in \mathbb{R} \Bigg|  |\theta - \theta_0| > h_0 - h; \mbox{ } 2h\left( c_0(h_0 - 2h)^{\beta} -  \frac{C_0h^{\beta}}{\beta + 1} \right) \leq -n^{-1/2}G_n(m_{\theta; h} - m_{\theta_0;h}) + \Gamma_n(\widehat \theta_1, \theta_0) \right\} \\
    \coloneqq & \mathbf{I} \cup \mathbf{II} \cup \mathbf{III}. 
    \end{split}
\end{equation*}
We now derive a bound for $\mathbb{E}[\sup_{|\theta - \theta_0| < \delta}|G_n(m_{\theta;h} - m_{\theta_0;h})|] = \mathbb{E}[\sup_{m \in M_{\theta_0, \delta, h}}|G_nm|] $ where $M_{\theta_0, \delta, h} = \{m_{\theta; h} - m_{\theta_0;h}| |\theta - \theta_0| < \delta\} $ and $0 < \delta < h$. $M_{\theta_0, \delta, h}$ is equipped with envelope function $F = 1$. Moreover for $m = m_{\theta; h} - m_{\theta_0;h} \in M_{\theta_0, \delta, h}$ we have, 
\begin{equation}
\label{eq:m2_bound}
    \begin{split}
        \mathbb{E}[m^2] =& \mathbb{E}(m_{\theta; h} - m_{\theta_0;h})^2 \\
        =& F[[\theta - h, \theta + h] \cap [\theta_0 - h, \theta_0 + h]^c] + F[[\theta - h, \theta + h]^c \cap [\theta_0 - h, \theta_0 + h]] \\
        \leq & \int_{\theta_0+h}^{\theta_0 + h + \delta} (f(\theta_0) - c_0x^{\beta})dx  + \int_{\theta_0-h}^{\theta_0 - h + \delta} (f(\theta_0) - c_0x^{\beta})dx \\
        = & f(\theta_0)\delta - c_0 \frac{(\theta_0 + h+ \delta)^{\beta + 1} - (\theta_0 + h)^{\beta + 1}}{\beta + 1} + f(\theta_0)\delta - c_0 \frac{(\theta_0 - h+ \delta)^{\beta + 1} - (\theta_0 - h)^{\beta + 1}}{\beta + 1} \\
        \leq & \delta\left(f(\theta_0) - c_0(\theta_0 + h)^{\beta} \right) + \delta\left(f(\theta_0) - c_0(\theta_0 - h)^{\beta} \right) \\
        \leq & 2f(\theta_0)\delta \\
        \coloneqq& C_1 \delta. 
    \end{split}
\end{equation}
Thus we see for any $m \in M_{\theta_0, \delta, h}$, $\mathbb{E}[m^2] \leq t^2\mathbb{E}[F^2] = t^2.1$ where $t = \sqrt{C_1\delta}$. It can be easily seen that for any $\varepsilon >0$, the $\varepsilon-$ bracketing covering number $\mathcal{N}_{[]}(\varepsilon, M_{\theta_0, \delta, h}, |\cdot|)  \leq 3$. Thus the bracketing entropy integral is, 
\[
J_{[]}(\delta, M_{\theta_0, \delta, h}, |\cdot|) = \int_{0}^{\delta} \sqrt{1 + \log(\mathcal{N}_{[]}(\varepsilon, M_{\theta_0, \delta, h}, |\cdot|))} d\varepsilon \leq \sqrt{1 + \log3} \delta \coloneqq C_2 \delta. 
\]
Using Theorem-$2.14.17$ of \cite{wellner2013weak} we have, 
\begin{equation}
\label{eq:gn_bound_local}
    \begin{split}
    \mathbb{E}[\sup_{m \in M_{\theta_0, \delta, h}}|G_nm|]  \leq & J_{[]}(t, M_{\theta_0, \delta, h}, |\cdot|) \left(1 + \frac{J_{[]}(t, M_{\theta_0, \delta, h}, |\cdot|)}{t^2 \sqrt{n}} \right)  \\
    \leq & C_2t \left(1 + \frac{C_2}{t \sqrt{n}}
    \right) \\
    \leq & C_2\sqrt{C_1\delta} + \frac{C_2^2}{\sqrt{n}} \\
    \coloneqq & \phi_n(\delta). 
    \end{split}
\end{equation}
We note that $\phi_n(x)/x^q$ is a non-increasing function of $x$ for any $q > 1/2$. Following the above steps, one can also show that, 
\begin{equation}
\label{eq:gn_bound_global}
    \begin{split}
        \mathbb{E}[\sup_{\theta \in \mathbb{R}}|G_n(m_{\theta;h} - m_{\theta_0;h})|] \leq C_2 + \frac{C_2^2}{\sqrt{n}} \leq C_3 \mbox{ say}.
    \end{split}
\end{equation}
The above holds because of the fact that $\mathbb{E}[(m_{\theta;h} - m_{\theta_0;h})^2] \leq 1$. Hence instead of $t = \sqrt{C_1\delta}$ we use $t = 1$ in the earlier proof to get the result. We now obtain a concentration inequality for $\Gamma_n(\widehat \theta_1, \theta_0)$ i.e. we find $\gamma_{n,\epsilon} > 0$ such that $\mathbb{P}(\Gamma_n(\widehat \theta_1, \theta_0) > \gamma_{n,\epsilon})  \leq \epsilon/2$. Consider the following definition of $\tilde{\gamma}_{n,\epsilon}$, 
\[
\tilde \gamma_{n,\epsilon} = \sqrt{\frac{6}{n}}\left[\sqrt{\log(1/\alpha)} + \sqrt{\log(4/\epsilon)} + 4 \right] + |P(m_{\widehat \theta_1;h} - m_{\theta_0;h})|
\]
We denote by $\mathbb{P}^1$ the conditional probability $\mathbb{P}(\cdot |S_1)$. We can bound $\mathbb{P}^1(\Gamma_n(\widehat \theta_1, \theta_0) > \tilde \gamma_{n,\epsilon})$ as follows, 
\begin{equation*}
    \begin{split}
\mathbb{P}^1(\Gamma_n(\widehat \theta_1, \theta_0) > \tilde \gamma_{n,\epsilon}) =& \mathbb{P}^1\left(\sqrt{\frac{6}{n}}\left[\sqrt{\log(1/\alpha)} + 2 \right] + |P_n(m_{\widehat \theta_1;h} - m_{\theta_0;h})| > \gamma_{n,\epsilon} \right) \\
\leq & \mathbb{P}^1\left( |(P_n - P)(m_{\widehat \theta_1;h} - m_{\theta_0;h})| > \tilde \gamma_{n,\epsilon} - \sqrt{\frac{6}{n}}\left[\sqrt{\log(1/\alpha)} + 2 \right] - |P(m_{\widehat \theta_1;h} - m_{\theta_0;h})| \right)  \\
= & \mathbb{P}^1 \left( |(P_n - P)(m_{\widehat \theta_1;h} - m_{\theta_0;h})| >  \sqrt{\frac{6}{n}}\left[\sqrt{\log(4/\epsilon)} + 2 \right]  \right) \\
\stackrel{(i)}{\leq} &  \epsilon/4.  
    \end{split}
\end{equation*}
The step-$(i)$ follows from \Cref{lem:trinomial}. From \eqref{eq:m2_bound} we know that if $|\widehat \theta_1 - \theta_0| \leq h$ then, 
\begin{equation*}
    \begin{split}
  & |P(m_{\widehat \theta_1;h} - m_{\theta_0;h})|\\  
  =& | F(\widehat \theta_1 + h) - F(\widehat \theta_1 - h) - F(\theta_0 + h) + F(\theta_0 - h) |\\
  \leq & |F(\widehat \theta_1 + h) - F(\theta_0 + h)| + | F(\widehat \theta_1 - h) -  F(\theta_0 - h)| \\
  \leq & 2f(\theta_0)|\widehat \theta_1 - \theta_0|
    \end{split}
\end{equation*}
We know that with probability greater than or equal to $1 - \epsilon/4$, $|\widehat \theta_1 - \theta_0| \leq c_{\epsilon}\tilde s_n$. Thus with probability at-least $1 - \epsilon/4$, 
\begin{equation*}
    \begin{split}
        \tilde \gamma_{n,\epsilon} =& \sqrt{\frac{6}{n}}\left[\sqrt{\log(1/\alpha)} + \sqrt{\log(4/\epsilon)} + 4 \right] + |P(m_{\widehat \theta_1;h} - m_{\theta_0;h})| \\
        = & \sqrt{\frac{6}{n}}\left[\sqrt{\log(1/\alpha)} + \sqrt{\log(4/\epsilon)} + 4 \right] + |P(m_{\widehat \theta_1;h} - m_{\theta_0;h})|\textbf{1}\{|\widehat \theta_1 - \theta_0| \leq h\} + |P(m_{\widehat \theta_1;h} - m_{\theta_0;h})|\textbf{1}\{|\widehat \theta_1 - \theta_0| > h\} \\
        \leq & \sqrt{\frac{6}{n}}\left[\sqrt{\log(1/\alpha)} + \sqrt{\log(4/\epsilon)} + 4 \right] + 2f(\theta_0)|\widehat \theta_1 - \theta_0|\textbf{1}\{|\widehat \theta_1 - \theta_0| \leq h\} + |P(m_{\widehat \theta_1;h} - m_{\theta_0;h})|\textbf{1}\{|\widehat \theta_1 - \theta_0| > h\} \\
       \leq & \sqrt{\frac{6}{n}}\left[\sqrt{\log(1/\alpha)} + \sqrt{\log(4/\epsilon)} + 4 \right] + 2f(\theta_0)c_{\epsilon}\tilde s_n\textbf{1}\{|\widehat \theta_1 - \theta_0| \leq h\} + |P(m_{\widehat \theta_1;h} - m_{\theta_0;h})|\textbf{1}\{|\widehat \theta_1 - \theta_0| > h\} \\ 
       \leq & C_{\epsilon}s_n + |P(m_{\widehat \theta_1;h} - m_{\theta_0;h})|\textbf{1}\{|\widehat \theta_1 - \theta_0| > h\} \quad \mbox{for a suitable }C_{\epsilon}, s_n > 0 \\
       \coloneqq & \gamma_{n, \epsilon}. 
    \end{split}
\end{equation*}
Combining the two statements we obtain that, 
\begin{equation}
    \label{eq:gamma_n_bound}
    \mathbb{P}(\Gamma_n(\widehat \theta_1, \theta_0) \leq \gamma_{n,\epsilon}) \geq 2(1 - (\epsilon/4)) - 1 \geq 1 - (\epsilon/2). 
\end{equation}

Let us first analyze $\mathbf{III}$. We show that $\mathbb{P}(|\mathbf{III}| \neq \phi) \leq 2\epsilon/3$ under the assumption that,
\begin{equation}
    \label{eq:asump_epsilon}
    h\left( c_0(h_0 - 2h)^{\beta} -  \frac{C_0h^{\beta}}{\beta + 1} \right) > \max\left\{\gamma_{n,\epsilon}, \frac{6C_3}{\sqrt{n}\epsilon} \right\}.
\end{equation}
We observe that, 
\begin{equation*}
    \begin{split}
      &\mathbb{P}^1 \left(\exists \theta \in \mathbb{R} \Bigg|  |\theta - \theta_0| > h_0 - h; \mbox{ } 2h\left( c_0(h_0 - 2h)^{\beta} -  \frac{C_0h^{\beta}}{\beta + 1} \right) \leq -n^{-1/2}G_n(m_{\theta; h} - m_{\theta_0;h}) + \Gamma_n(\widehat \theta_1, \theta_0) \right) \\
   \leq  &\mathbb{P}^1 \left(\exists \theta \in \mathbb{R} \Bigg|  |\theta - \theta_0| > h_0 - h; \mbox{ } h\left( c_0(h_0 - 2h)^{\beta} -  \frac{C_0h^{\beta}}{\beta + 1} \right) \leq -n^{-1/2}G_n(m_{\theta; h} - m_{\theta_0;h}) \right) \\
   & + \mathbb{P}^1 \left(\exists \theta \in \mathbb{R} \Bigg|  |\theta - \theta_0| > h_0 - h; \mbox{ } h\left( c_0(h_0 - 2h)^{\beta} -  \frac{C_0h^{\beta}}{\beta + 1} \right) \leq  \Gamma_n(\widehat \theta_1, \theta_0) \right).
    \end{split}
\end{equation*}
Let us handle the first term at first. We observe that, 
\begin{equation*}
    \begin{split}
 & \mathbb{P}^1 \left(\exists \theta \in \mathbb{R} \Bigg|  |\theta - \theta_0| > h_0 - h; \mbox{ } h\left( c_0(h_0 - 2h)^{\beta} -  \frac{C_0h^{\beta}}{\beta + 1} \right) \leq -n^{-1/2}G_n(m_{\theta; h} - m_{\theta_0;h}) \right) \\
\leq &   \mathbb{P}^1 \left( h\left( c_0(h_0 - 2h)^{\beta} -  \frac{C_0h^{\beta}}{\beta + 1} \right) \leq n^{-1/2} \sup_{\theta \in \mathbb{R}}|G_n(m_{\theta; h} - m_{\theta_0;h}|) \right) \\
\stackrel{\eqref{eq:asump_epsilon}}{\leq} & \mathbb{P}^1 \left(\frac{6C_3}{\sqrt{n}\epsilon} \leq n^{-1/2} \sup_{\theta \in \mathbb{R}}|G_n(m_{\theta; h} - m_{\theta_0;h}|) \right) \\
\stackrel{(i)}{\leq} & \frac{\sqrt{n}\epsilon}{6C_3} n^{-1/2}\mathbb{E}[\sup_{\theta \in \mathbb{R}}|G_n(m_{\theta; h} - m_{\theta_0;h}|] \\
\stackrel{\eqref{eq:gn_bound_global}}{\leq} & \frac{\epsilon}{6C_3} C_3 \\
=& \epsilon/6. 
    \end{split}
\end{equation*}
Here step-$(i)$ follows using Markov's inequality. Under the event $\{\gamma_{n,\epsilon} \geq \tilde \gamma_{n,\epsilon}\}$ (probability of occurence at-least $ 1- (\epsilon/4)$) we can handle the second term as follows, 
\begin{equation*}
    \begin{split}
 &  \mathbb{P}^1 \left(\exists \theta \in \mathbb{R} \Bigg|  |\theta - \theta_0| > h_0 - h; \mbox{ } h\left( c_0(h_0 - 2h)^{\beta} -  \frac{C_0h^{\beta}}{\beta + 1} \right) \leq  \Gamma_n(\widehat \theta_1, \theta_0) \right) \\
 \leq &  \mathbb{P}^1 \left(\left\{\exists \theta \in \mathbb{R} \Bigg|  |\theta - \theta_0| > h_0 - h; \mbox{ } h\left( c_0(h_0 - 2h)^{\beta} -  \frac{C_0h^{\beta}}{\beta + 1} \right) \leq  \Gamma_n(\widehat \theta_1, \theta_0)\right\} \cap \{\Gamma_n(\widehat \theta_1, \theta_0) \leq \tilde \gamma_{n,\epsilon} \} \right) \\
 &+ \mathbb{P}^1\left(\Gamma_n(\widehat \theta_1, \theta_0) > \tilde \gamma_{n,\epsilon} \right) \\
 \stackrel{\eqref{eq:asump_epsilon}}{\leq} & 0 + \epsilon/4 \\
 =&\epsilon/4. 
    \end{split}
\end{equation*}
Taking intersection of this event with $\{\gamma_{n,\epsilon} \geq \tilde \gamma_{n,\epsilon}\}$ we have, 
\[
\mathbb{P} \left(\exists \theta \in \mathbb{R} \Bigg|  |\theta - \theta_0| > h_0 - h; \mbox{ } h\left( c_0(h_0 - 2h)^{\beta} -  \frac{C_0h^{\beta}}{\beta + 1} \right) \leq  \Gamma_n(\widehat \theta_1, \theta_0) \right) \leq \epsilon/2. 
\]
Combining the bounds for the first and second term we get, 
\begin{equation*}
    \begin{split}
   & \mathbb{P}(|\mathbf{III}| \neq \phi) \\
   \leq & \mathbb{P} \left(\exists \theta \in \mathbb{R} \Bigg|  |\theta - \theta_0| > h_0 - h; \mbox{ } h\left( c_0(h_0 - 2h)^{\beta} -  \frac{C_0h^{\beta}}{\beta + 1} \right) \leq -n^{-1/2}G_n(m_{\theta; h} - m_{\theta_0;h}) \right)  \\
   \leq & (\epsilon/6) + (\epsilon/2) \\
   =& 2\epsilon/3. 
    \end{split}
\end{equation*}
We now analyze the width of the set $\mathbf{II}$. Let $r_n$ be any value that satisfies $r_n^{-2} \phi_n((hc_0/2)^{-1/\beta} r_n^{2/\beta})\leq n^{1/2} $. We compute such an $r_n$ as follows,
\begin{equation*}
    \begin{split}
  & r_n^{-2} \phi_n((hc_0/2)^{-1/\beta} r_n^{2/\beta})\leq n^{1/2} \\
  \implies & r_n^{-2} \left\{C_2 \sqrt{C_1 (hc_0/2)^{-1/\beta} r_n^{2/\beta}} + \frac{C_2^2}{\sqrt{n}} \right\} \leq n^{1/2} \\
  \implies & r_n^{-(2\beta - 1)/\beta} C_2\sqrt{C_1 (hc_0/2)^{-1/\beta}} + \frac{r_n^{-2}C_2^2}{\sqrt{n}} \leq n^{1/2} . 
    \end{split}
\end{equation*}
Thus if we take,
\[
r_n = \max\left\{\left[2C_2\sqrt{C_1 (hc_0/2)^{-1/\beta}}\right]^{\beta/(2\beta - 1)} n^{-\beta/(4\beta - 2)} , \sqrt{2}C_2 n^{-1/2} \right\},
\]
the condition $r_n^{-2} \phi_n((hc_0/2)^{-1/\beta} r_n^{2/\beta})\leq n^{1/2} $ is satisfied. Let us define $R_M$ as follows, 
\[
R_M = 2^{M/\beta} (hc_0/2)^{-1/\beta}\{r_n^{2/\beta} +  \Gamma_n(\widehat \theta_1, \theta_0)^{1/\beta} \}.
\]
We show that for $M$ large enough, $\mathbb{P}(|\mathbf{II} \cap \{|\theta - \theta_0| > R_M\}| \neq \phi \} < \epsilon/6$. We observe that, 
\begin{equation*}
    \begin{split}
      & \mathbb{P}^1\left(\exists\theta \in \mathbb{R} \Bigg|   R_M< |\theta - \theta_0| \leq h_0 - h; \mbox{ } \frac{hc_0}{2}|\theta - \theta_0|^{\beta} \leq -n^{-1/2}G_n(m_{\theta; h} - m_{\theta_0;h}) + \Gamma_n(\widehat \theta_1, \theta_0) \right) \\
      \leq & \mathbb{P}^1\left(\exists \theta \in \mathbb{R} \Bigg|    R_M < |\theta - \theta_0| \leq h_0 - h; \mbox{ } \frac{hc_0}{4}|\theta - \theta_0|^{\beta} \leq -n^{-1/2}G_n(m_{\theta; h} - m_{\theta_0;h}) \right) \\
      &+ \mathbb{P}^1\left(\exists \theta \in \mathbb{R} \Bigg| R_M< |\theta - \theta_0| \leq h_0 - h; \mbox{ } \frac{hc_0}{4}|\theta - \theta_0|^{\beta} \leq  \Gamma_n(\widehat \theta_1, \theta_0) \right).
    \end{split}
\end{equation*}
We start by providing bound for the first term in the decomposition. We define the shells $J_i$ for $i \in \mathbb{N}$, 
\[
J_i \coloneqq \{\theta \in \mathbb{R}| 2^{i/\beta} (hc_0/2)^{-1/\beta}r_n^{2/\beta} \leq | \theta - \theta_0| \leq 2^{(i+1)/\beta}(hc_0/2)^{-1/\beta}r_n^{2/\beta} \}.
\]
We observe that, 
\begin{equation*}
    \begin{split}
      & \mathbb{P}^1\left(\exists \theta \in \mathbb{R} \Bigg|    R_M < |\theta - \theta_0| \leq h_0 - h; \mbox{ } \frac{hc_0}{4}|\theta - \theta_0|^{\beta} \leq -n^{-1/2}G_n(m_{\theta; h} - m_{\theta_0;h}) \right) \\
      \leq & \mathbb{P}^1\left(\exists \theta \in \mathbb{R} \Bigg|    2^{M/\beta} (hc_0/2)^{-1/\beta}r_n^{2/\beta} < |\theta - \theta_0| \leq h_0 - h; \mbox{ } \frac{hc_0}{4}|\theta - \theta_0|^{\beta} \leq -n^{-1/2}G_n(m_{\theta; h} - m_{\theta_0;h}) \right) \\
      \leq & \mathbb{P}^1\left(\exists \theta \in J_i \mbox{ for } i\geq M \Bigg|  \frac{hc_0}{4}|\theta - \theta_0|^{\beta} \leq n^{-1/2}|G_n(m_{\theta; h} - m_{\theta_0;h})|  \right) \\
      \leq & \sum_{i = M}^{\infty} \mathbb{P}^1\left(\exists \theta \in J_i  \Bigg|  \frac{hc_0}{4}|\theta - \theta_0|^{\beta} \leq n^{-1/2}|G_n(m_{\theta; h} - m_{\theta_0;h})|  \right) \\
      \leq & \sum_{i = M}^{\infty} \mathbb{P}^1\left(\frac{hc_0}{4} 2^i (hc_0/2)^{-1}r_n^2 \leq n^{-1/2} \sup_{\theta \in J_i} |G_n(m_{\theta; h} - m_{\theta_0;h})|  \right) \\
      \stackrel{(i)}{\leq} & 2 \sum_{i = M}^{\infty} 2^{-i}r_n^{-2} n^{-1/2}\mathbb{E}\left[\sup_{\theta \in J_i} |G_n(m_{\theta; h} - m_{\theta_0;h})| \right] \\
      \stackrel{\eqref{eq:gn_bound_local}}{\leq} & 2 \sum_{i = M}^{\infty} 2^{-i}r_n^{-2} \phi_n(2^{(i+1)/\beta}(hc_0/2)^{-1/\beta}r_n^{2/\beta})  \\
      \stackrel{(ii)}{\leq} & 2 \sum_{i = M}^{\infty} 2^{-i} 2^{(i+1)q/\beta} n^{-1/2} r_n^{-2} \phi_n((hc_0/2)^{-1/\beta}r_n^{2/\beta}) \quad \mbox{for } 1/2 < q < \beta\\
      \stackrel{(iii)}{\leq} & 2^{1 + (q/\beta)} \sum_{i = M}^{\infty} 2^{-i(\beta - q)/\beta} \\
      \leq & \epsilon/6 \quad \mbox{for large enough }M. 
    \end{split}
\end{equation*}
Step-$(i)$ uses Markov's inequality. In step-$(ii)$ we use the fact that $\phi(x)/x^q$ is a non-increasing function of $x$ for any $q > 1/2$. Step-$(iii)$ follows from the definition of $r_n^2$. Now we analyze the second term in the decomposition under the event $\{\gamma_{n,\epsilon} \geq \tilde \gamma_{n,\epsilon}\}$. 
\begin{equation*}
    \begin{split}
        & \mathbb{P}^1\left(\exists \theta \in \mathbb{R} \Bigg|    R_M < |\theta - \theta_0| \leq h_0 - h; \mbox{ } \frac{hc_0}{4}|\theta - \theta_0|^{\beta}  \leq \Gamma_n(\widehat \theta_1, \theta_0) \right) \\
        \leq & \mathbb{P}^1\left(  R_M^{\beta}(c_0/4)h \leq   \Gamma_n(\widehat \theta_1, \theta_0) \right) \\
        \leq & \mathbb{P}^1\left(   \frac{hc_0}{4}2^M (hc_0/2)^{-1}\Gamma_n(\widehat \theta_1, \theta_0) \leq   \Gamma_n(\widehat \theta_1, \theta_0) \right) \\
        \leq & \mathbb{P}^1\left(  2^{M-1} \leq   1\right) \\
         = & 0. 
    \end{split}
\end{equation*}
Combining the above two bounds we have the following for large enough $M > 1$, 
\begin{equation*}
    \begin{split}
   & \mathbb{P}(|\mathbf{II} \cap \{|\theta - \theta_0| > R_M| \neq \phi \} \\
  \leq & \mathbb{P}\left(\exists\theta \in \mathbb{R} \Bigg|    R_M < |\theta - \theta_0| \leq h_0 - h; \mbox{ } \frac{hc_0}{2}|\theta - \theta_0|^{\beta} \leq -n^{-1/2}G_n(m_{\theta; h} - m_{\theta_0;h}) + \Gamma_n(\widehat \theta_1, \theta_0) \right) \\
  \leq &\epsilon/6. 
    \end{split}
\end{equation*}
Therefore with probability at-least $ 1- (\epsilon/6)$, $\mathbf{II} \subseteq \{\theta| |\theta - \theta_0| \leq R_M\}$. Combining this with the result we have obtained for the set $\mathbf{III}$ we can say that with probability at-least $1 - \epsilon$, 
\begin{equation*}
    \begin{split}
         \tilde{\mathrm{CS}}_{n,\alpha}^h  \subseteq &  \left\{\theta \in \mathbb{R} \Bigg| |\theta - \theta_0| \leq \left\{ \left(\frac{4C_0}{(\beta + 1)c_0} \right)^{1/\beta} + 1 \right\}h \right\}  \bigcup \mathbf{II} \bigcup \mathbf{III} \\
         \subseteq &  \left\{\theta \in \mathbb{R} \Bigg| |\theta - \theta_0| \leq \left\{ \left(\frac{4C_0}{(\beta + 1)c_0} \right)^{1/\beta} + 1 \right\}h \right\}  \bigcup \{\theta| |\theta - \theta_0| \leq R_M\}. 
    \end{split}
\end{equation*}
Therefore with probability at-least $ 1 - \epsilon$ we can bound the width of the confidence set $\widehat{\mathrm{CS}}^h_{n,\alpha}$ as follows, 
\begin{equation*}
    \begin{split}
       & \mbox{Width}\left( \widehat{\mathrm{CS}}^h_{n,\alpha} \right) \\
      \leq &  \mbox{Width}\left( \tilde{\mathrm{CS}}^h_{n,\alpha} \right) + 2h \\
      \leq & \left\{2 \left(\frac{4C_0}{(\beta + 1)c_0} \right)^{1/\beta} + 4 \right\}h + 2 R_M \\
      \leq &\left\{2 \left(\frac{4C_0}{(\beta + 1)c_0} \right)^{1/\beta} + 4 \right\}h + \mathcal{C} \left(\frac{2}{hc_0}\right)^{1/\beta} (r_n^{2/\beta} + \Gamma_n(\widehat \theta_1, \theta_0)^{1/\beta}) \\
      \leq & \left\{ 2\left(\frac{4C_0}{(\beta + 1)c_0} \right)^{1/\beta} + 4 \right\}h \\
      & + \mathcal{C} \left(\frac{2}{hc_0}\right)^{1/\beta}  \max\left\{\left[2C_2\sqrt{C_1 (hc_0/2)^{-1/\beta}}\right]^{2/(2\beta - 1)} n^{-1/(2\beta - 1)} , (\sqrt{2}C_2)^{2/\beta} n^{-\beta} \right\}  \\
      & + \mathcal{C} \left(\frac{2}{hc_0}\right)^{1/\beta} \left\{C_{\epsilon}s_n + |P(m_{\widehat \theta_1;h} - m_{\theta_0;h})|\textbf{1}\{|\widehat \theta_1 - \theta_0| > h\} \right\} ^{1/\beta}. 
    \end{split} 
\end{equation*}
In the above computation we use $\mathcal{C}$ to denote the constant $2^{1 + (M/\beta)}$. 

In \Cref{thm:m_estimator_width} we have $m = 2n$ i.e.\ the sets $S_1$ and $S_2$ are of the same cardinality. We know that the minimax rate of convergence of the estimator $\widehat \theta_1$ under assumption~\ref{assump:f} is $n^{1/(1 + 2\beta)}$. See for instance \cite{arias2022estimation}, \cite{dasgupta2014optimal}. In particular from Theorem-$1$ in \cite{arias2022estimation} we know that under assumption~\ref{assump:f} there exists a constant $A >0$ (depending on the constants in assumption~\ref{assump:f}) such that with probability at-least $1 - (\epsilon/4)$ we have $|\widehat \theta_1 - \theta_0| \leq A\log(4/\epsilon)n^{-1/(1 + 2\beta)} = A_{\epsilon} n^{-1/(1 + 2\beta)}$ (say). We assume $\beta \geq 1$ for this part. This implies that with probability at-least $1 - \epsilon$ we have, 
\begin{equation*}
    \begin{split}
         & \mbox{Width}\left( \widehat{\mathrm{CS}}^h_{n,\alpha} \right) \\
      \leq &  \left\{2 \left(\frac{4C_0}{(\beta + 1)c_0} \right)^{1/\beta} + 4 \right\}h \\
      & + \mathcal{C} \left(\frac{2}{hc_0}\right)^{1/\beta}  \max\left\{\left[2C_2\sqrt{C_1 (hc_0/2)^{-1/\beta}}\right]^{2/(2\beta - 1)} n^{-1/(2\beta - 1)} , (\sqrt{2}C_2)^{2/\beta} n^{-\beta} \right\}  \\
      & + \mathcal{C} \left(\frac{2}{hc_0}\right)^{1/\beta} \left\{C_{\epsilon}s_n + |P(m_{\widehat \theta_1;h} - m_{\theta_0;h})|\textbf{1}\{|\widehat \theta_1 - \theta_0| > h\} \right\} ^{1/\beta} \\
      \leq & \left\{ 2\left(\frac{4C_0}{(\beta + 1)c_0} \right)^{1/\beta} + 4 \right\}h \\ 
      & + \mathcal{C}(2/c_0)^{1/\beta} \left[2C_2\sqrt{C_1 (c_0/2)^{-1/\beta}}\right]^{2/(2\beta - 1)} n^{-1/(2\beta - 1)} h^{-2/(2\beta -1)} + \mathcal{C}(2/c_0)^{1/\beta} (\sqrt{2}C_2)^{2/\beta} n^{-\beta}h^{-1/\beta} \\
      &+ \mathcal{C}(2/c_0)^{1/\beta}h^{-1/\beta} \left\{\sqrt{\frac{6}{n}}\left[\sqrt{\log(1/\alpha)} + \sqrt{\log(4/\epsilon)} + 4 \right] + 2f(\theta_0)A_{\epsilon}n^{-1/(1+ 2\beta)}\textbf{1}\{|\widehat \theta_1 - \theta_0| \leq h\} \right\}^{1/\beta} \\
     & + \mathcal{C}(2/c_0)^{1/\beta}h^{-1/\beta} \left\{|P(m_{\widehat \theta_1;h} - m_{\theta_0;h})|\textbf{1}\{|\widehat \theta_1 - \theta_0| > h\}\right\}^{1/\beta}.
    \end{split}
\end{equation*}
Let $\mathfrak{C} > 0$ be such that $\sqrt{6/n}[\sqrt{\log(1/\alpha)} + \sqrt{\log(4/\epsilon)} + 4 ] \leq (\mathfrak{C} - 2f(\theta_0))A_{\epsilon}n^{-1/(1+ 2\beta)}$. Using $h = (\log n)^kn^{-1/(1 + 2\beta)}$ (for any $k > 0$) we have with probability at-least $1 - \epsilon$, 
% \begin{equation*}
%     \begin{split}
%     & \mbox{Width}\left( \widehat{\mathrm{CS}}^h_{n,\alpha} \right) \\
%     \leq & \left\{ \left(\frac{4C_0}{(\beta + 1)c_0} \right)^{1/\beta} + 3 \right\}A_{\epsilon}n^{-1/(1 + 2\beta)} \\
%     &+ \mathcal{C}(2/c_0)^{1/\beta} \left[2C_2\sqrt{C_1 (c_0/2)^{-1/\beta}}\right]^{2/(2\beta - 1)}  A_{\epsilon}^{-2/(2\beta -1)} n^{-1/(1 + 2\beta)} \\
%     &+ \mathcal{C}(2/c_0)^{1/\beta} (\sqrt{2}C_2)^{2/\beta} A_{\epsilon}^{-1/\beta} n^{-\beta + (1/(\beta(1+2\beta)))}\\
%     &+ \mathcal{C}(2/c_0)^{1/\beta}A_{\epsilon}^{-1/\beta}n^{1/(\beta(1 + 2\beta))}\mathfrak{C}^{1/\beta}A_{\epsilon}^{1/\beta} n^{-1/(\beta(1 + 2\beta))}. 
%     \end{split}
% \end{equation*}
\begin{equation*}
    \begin{split}
    & \mbox{Width}\left( \widehat{\mathrm{CS}}^h_{n,\alpha} \right) \\
    \leq & \left\{ 2\left(\frac{4C_0}{(\beta + 1)c_0} \right)^{1/\beta} + 4 \right\} (\log n)^k n^{-1/(1 + 2\beta)} \\
    &+ \mathcal{C}(2/c_0)^{1/\beta} \left[2C_2\sqrt{C_1 (c_0/2)^{-1/\beta}}\right]^{2/(2\beta - 1)}  (\log n)^{-2k/(2\beta -1)} n^{-1/(1 + 2\beta)} \\
    &+ \mathcal{C}(2/c_0)^{1/\beta} (\sqrt{2}C_2)^{2/\beta} (\log n)^{-k/\beta} n^{-\beta + (1/(\beta(1+2\beta)))}\\
    &+ \mathcal{C}(2/c_0)^{1/\beta}(\log n)^{-k/\beta}n^{1/(\beta(1 + 2\beta))}\mathfrak{C}^{1/\beta}A_{\epsilon}^{1/\beta} n^{-1/(\beta(1 + 2\beta))}. 
    \end{split}
\end{equation*}
Thus with probability greater than or equal to $1 - \epsilon$ we have the following for any $k'> k$, 
\begin{equation*}
    \begin{split}
       & n^{1/(1+ 2\beta)}(\log n)^{-k'} \mbox{Width}\left( \widehat{\mathrm{CS}}^h_{n,\alpha} \right)  \\
        \leq & \left\{ 2\left(\frac{4C_0}{(\beta + 1)c_0} \right)^{1/\beta} + 4 \right\}(\log n)^{-(k' - k)} + \mathcal{C}(2/c_0)^{1/\beta} \left[2C_2\sqrt{C_1 (c_0/2)^{-1/\beta}}\right]^{2/(2\beta - 1)}  (\log n)^{-k'-(2k/(2\beta -1))} \\
        &+ \mathcal{C}(2/c_0)^{1/\beta} (\sqrt{2}C_2)^{2/\beta} (\log n)^{-k'-(k/\beta)} n^{-\beta + (1/(\beta(1+2\beta))) + (1/(1+ 2\beta))} \\
        &+ \mathcal{C}(2/c_0)^{1/\beta}\mathfrak{C}^{1/\beta}A_{\epsilon}^{1/\beta}(\log n)^{-k'-(k/\beta)}. 
    \end{split}
\end{equation*}
It can be checked that for $\beta \geq 1$, the exponent $-\beta + (1/(\beta(1+2\beta))) + (1/(1+ 2\beta))$ stays negative. Thus for appropriate choice of $h$ see that for any $k' > 0$ $n^{1/(1+ 2\beta)}(\log n)^{-k'} \mbox{Width}( \widehat{\mathrm{CS}}^h_{n,\alpha} ) = O_P(1)$ as $n \rightarrow \infty$. In fact the proof can be easily modified to get the following generalisation. Let $\ell(n)$ be an increasing function of $n$ such that $\ell(n) \rightarrow \infty$ as $n \rightarrow \infty$ and $\ell(n) = o(n^{\tau})$ as $n \rightarrow \infty$ for any $\tau > 0$. Then setting $h = n^{-1/(1 + 2\beta)}\ell(n)^{1/2}$ we have, 
\[
n^{1/(1+2\beta)}\ell(n)^{-1}\mbox{Width}\left( \widehat{\mathrm{CS}}^h_{n,\alpha} \right)  = O_P(1) \quad \mbox{as} \quad n \rightarrow \infty. 
\]
This completes the proof of the theorem. 

\section{Proof of \Cref{thm:ada_m_estimation}}
\label{app:ada_M_estimation}
We follow the same notations as in the proof of \Cref{thm:m_estimation}. We recall that if $\theta_0 = \mbox{arg max}_x f(x)$ and $\theta_h = \mbox{arg max}_y f*g_h(y) = \mbox{arg min}_y Pm_{y;h}$ then, 
\[
\theta_0 \in (\theta_h - h, \theta_h + h). 
\]
Thus if we can compute a finite sample valid $(1 - \alpha)$ confidence set $\tilde{\mathrm{CI}}_{n, \alpha}^h$ of $\theta_h$, then $\tilde{\mathrm{CI}}_{n, \alpha}^h \pm h$ will be a finite sample valid $(1 - \alpha)$ confidence set of $\theta_0$. We use the famous DKW inequality [\cite{massart1990tight}] for this purpose, 
\[
\mathbb{P}\left(\sup_{x \in \mathbb{R}}|P_n((-\infty, x]) - P((-\infty, x])| \leq \sqrt{\frac{\log(2/\alpha)}{2n}} \right) \geq 1 - \alpha. 
\]
This implies that if $\mathcal{I} = \{ (c, d]: -\infty \leq c \leq d \leq \infty \}$ then, 
\begin{equation}
    \label{eq:simul_DKW}
    \mathbb{P}\left(\sup_{I \in \mathcal{I}}|P_n(I) - P(I)| \leq \sqrt{\frac{2\log(2/\alpha)}{n}} \right) \geq 1 - \alpha.
\end{equation}
% Here we use Lemma-$6.3.2$ of \cite{reiss2012approximate} to obtain a simultaneously valid $(1 - \alpha)$ confidence set for all $\theta_h$ ($h > 0$), 
% \[
% \mathbb{P}\left\{\sup_{I \in \mathcal{I}} \frac{\sqrt{n} |P_n(I) - P(I)|}{\max\{ \sigma(I), \sqrt{\log(n+2)/ n} \}} \geq \epsilon \right\} \leq (n + 2)^2 \exp\left[-\epsilon \rho + \frac{3}{4} \rho^2 + \frac{13}{2} \right] \mbox{ for } \epsilon, \rho > 0,
% \]
% where $\mathcal{I}$ is the set of all intervals on $\mathbb{R}$ and $\sigma(I) = \sqrt{P(I)(1 - P(I))} \leq 1/2$. In the above result the notations $P_n(I), P(I)$ are used to denote $P_n(\textbf{1}\{X \in I\}), P(\textbf{1}\{X \in I\})$ respectively. We set $\rho = \sqrt{\log(n + 2)}$, $\epsilon = (11/4)\sqrt{\log(n+2)} + (\log(1/\alpha) + (13/2)) /\sqrt{\log(n+2)}$ in the above bound,
% \begin{equation}
%     \label{eq:simul_reiss}
%     \mathbb{P}\left\{\sup_{I \in \mathcal{I}}  |P_n(I) - P(I)| < \frac{11}{8} \sqrt{\frac{\log(n+2)}{n}} + \frac{(\log(1/\alpha)/2) + (13/4)}{\sqrt{n \log(n+2)}}\right\} \geq 1 - \alpha. 
% \end{equation}
This implies that, 
% \begin{equation}
%     \label{eq:sumul_reiss_2}
%     \begin{split}
%         &\mathbb{P}\left\{(P_n - P)(m_{ \theta_h;h} - m_{\widehat \theta_1; h}) \leq \frac{1}{2h} \left[\frac{11}{4}\sqrt{\frac{\log(n+2)}{n}} + \frac{\log(1/\alpha) + (13/2)}{\sqrt{n \log(n+2)}} \right] \mbox{ for all } h >0 \right\} \\
%         \geq & \mathbb{P}\Bigg\{\max\{|(P_n - P)(\left(\theta_h - h, \theta_h + h \right])|, |(P_n - P)((\widehat \theta_1 - h, \widehat \theta_1 + h ])| \}  \\
%         &  \leq \frac{1}{2} \left[\frac{11}{4}\sqrt{\frac{\log(n+2)}{n}} + \frac{\log(1/\alpha) + (13/2)}{\sqrt{n \log(n+2)}} \right] \mbox{ for all } h >0 \Bigg\} \\
%        \geq &  \mathbb{P}\left\{\sup_{I \in \mathcal{I}}  |P_n(I) - P(I)| < \frac{11}{8} \sqrt{\frac{\log(n+2)}{n}} + \frac{(\log(1/\alpha)/2) + (13/4)}{\sqrt{n \log(n+2)}}\right\} \\
%        \stackrel{\eqref{eq:simul_reiss}}{\geq} & 1 - \alpha. 
%     \end{split}
% \end{equation}
\begin{equation}
    \label{eq:sumul_reiss_2}
    \begin{split}
        &\mathbb{P}\left\{(P_n - P)(m_{ \theta_h;h} - m_{\widehat \theta_1; h}) \leq \frac{2}{2h} \sqrt{\frac{2\log(2/\alpha)}{n}}  \mbox{ for all } h >0 \right\} \\
        \geq & \mathbb{P}\Bigg\{\max\{|(P_n - P)(\left(\theta_h - h, \theta_h + h \right])|, |(P_n - P)((\widehat \theta_1 - h, \widehat \theta_1 + h ])| \}  \\
        &  \leq\sqrt{\frac{2\log(2/\alpha)}{n}}  \mbox{ for all } h >0 \Bigg\} \\
       \geq &  \mathbb{P}\left\{\sup_{I \in \mathcal{I}}  |P_n(I) - P(I)| \leq \sqrt{\frac{2\log(2/\alpha)}{n}} \right\} \\
       \stackrel{\eqref{eq:simul_DKW}}{\geq} & 1 - \alpha. 
    \end{split}
\end{equation}
In other words $\mathbb{P}(\mathcal{A}_n) \geq 1 - \alpha$ where, 
\[
\mathcal{A}_n \coloneqq \left\{ (P_n - P)(m_{ \theta_h;h} - m_{\widehat \theta_1; h}) \leq \frac{1}{h} \sqrt{\frac{2\log(2/\alpha)}{n}}  \mbox{ for all } h >0 \right\}. 
\]
We let, 
\[
\tilde{\mathrm{CS}}_{n, \alpha}^h  = \left\{\theta \in \mathbb{R} \Bigg| P_{n}(m_{\theta; h} - m_{\widehat \theta_1;h}) \leq \frac{1}{h} \sqrt{\frac{2\log(2/\alpha)}{n}}  \right\}. 
\]
We have the following bound on mis-coverage,
\begin{equation*}
    \begin{split}
       & \mathbb{P}\left( \exists h > 0 \ni \theta_h \notin \tilde{\mathrm{CS}}_{n, \alpha}^h \right) \\
        = &  \mathbb{P}\left( \exists h > 0 \ni  P_{n}(m_{\theta_h; h} - m_{\widehat \theta_1;h}) > \frac{1}{h} \sqrt{\frac{2\log(2/\alpha)}{n}}   \right) \\
        \leq & \mathbb{P}\left(\left\{ \exists h > 0 \ni  P_{n}(m_{\theta_h; h} - m_{\widehat \theta_1;h}) > \frac{1}{h} \sqrt{\frac{2\log(2/\alpha)}{n}}  \right\} \bigcap \mathcal{A}_n \right) + \mathbb{P}(\mathcal{A}_n^c) \\
        \leq & \mathbb{P} \left( \exists h > 0 \ni P(m_{\theta_h; h} - m_{\widehat \theta_1;h}) > 0\right) + \mathbb{P}(\mathcal{A}_n^c) \\
        \stackrel{(i)}{=}& 0 + \mathbb{P}(\mathcal{A}_n^c)  \\
        \stackrel{\eqref{eq:sumul_reiss_2}}{\leq}&  \alpha.
    \end{split}
\end{equation*}
% \begin{equation*}
%     \begin{split}
%       & \mathbb{P}\left(\theta_h \in \tilde{\mathrm{CS}}_{n, \alpha}^h \mbox{ for all } h > 0  \right)  \\
%     = & \mathbb{P} \left\{ P_{n}(m_{\widehat \theta_1; h} - m_{\theta_h;h}) \leq \frac{1}{2h} \left[\frac{11}{4}\sqrt{\frac{\log(n+2)}{n}} + \frac{\log(1/\alpha) + (13/2)}{\sqrt{n \log(n+2)}} \right] \mbox{ for all } h > 0\right\} \\
%     \stackrel{(i)}{=} &  \mathbb{P} \left\{ \left\{P_n  (m_{\widehat \theta_1; h} - m_{\theta_h;h}) -     \frac{1}{2h} \left[\frac{11}{4}\sqrt{\frac{\log(n+2)}{n}} + \frac{\log(1/\alpha) + (13/2)}{\sqrt{n \log(n+2)}} \right] \leq 0 \right\} \cap \{P (m_{\widehat \theta_1; h} - m_{\theta_h;h}) \geq 0 \} \mbox{ }\forall h > 0             \right\} \\
%     \geq & \mathbb{P}\left\{(P_n - P)(m_{ \widehat \theta_1;h} - m_{\theta_h; h}) \leq \frac{1}{2h} \left[\frac{11}{4}\sqrt{\frac{\log(n+2)}{n}} + \frac{\log(1/\alpha) + (13/2)}{\sqrt{n \log(n+2)}} \right] \mbox{ for all } h >0 \right\} \\
%     \stackrel{\eqref{eq:sumul_reiss_2}}{\geq}& 1 - \alpha. 
%     \end{split}
% \end{equation*}
The step-$(i)$ follows from the fact that $\theta_h$ minimizes $Pm_{\theta; h}$ for all $h > 0$. The above bound on mis-coverage implies that $\mathbb{P}( \theta_h \in \tilde{\mathrm{CS}}_{n, \alpha}^h \mbox{ for all } h > 0) \geq 1 - \alpha$. Using this fact, we can bound the coverage of $\widehat{\mathrm{CS}}_{n, \alpha}$ as follows, 
\begin{equation*}
    \begin{split}
        & \mathbb{P} \left(\theta_0 \in \widehat{\mathrm{CS}}_{n, \alpha} \right) \\
        \geq & \mathbb{P} \left( \theta_0 \in \widehat{\mathrm{CS}}_{n, \alpha}^h \mbox{ for all } h > 0 \right) \\
        = & \mathbb{P} \left( \theta_0 \in  \left\{\theta \in \mathbb{R} \Bigg| P_{n}(m_{\theta; h} - m_{\widehat \theta_1;h}) \leq \frac{1}{h} \sqrt{\frac{2\log(2/\alpha)}{n}}  \right\} \pm h \mbox{ for all } h > 0  \right) \\
        \geq &  \mathbb{P} \left( \theta_h \in  \left\{\theta \in \mathbb{R} \Bigg| P_{n}(m_{\theta; h} - m_{\widehat \theta_1;h}) \leq \frac{1}{h} \sqrt{\frac{2\log(2/\alpha)}{n}}  \right\}  \mbox{ for all } h > 0  \right) \\
        = & \mathbb{P}\left(\theta_h \in \tilde{\mathrm{CS}}_{n, \alpha}^h \mbox{ for all } h > 0   \right) \\
        \geq & 1 - \alpha. 
    \end{split}
\end{equation*}
This completes the proof of the theorem. 

\section{Proof of \Cref{thm:ada_m_estimator_width}}
\label{app:ada_M_width}
The proof of this theorem is very similar to that of \Cref{thm:m_estimator_width}. For simplicity we redefine $m_{\theta; h}(X) = -\textbf{1}\{ \theta - h < X \leq \theta + h \}$ in this proof. The only change from the proof of \Cref{thm:m_estimator_width} is in the definition of $\widehat \Gamma_n( \widehat \theta_1, \theta_0)^h$ (the superscript $h$ denotes the dependence on $h$), 
\[
    \widehat \Gamma_n( \widehat \theta_1, \theta_0)^h  = \sqrt{\frac{8\log(2/\alpha)}{n}}  + | P_n ( m_{\widehat \theta_1; h} - m_{\theta_0; h})|. 
\]
It can be shown that $\mathbb{P}^1 ( \widehat \Gamma_n( \widehat \theta_1, \theta_0)^h > \tilde \gamma_{n , \epsilon}^h) \leq \epsilon/4 $ where, 
\[
    \tilde \gamma_{n , \epsilon}^h = \frac{\sqrt{8\log(2/\alpha)} + \sqrt{8 \log(8/\epsilon)}}{\sqrt{n}} + | P ( m_{\widehat \theta_1; h} - m_{\theta_0; h})|. 
\]
From the minimax optimality of $\widehat \theta_1$ we know that there exists $A_{\epsilon} > 0$ such that $|\widehat \theta_1 - \theta_0| \leq A_{\epsilon} n^{-1/(1 + 2 \beta)}$  with probability at-least $1 - (\epsilon/4)$. Using this fact and the definition of $\tilde \gamma_{n , \epsilon}^h$ it can be shown that the following event holds with probability at-least $1 - (\epsilon/ 4)$, 
\begin{equation*}
    \begin{split}
        & \tilde \gamma_{n , \epsilon}^h \\
        = & \frac{\sqrt{8\log(2/\alpha)} + \sqrt{8 \log(8/\epsilon)}}{\sqrt{n}}  + | P ( m_{\widehat \theta_1; h} - m_{\theta_0; h})| \\
        \leq & \frac{\sqrt{8\log(2/\alpha)} + \sqrt{8 \log(8/\epsilon)}}{\sqrt{n}}  \\
        &+ 2f(\theta_0)A_{\epsilon}n^{-1/(1 + 2 \beta)} \textbf{1}\{ |\widehat{\theta}_1 - \theta_0| \leq h\} +  | P ( m_{\widehat \theta_1; h} - m_{\theta_0; h})|\textbf{1}\{| \widehat \theta_1 - \theta_0| > h \} \\
        \coloneqq & \gamma_{n, \epsilon}^h.   
    \end{split}
\end{equation*}
Following exactly the same steps as the proof of \Cref{thm:m_estimator_width} we can show that with probability at-least $1 - \epsilon$, 
\begin{equation*}
    \begin{split}
         & \mbox{Width}\left( \widehat{\mathrm{CS}}^h_{n,\alpha} \right) \\
      \leq &  \left\{2 \left(\frac{4C_0}{(\beta + 1)c_0} \right)^{1/\beta} + 4 \right\}h \\
      & + \mathcal{C} \left(\frac{2}{hc_0}\right)^{1/\beta}  \max\left\{\left[2C_2\sqrt{C_1 (hc_0/2)^{-1/\beta}}\right]^{2/(2\beta - 1)} n^{-1/(2\beta - 1)} , (\sqrt{2}C_2)^{2/\beta} n^{-\beta} \right\}  \\
      & + \mathcal{C} \left(\frac{2}{hc_0}\right)^{1/\beta} \left\{\gamma_{n , \epsilon}^h \right\} ^{1/\beta} \\
      \leq & \left\{ 2\left(\frac{4C_0}{(\beta + 1)c_0} \right)^{1/\beta} + 4 \right\}h \\ 
      & + \mathcal{C}(2/c_0)^{1/\beta} \left[2C_2\sqrt{C_1 (c_0/2)^{-1/\beta}}\right]^{2/(2\beta - 1)} n^{-1/(2\beta - 1)} h^{-2/(2\beta -1)} + \mathcal{C}(2/c_0)^{1/\beta} (\sqrt{2}C_2)^{2/\beta} n^{-\beta}h^{-1/\beta} \\
      &+ \mathcal{C}\left( \frac{2}{hc_0} \right)^{1/\beta} \left\{\frac{\sqrt{8\log(2/\alpha)} + \sqrt{8 \log(8/\epsilon)}}{\sqrt{n}}   + 2f(\theta_0)A_{\epsilon}n^{-1/(1+ 2\beta)}\textbf{1}\{|\widehat \theta_1 - \theta_0| \leq h\} \right\}^{1/\beta} \\
     & + \mathcal{C}(2/c_0)^{1/\beta}h^{-1/\beta} \left\{|P(m_{\widehat \theta_1;h} - m_{\theta_0;h})|\textbf{1}\{|\widehat \theta_1 - \theta_0| > h\}\right\}^{1/\beta}.
    \end{split}
\end{equation*}
Let $\mathfrak{C} > 0$ be such that,
\[
\frac{\sqrt{8\log(2/\alpha)} + \sqrt{8 \log(8/\epsilon)}}{\sqrt{n}}  \leq (\mathfrak{C} - 2f(\theta_0))A_{\epsilon}n^{-1/(1+ 2\beta)}.
\]
We define $h_{opt} \coloneqq \sqrt{\ell(n)} n^{-1/(1+ 2\beta)}$. The following event holds with probability at-least $1 - \epsilon$, 
\begin{equation*}
    \begin{split}
    & \mbox{Width}\left( \widehat{\mathrm{CS}}^h_{n,\alpha} \right) \\
    \leq & \left\{ 2\left(\frac{4C_0}{(\beta + 1)c_0} \right)^{1/\beta} + 4 \right\} \ell(n)^{1/2} n^{-1/(1 + 2\beta)} \\
    &+ \mathcal{C}(2/c_0)^{1/\beta} \left[2C_2\sqrt{C_1 (c_0/2)^{-1/\beta}}\right]^{2/(2\beta - 1)}  \ell(n)^{-1/(2\beta -1)} n^{-1/(1 + 2\beta)} \\
    &+ \mathcal{C}(2/c_0)^{1/\beta} (\sqrt{2}C_2)^{2/\beta} \ell(n)^{-1/2\beta} n^{-\beta + (1/(\beta(1+2\beta)))}\\
    &+ \mathcal{C}(2/c_0)^{1/\beta}\ell(n)^{-1/2\beta}n^{1/(\beta(1 + 2\beta))}\mathfrak{C}^{1/\beta}A_{\epsilon}^{1/\beta} n^{-1/(\beta(1 + 2\beta))}. 
    \end{split}
\end{equation*}
Thus with probability greater than or equal to $1 - \epsilon$ we have, 
\begin{equation*}
    \begin{split}
       & n^{1/(1+ 2\beta)}\ell(n)^{-1} \mbox{Width}\left( \widehat{\mathrm{CS}}^{h_{opt}}_{n,\alpha} \right)  \\
        \leq & \left\{ 2\left(\frac{4C_0}{(\beta + 1)c_0} \right)^{1/\beta} + 4 \right\}\ell(n)^{-1/2} + \mathcal{C}(2/c_0)^{1/\beta} \left[2C_2\sqrt{C_1 (c_0/2)^{-1/\beta}}\right]^{2/(2\beta - 1)}  \ell(n)^{-1-(1/(2\beta -1))} \\
        &+ \mathcal{C}(2/c_0)^{1/\beta} (\sqrt{2}C_2)^{2/\beta} \ell(n)^{-1-(1/2\beta)} n^{-\beta + (1/(\beta(1+2\beta))) + (1/(1+ 2\beta))} \\
        &+ \mathcal{C}(2/c_0)^{1/\beta}\mathfrak{C}^{1/\beta}A_{\epsilon}^{1/\beta}\ell( n)^{-1-(1/2\beta)}. 
    \end{split}
\end{equation*}
For $\beta \geq 1$, the exponent $-\beta + (1/(\beta(1+2\beta))) + (1/(1+ 2\beta))$ stays negative. Since $\mbox{Width}( \widehat{\mathrm{CS}}_{n,\alpha} ) = \min_{h \geq 0} \mbox{Width}( \widehat{\mathrm{CS}}^h_{n,\alpha} ) \leq \mbox{Width}( \widehat{\mathrm{CS}}^{h_{opt}}_{n,\alpha} )$, we can say that the following event holds with probability at-least $ 1- \epsilon$, 
\begin{equation*}
    \begin{split}
       & n^{1/(1+ 2\beta)}\ell(n)^{-1} \mbox{Width}\left( \widehat{\mathrm{CS}}_{n,\alpha} \right)  \\
        \leq & \left\{ 2\left(\frac{4C_0}{(\beta + 1)c_0} \right)^{1/\beta} + 4 \right\}\ell(n)^{-1/2} + \mathcal{C}(2/c_0)^{1/\beta} \left[2C_2\sqrt{C_1 (c_0/2)^{-1/\beta}}\right]^{2/(2\beta - 1)}  \ell(n)^{-1-(1/(2\beta -1))} \\
        &+ \mathcal{C}(2/c_0)^{1/\beta} (\sqrt{2}C_2)^{2/\beta} \ell(n)^{-1-(1/2\beta)} n^{-\beta + (1/(\beta(1+2\beta))) + (1/(1+ 2\beta))} \\
        &+ \mathcal{C}(2/c_0)^{1/\beta}\mathfrak{C}^{1/\beta}A_{\epsilon}^{1/\beta}\ell( n)^{-1-(1/2\beta)}. 
    \end{split}
\end{equation*}
This completes the proof that $n^{1/(1+ 2\beta)}\ell(n)^{-1}\mbox{Width}( \widehat{\mathrm{CS}}_{n,\alpha} )  = O_P(1) $. 

\section{Proof of \Cref{thm:ed_method_validity}}
\label{app:proof_ed_validity}
We know from Edelman's result \eqref{eq:edelman} that for all $X_i \in S_2$,
\begin{equation}
  \mathbb{P}\left(X_i - \left(\frac{2 }{\alpha} - 1 \right)| X_i - \widehat \theta_1| \leq \theta_0 \leq X_i + \left(\frac{2}{\alpha} - 1 \right) | X_i - \widehat \theta_1|  \right) \geq 1 - \alpha.    
\end{equation}
The above result holds because $\widehat \theta_1$ is independent of any $X_i \in S_2$. The above inequality can be re-stated as, 
\begin{equation}
     \label{eq:p_value_ed}
    \mathbb{P}\left( \frac{2}{1 + \left|\frac{X_i - \theta_0}{X_i - \widehat \theta_1} \right|} \leq \alpha \right) \leq \alpha \mbox{ for all } X_i \in S_2.  
\end{equation}
This shows that for all $X_i \in S_2$, $p_i(\theta_0)$ is a valid p-value under the null hypothesis that the mode of the underlying distribution is $\theta_0$. Moreover since $\{p_i(\theta_0)\}_{X_i \in S_2}$ are independent p-values, we can use any standard method of combining independent p-values and use that to obtain a valid confidence set for $\theta_0$. In \Cref{alg:proposed-conf-int_ed} we use Fisher's method of combining independent p-values. This completes the proof of the theorem. 

\section{Proof of \Cref{thm:ed_width}}
\label{app:proof_ed_width}
The proof is a simple application of the law of large numbers. We have the following equivalence relations, 
\begin{equation*}
        \begin{split}
            \theta \in  \widehat{\mathrm{CS}}_{n, \alpha}^{\mathrm{Ed}_p}& \iff -2\sum_{i: X_i \in S_2}\log(p_i(\theta)) < \chi^2( 1- \alpha, 2n)  \\
            & \iff  \frac{1}{n}  \sum_{i: X_i \in S_2} \log\left( \frac{1 + \left|\frac{X_i - \theta}{X_i - \widehat \theta_1} \right|}{2} \right) < \frac{\chi^2( 1- \alpha, 2n) }{2n}. 
        \end{split}
    \end{equation*}
As $n \rightarrow \infty$ the term in the left hand side converges almost surely to $\mathbb{E}[\log(1 + |(X - \theta)/(X - \theta_0)|) - \log(2)]$ by strong law of large numbers (we also use the fact that $\widehat \theta_1$ is a consistent estimator of $\theta_0$). The right hand side converges to $\mathbb{E}Z^2 = 1$ ($Z$ is standard normal random variable) by strong law of large numbers. Thus as $m-n, n \rightarrow \infty$, 
\[
\widehat{\mathrm{CS}}_{n, \alpha}^{\mathrm{Ed}_p} \stackrel{a.s.}{\rightarrow} \left\{\theta \Bigg| \mathbb{E}_F \left[ \log\left(1 +  \left|\frac{X - \theta}{X - \theta_0} \right| \right)\right] < 1+ \log 2\right\}.
\]
This completes the proof of the theorem. 

\section{Proof of \Cref{thm:gamma_ed_method_validity}}
 \label{app:gamma_validity}
We know from \Cref{lem:minkow_gamma} that if $X_1, \cdots, X_n$ follows a $\gamma$-unimodal distribution about mode $\theta_0$, then $\|X_1 - \theta_0\|_2^{\gamma}, \cdots, \|X_n - \theta_0\|_2^{\gamma}$ follow unimodal distribution on $\mathbb{R}$ about mode $0$. From the definition of $\widehat{\mathrm{CS}}_{n,\alpha}^{\theta_0; \gamma} $ we get, 
\[
\mathbb{P}\left(0 \in \widehat{\mathrm{CS}}_{n,\alpha}^{\theta_0; \gamma} \right) \geq 1- \alpha. 
\]
% Using \Cref{thm:ed_method_validity} on the transformed data $\{ \|X_1 - \theta_0\|_2^{\gamma}, \cdots, \|X_n - \theta_0\|_2^{\gamma}\}$ we get, 
% \begin{equation}
% \label{eq:gamma_long}
%     \begin{split}
% \mathbb{P}&\left( 0 \in \left[\|\widehat X_{(1)}^{\theta_0} - \theta_0\|_2^{\gamma} - \left(\frac{2 \ell(n)}{\alpha} - 1 \right) |\|\widehat X_{(1)}^{\theta_0} - \theta_0\|_2^{\gamma} - \|\widehat \theta_1 - \theta_0\|_2^{\gamma}|, \right. \right. \\
%    & \left. \left. \|\widehat X_{(1)}^{\theta_0} - \theta_0\|_2^{\gamma} + \left(\frac{2 \ell(n)}{\alpha} - 1 \right) |\|\widehat X_{(1)}^{\theta_0} - \theta_0\|_2^{\gamma} - \|\widehat \theta_1 - \theta_0\|_2^{\gamma}| \right] \right) \geq 1 - \alpha.        
%     \end{split}
% \end{equation}
The above coverage guarantee holds because of the validity of the algorithm $\mathcal{A}(\cdot, \cdot)$. Therefore we can bound the coverage probability as follows, 
\begin{equation*}
    \begin{split}
        &\mathbb{P}\left(\theta_0 \in \widehat{\mathrm{CS}}_{n,\alpha}^{ \gamma}  \right) \\
     = & \mathbb{P}\left(\theta_0 \in \left\{\theta \in \mathbb{R}^d\Bigg|  0 \in \widehat{\mathrm{CS}}_{n,\alpha}^{\theta; \gamma}   \right\}  \right) \\
     = & \mathbb{P}\left(0 \in \widehat{\mathrm{CS}}_{n,\alpha}^{\theta_0; \gamma} \right) \\
     \geq& 1- \alpha. 
    \end{split}
\end{equation*}
This completes the proof of the theorem. 
\end{document}